\theoremstyle{plain}
\newtheorem{thm}{Theorem}
\newtheorem{lem}[thm]{Lemma}
\newtheorem{rem}{Remark}
\newtheorem{cor}{Corollary}
\theoremstyle{definition}
\newtheorem{defn}{Definition}[section]
\theoremstyle{remark}
\newcommand{\norm}[1]{\Arrowvert #1 \Arrowvert}
\begin{document}
\title{Pathwise quantitative particle approximation of nonlinear stochastic  Fokker-Planck equations via  relative entropy}

\author{ Christian Olivera \footnote{Departamento de Matem\'atica, Universidade Estadual de Campinas, Brazil. \texttt{colivera@ime.unicamp.br}.}
 \and    Alexandre B. de Souza \footnote{Departamento de Matem\'atica, Universidade Estadual de Campinas, Brazil. \texttt{a265040@dac.unicamp.br}.} \and 
}

\date{}
\maketitle
\begin{abstract}
We derive the nonlinear stochastic Fokker-Planck equation
from  stochastic particle systems   with individual and environmental noises via relative entropy method,  with  pathwise quantitative bounds.  Moreover, we prove the existence of a unique strong solution to the associated  Fokker-Planck equation. Our proof is based on  tools from PDE analysis, stochastic analysis, functional inequalities, and  also we use the dissipation of entropy which provides some  bound on the Fisher information of the particle system. 
The approach applies to repulsive and attractive kernels.
\end{abstract}

\vspace{0.3cm} \noindent {\bf MSC2010 subject classification:} 49N90,  60H30, 60K35
\section{Introduction}

The purpose of this paper is to study the mean-field limit of a stochastic, moderately interacting particle system in order to derive the following  non-linear stochastic Fokker-Planck equation

\begin{align}\label{SPDE_Ito}
\mathrm{d}\rho_t = \frac{1}{2} \Delta \rho_t \,dt + \frac{1}{2}D^2 \rho_t (\sigma \sigma^\top)_t \, dt - \nabla \cdot( \rho_t (K\ast \rho_t)) \, dt  - \nabla \rho_t \cdot \sigma_t\, d  B_t
\end{align}
for $t \in [0,T]$, $T >0$.  Equation  (\ref{SPDE_Ito}) is considered for arbitrary dimensions $d \geq 1$.
Our contribution is to provide a rigorous microscopic derivation of the SPDE of the form (\ref{SPDE_Ito}), supplemented by  pathwise  quantitative convergence estimates   via the  relative entropy method.   The main novelty is to apply the relative entropy directly to the mollified empirical measure   in the setting of moderately interacting particle systems.

Our starting point is the following stochastic moderately interacting particle system  of $N$ indistinguishable particles on  $\mathbb{T}^d$ given by 
    
\begin{align}\label{particles}
d  X_t^{i,N} &=  \frac{1}{N}\sum_{k=1}^{N} \left(K \ast V^{N}\right)\left(X_{t}^{i,N} -X_{t}^{k,N}\right) \, dt +  \,d W_t^{i,N} + \sigma_t\, dB_t  \quad 
\end{align}
	
	\noindent where $W_{t}^{i}$ and $B_{t}$ are  independent standard $\mathbb{T}^d$-valued Brownian motions, defined on a filtered probability space $(\Omega,\mathcal{F},(\mathcal{F}_t)_{t\geq0},\mathbb{P})$, the interaction kernel $V^{N} $ depends on the number of particles  $N \in \mathbb{N} $ via the moderate interaction parameter $\beta$.
    In the interacting particle system (\ref{particles}),  $ \sigma_t\, dB_t$  represents the common environmental noise shared by all particles. Such interacting systems arise naturally in various areas of science and engineering, including statistical mechanics problems in mathematical physics, biology, 
    numerical Monte-Carlo simulations, and various other fields, see \cite{Carmona}, \cite{Carmona2}, \cite{Jabin2}, \cite{Live} and \cite{Lac}.

The microscopic \emph{empirical process} of this $N$-particle system, which is a probability measure on the ambient space $\mathbb{T}^d$,  is given as usual by \begin{equation}S_{t}^{N}\doteq\frac{1}{N} \sum_{i=1}^{N}\delta_{X_{t}^{i,N}}, \qquad t\geqslant 0, \label{eq:mutN}\end{equation} where $\delta_{a}$ is the delta Dirac measure concentrated at $a \in \mathbb{T}^d$. Then, $(S_{t}^{N})_{t\geqslant 0}$ is a  measure-valued process associated to the $\mathbb{T}^d$--valued processes $\{t\mapsto X_{t}^{i,N}\}_{i=1,...,N}$.  

The main goal of this paper is to investigate the large $N$ limit of  the dynamical process
$(S_{t}^{N})_{t\geqslant 0}$. As proven by Sznitman \cite{Sz}, the convergence of the empirical measure towards a constant random variable  $\rho_{t}$ is equivalent to the property of propagation of chaos. For that purpose, we introduce the \emph{mollified empirical measure} \[\rho_t^{N}\doteq V^{N} \ast S_t^{N} = \int_{\mathbb{T}^d} V^N(\cdot-y) \,S_t^N(dy),\] 
which is more regular than $S_t^N$.  Our results  provide  a point-wise in $\omega$ quantitative estimate of the distance between the mollified measure $\rho_t^N$ and the unique solution  $\rho_t$ of  equation \eqref{SPDE_Ito},  which   takes the following  form: 
there exists a constant $C=C_{\omega}>0$ such that for any $N\in\mathbb{N}$,
\begin{align*} 
         \sup_{t \in [0,T]}\mathcal{H}(\rho_t^N|\rho_t) \leq C(\mathcal{H}(\rho_0^N|\rho_0) + N^{-\theta})  ,
\end{align*}
where $\theta$ is an explicit positive parameter, $T>0$ is a time horizon. Then by the classical Csiszar-Kullback-Pinsker inequality, the relative entropy estimate implies the quantitative convergence in $L^{1}(\mathbb{T}^{d})$ sense.  To show that, we use tools from PDE analysis, 
stochastic analysis, functional inequalities , also we use the dissipation of entropy which provides some  bound on the Fisher information of the particle system. Our second contribution is to prove the well-posedness of the nonlinear stochastic Fokker–Planck equation  (\ref{SPDE_Ito}).

\subsection*{Related works}
The mean field limit for the first-order systems, exemplified by $(\ref{particles})$ with $\sigma =0$  has been extensively studied over the last decade, see for example \cite{Bres, Carri,Font,FournierJourdain,FHM, Tardy, Gui,Jabin_2018,Nguyen,  Pisa, Serfaty, Toma}. 

The relative entropy method to prove quantitative propagation of chaos result for McKean-Vlasov systems was first introduced  in  \cite{Jabin_2018} for general first-order systems with $W^{-1, \infty}$ kernels, including the point vortex model approximating the 2D Navier-Stokes equation. Recently much progress has been made in extending the relative entropy method to more general cases and models, especially with singular interacting kernels. Those results include  \cite{Gui} to the uniform in time propagation of chaos by using the logarithmic Sobolev inequality   for the limit density,  and \cite{Carri} for the derivation of the mean-field approximation for Landau-like equations.  In addition, in \cite{Chen}, a combination between the relative entropy and the regularised $L^{2}$-estimate in \cite{Oelschlager87} has been used to prove a propagation of chaos result for the viscous porous medium equation from a moderately interacting particle system.

We also mention the recent work  \cite{Serfaty}  where mean-field limit and propagation of chaos of McKean-Vlasov equations with singular interacting kernels has been considered with the method of modulated energy. Instead of focusing on the  joint law level as in the relative entropy method, this modulated energy method works on the empirical measure of the particle system. For more results in this approach see for example \cite{Cho}, \cite{Cho2}  and \cite{Nguyen}. A successful combination of the techniques in   \cite{Serfaty} and \cite{Jabin_2018}  made it possible to obtain a propagation of chaos result in $L^{1}$-norm for mean-field systems with logarithmic interaction potentials in arbitrary dimensions which includes the Keller-Segel system in dimension 2, see  \cite{Bres}.

Moderately interacting particle systems with regular coefficients and their trajectorial propagation of chaos were studied initially in \cite{Oelschlager84, Oelschlager85,JourdainMeleard, Meleard}.  Based on a mild formulation of the empirical measure of a moderately interacting system and semigroup theory, in  \cite{FlandoliLeimbachOlivera} recently developed a technique to approximate nonlinear PDEs by smoothed empirical measures in strong functional topologies. This technique was also applied for a PDE-ODE system related to aggregation phenomena, see \cite{FlandoliLeocata}; for non-local conservation laws, see  \cite{Simon}; for the 2d Navier-Stokes equation, see \cite{FlandoliOliveraSimon}, etc. About more advances in  moderate particle systems,  see for instance \cite{Ansgar}, \cite{Chen}, \cite{Ansgar2}, \cite{Hao} and  \cite{Simon2}.

The derivation of the  SPDE  (\ref{SPDE_Ito}) from particle systems is interesting and challenging.
  For systems with uniformly Lipschitz interaction coefficients \cite{Cogui} established conditional propagation of chaos.    The entropy method has recently been explored for 
  systems with individual and common noise, as shown in \cite{Shao} for incompressible  the Navier-Stokes equations, \cite{Chen2} for the Hegselmann-Krause model, and \cite{Niko} for mean-field systems with bounded kernels.  Additionally, we mention the result   in \cite{Luo2}, which  is  approximated the stationary solution of the stochastic 2-dimensional Navier-Stokes equation by the point vortex model  with common noise. For more results  , see \cite{Maurelli}, \cite{correa}  and \cite{Kotelenez}.

  We emphasize that compared with the works \cite{Chen},
  \cite{Chen2},  \cite{Niko}, \cite{Shao} we obtain pathwise estimates  of the relative entropy and in the aforementioned articles it is at the level of the joint law of the particle system.  In particular, the advantage of our methodology is that it yields bounds directly at the level of the trajectories of the particle system.
The precise connection between these two approaches is not yet fully understood and will be investigated in future work.

\subsection{Notations}
	For \( d \geq 1 \), let \( C^k(\mathbb{T}^d) \) denote the space of \( k \)-times continuously differentiable functions defined on the \( d \)-dimensional torus \( \mathbb{T}^d \doteq \left[-\frac{1}{2}, \frac{1}{2}\right]^d \), where \( k \in \mathbb{N} \cup \{\infty\} \). The space of probability density functions on \( \mathbb{T}^d \) is denoted by \( \mathcal{P}(\mathbb{T}^d) \).
	
	For a measure space \((X, \mathcal{M}, \mu)\) and a measurable function \( f: X \to \mathbb{R} \) (denoted by \( f \in \mathcal{M} \)), we define the duality pairing between the measure \(\mu\) and the function \(f\) as  
\[
\left\langle \mu, f \right\rangle \doteq \int_{X} f \, d\mu.
\]  
Also, for  $a \in [1,\infty)$  the Lebesgue space is given by
	$$L^a=L^a(X)=\left\{f \in \mathcal{M} \mid\|f\|_{a}\doteq \left(\int_{X}|f|^a d\mu\right)^{\frac{1}{a}}<\infty\right\}
	$$
	and if $a=\infty$ 
	$$L^{\infty}=L^{\infty}(X)=\left\{f \in \mathcal{M} \mid\|f\|_{\infty}\doteq \text{ess} \sup_{x \in X}|f(x)|<\infty\right\}.
	$$
	In some contexts we will write $\|\cdot\|_{a} =\|\cdot\|_{L^a(X)}$.
    
For a measurable space \((X, \mathcal{M})\), (a measure $\mu$ on it is denoted by \( \mu \in \mathcal{M} \)), the space of bounded Radon measures is given by
    $$BV(X) = \left\{\mu \in \mathcal{M}\mid \|\mu\|_{BV} \doteq \sup \left\{\sum_{j=1}^m|\mu(X_j)|\mid m \in \mathbb{N}, X = \cup_{j=1}^m X_j \right\} < \infty\right\}.$$
    
	Let \((U, \|\cdot\|_U)\) be a Banach space. For \(T > 0\), let \(\mathcal{L}\) denote the set of Bochner's measurable functions. The Bochner space \(L^aU\), for \(a \in [1, \infty)\), is defined as  
\[
L^aU = L^a([0, T]; U) = \left\{ f : [0, T] \to U \mid f \in \mathcal{L}, \ \|f\|_{L^aU} \doteq \left( \int_0^T \|f(t)\|_U^a \, dt \right)^{\frac{1}{a}} < \infty \right\}.
\]  

and \(a = \infty\) by
\[
L^\infty U = L^\infty([0, T]; U) = \left\{ f : [0, T] \to U \mid f \in \mathcal{L}, \ \|f\|_{L^\infty U} \doteq \sup_{t \in [0, T]} \|f(t)\|_U < \infty \right\}.
\]  

For the space of tempered distributions on \(\mathbb{T}^d\), we denote it by \(\mathcal{S}'\). For \(q \geq 1\) and \(n \in \mathbb{R}\), let us define the Bessel potential space by
	$$H_{q}^{n}=H_{q}^{n}\left(\mathbb{T}^{d}\right)=\left\{f \in \mathcal{S}' \mid\|f\|_{n,q}\doteq\left\|(I-\Delta)^{\frac{n}{2}} f\right\|_{q}<\infty\right\}.
	$$
	
For $\gamma \in (0,1]$ the H\"older space on $\mathbb{T}^d$ is given by $$C^{\gamma}=C^{\gamma}(\mathbb{T}^d)=\left\{f:\mathbb{T}^d \to \mathbb{R}^{e} \mid\|f\|_{\gamma}\doteq \|f\|_{\infty} + \sup_{x,y\in \mathbb{T}^d}\frac{|f(x)-f(y)|}{|x-y|^{\gamma}}<\infty\right\}.$$

	 Let a filtered probability space $(\Omega,\mathcal{F},(\mathcal{F}_t)_{t\geq0},\mathbb{P})$, a Banach space $(U,\|\cdot\|_U)$,  $q \in [2,\infty]$, and a stopping time $0 <\tau \leq T$, for some $T>0$. We  denote by  $\mathcal{X}$ the set of $U$-valued, $\left(\mathcal{F}_t\right)_{t \in[0, T]}$-adapted and continuous processes $X =\left\{X_{s}\right\}_{s \in[0, \tau]}$. We define 
	$$S_{\mathcal{F}}^{q}([0, \tau] ; U) = \left\{X \in \mathcal{X} \mid \Big\|\|X\|_{\tau,U}\Big\|_{q} < \infty\right\}.$$
		
		Also, let $\mathcal{Y}$ denote the set of  $U$-valued predictable processes $Y=\left\{Y_{s}\right\}_{s \in[0, \tau]}$. Then, for $q \in [2,\infty)$ we define
		$$L_{\mathcal{F}}^{q}([0,\tau];U) = \left\{Y \in \mathcal{Y}\mid\Big\|\|Y\|_{L^qU}\Big\|_{q} < \infty\right\}.$$
		
			These spaces are Banach spaces, with the norms $\Big\|\|\cdot\|_{\tau,U}\Big\|_q$ and $\Big\|\|\cdot\|_{L^qU}\Big\|_q$, respectively.

Let $f$ and $g$ be positive probability density functions on \(\mathbb{T}^d\). The relative entropy (or Kullback–Leibler divergence) of $f$ with respect to $g$ is defined as 
\begin{align*}
    \mathcal{H} \left(f|g\right)\doteq \int_{\mathbb{T}^d}f(x)\ln{\left(\frac{f(x)}{g(x)}\right)} \, dx.
\end{align*}

Also the Fisher information of $f$ with respect to $g$ is given by
\begin{align*}
    \mathcal{I}\left(f|g\right) \doteq \int_{\mathbb{T}^d}f(x)\left|\nabla \ln{\frac{f(x)}{g(x)}}\right|^2 \, dx = \int_{\mathbb{T}^d}\frac{(g(x))^2}{f(x)}\left|\nabla \left(\frac{f(x)}{g(x)}\right)\right|^2 \, dx.
\end{align*}
\subsection{Assumptions}
			
\begin{enumerate}
\item[$(\mathbf{A}^V)$] Let $V^N:\mathbb{T}^d\to \mathbb{R}$  the periodization of the scaling $V_0^N:\mathbb{R}^d \to \mathbb{R}$ given by $V_0^N(y) = N^{\beta}V_0(N^{\frac{\beta}{d}}y)$, $\beta \in (0,1)$, with $V_0:\mathbb{R}^d \to \mathbb{R}$ and
\begin{align}
    V_0(y)  \doteq 
     \begin{cases}
       \frac{\Gamma(d/2)}{2\pi^{d/2}}\exp{(-|y|^2)} &\quad |y|\leq 1/2\\
       \frac{\Gamma(d/2)}{2\pi^{d/2}}\exp{(1/4)}\exp{(-|y|)} &\quad |y|> 1/2.\\
      \end{cases}\label{def v0}
\end{align}


\item[$(\mathbf{A}^K)$]The kernel \( K \) is chosen so that \( \|K\|_1 < \infty \) and there exists \( q \ge 2 \) with \( q > d \), for which the following holds for every \( f \in L^q \left(\mathbb{T}^d\right) \): \[ \|K \ast f\|_{\infty} \le C_K\|f\|_{q}. \]

\item[$(\mathbf{A}^{K'})$]The kernel \( K \) is a Radon measure so that \( \|K\|_{BV} < \infty \).

\item[$(\mathbf{A}^{\nabla\cdot K})$] 
It holds $\nabla\cdot K =0$ and there exists $K_0 \in C^{\gamma}$, $\gamma \in \left(0,\frac{1}{2}\right)$ such that 
\begin{align*}
    K= \nabla\cdot K_0. \, 
\end{align*}
For examples of singular kernels that satisfy Assumptions $(\mathbf{A}^K)$, $(\mathbf{A}^{K'})$ and $(\mathbf{A}^{\nabla\cdot K})$, see Section \ref{example}.

\item[$(\mathbf{A}^{\sigma})$]
	The  coefficient  $\sigma: [0,T] \to \mathbb{R}^{d \times d}$ is measurable and bounded.
 \item[$(\mathbf{A}^{\rho_0})$]
	The initial condition $\rho_0$ is taken such that $\rho_0 \in \left(\lambda^{-1}, \lambda\right)$, for some $\lambda >1$, and
 \begin{align*}
       \rho_0 \in L^1 \cap H^{3}_q\left(\mathbb{T}^d\right).
 \end{align*}

	\end{enumerate}

  \begin{rem}     
We observe  that,
\begin{align}
    \left|\nabla V_0\right| \leq C_{d} V_0, \label{mollifier inequality}
\end{align}
and definition of periodization, for $x \in \mathbb{T}^d$: 
   \begin{align}
       V^N(x) &\doteq \sum_{k \in \mathbb{Z}^d} V_0^N(x - k). \label{def Vn period}
   \end{align}
   This series, along with the sequence of partial sums of its derivatives, converges uniformly on the torus, by virtue of Theorem 8.32 in \cite{folland2013real}. Then 
for a fixed  $N \in \mathbb{N}$ 
         \begin{align}   
   \int_{\mathbb{T}^d} V^N(x) \,dx  &= \int_{\mathbb{T}^d}\sum_{k \in \mathbb{Z}^d} V_0^N(x - k) dx \nonumber\\
   &=\sum_{k \in \mathbb{Z}^d} \int_{\mathbb{T}^d+k}V_0^N(y)\, dy =  \int_{\mathbb{R}^d}V_0^N(y)\, dy = 1. \label{vndensity}
   \end{align}   

Furthermore, since  $$\exp{(-|y|^2)}, \exp{(-|y|)} < {(2d)!}|y|^{-2d}$$
we have for $N \in \mathbb{N}$
\begin{align}
     V_0\left(N^{\frac{\beta}{d}}(x-k)\right) & \overset{(\ref{def v0})}{\leq} \frac{C_d}{\left|N^{\frac{\beta}{d}}\left(x - k \right)\right|^{2d}}\nonumber\\
       & \overset{}{=} \frac{C_d}{\left|N^{\frac{\beta}{d}}\right|^{2d}\left|\left(x -k \right)\right|^{2d}}, \, \, k \in \mathbb{Z}^d. \label{decay of mollifier}
   \end{align}   
  \end{rem}
 
  \begin{rem}
    We have chosen this specific mollifier $V_0$  in order to derive the inequality (\ref{weldef}) below, which shows that the relative entropy functional between the regularized empirical measure and the solution of the Fokker-Planck equation is well-defined.
      We  can take   the mollifier $V_0$  such that satisfies (\ref{mollifier inequality})-(\ref{decay of mollifier}).
  \end{rem}
\subsection{Statement of the main results.}

The next two results (whose proofs are provided in Appendix B) ensure that the limiting equation is well-posed and possesses the necessary regularity for the proofs of the main results.
\begin{thm}\label{Teo_Krylov} Assume $(\mathbf{A}^K)$,  $(\mathbf{A}^{\sigma})$ and $\nabla\cdot K = 0$.
Let $ \rho_0 \in \mathcal{P}\left(\mathbb{T}^d\right) \cap H^{1-\frac{2}{q}}_q(\mathbb{T}^d)$, with  $ \rho_0 \in \left(\lambda^{-1},  \lambda \right)$, for some $\lambda > 1$. There exists a time $T>0$ depending on $\norm{\rho_0}_{q},\lambda, q, C_K$ and $d$ such that the SPDE (\ref{SPDE_Ito}) admits a unique solution $\rho \in\left(\lambda^{-1}, \lambda \right) $, $\mathbb{P}$-a.s., and
\begin{align*}
 \rho \in L_{\mathcal{F}^B}^q\left([0, T]; H_q^{1}\left(\mathbb{T}^d\right)\right) \cap S_{\mathcal{F}^B}^{\infty}\left([0, T] ; L^1 \cap L^q\left(\mathbb{T}^d\right)\right).
\end{align*}
\end{thm}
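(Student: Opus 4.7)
The plan is to reduce the SPDE (\ref{SPDE_Ito}) to a pathwise deterministic nonlinear Fokker--Planck equation via an It\^o--Wentzell transform, solve the latter on the torus by a fixed-point argument based on Krylov's parabolic $L^q$-theory, and transfer the solution back to the original stochastic problem.

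\textbf{Step 1 (reduction to a deterministic PDE).} Set $Y_t \doteq \int_0^t \sigma_s\,dB_s$ and, for a sufficiently regular candidate $\rho$, define $\tilde\rho_t(x) \doteq \rho_t(x+Y_t)$. A direct application of the It\^o--Wentzell formula, using that $\sigma_t$ is independent of $x$, shows that the correction $\frac{1}{2} D^2\rho_t(\sigma\sigma^\top)_t$ is cancelled by the joint quadratic variation of $\nabla\rho$ and $Y$, while the transport martingale $-\nabla\rho_t\cdot\sigma_t\,dB_t$ is compensated by the first-order shift contribution. Since convolution commutes with translation, $(K*\rho_t)(x+Y_t) = (K*\tilde\rho_t)(x)$, so $\tilde\rho$ satisfies for $\mathbb{P}$-a.e.\ $\omega$ the deterministic equation
\begin{align*}
\partial_t\tilde\rho_t \;=\; \tfrac{1}{2}\Delta\tilde\rho_t - (K*\tilde\rho_t)\cdot\nabla\tilde\rho_t, \qquad \tilde\rho_0 = \rho_0,
\end{align*}
which, using $\nabla\cdot K = 0$, is equivalent to the divergence form $\partial_t\tilde\rho_t = \tfrac{1}{2}\Delta\tilde\rho_t - \nabla\cdot(\tilde\rho_t\,(K*\tilde\rho_t))$. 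Rather than applying It\^o--Wentzell to an a priori weak $\rho$, I will construct $\tilde\rho$ first pathwise and verify the SPDE a posteriori by reversing the transform.

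\textbf{Step 2 (fixed point for the deterministic PDE).} For each $\omega$, consider the closed convex set
\begin{align*}
\mathcal{K}_\lambda \doteq \{\,v \in L^\infty([0,T];L^q)\cap L^q([0,T];H_q^1):\ \lambda^{-1}\le v\le \lambda,\ v|_{t=0} = \rho_0\,\}
\end{align*}
and the map $\Phi : v \mapsto u$, where $u$ solves the linear parabolic equation $\partial_t u = \tfrac{1}{2}\Delta u - (K*v)\cdot\nabla u$ with initial datum $\rho_0$. Assumption $(\mathbf{A}^K)$ gives $\|K*v\|_{L^\infty L^\infty}\le C_K\lambda$, and since $\nabla\cdot(K*v) = (\nabla\cdot K)*v = 0$, Krylov's parabolic $L^q$-theory yields a unique $u \in L^q([0,T];H_q^1)\cap C([0,T]; H_q^{1-2/q})$ with a maximal-regularity bound depending only on $C_K\lambda$, $T$, $q$, $d$ and $\|\rho_0\|_{H_q^{1-2/q}}$. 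Testing the equation against $(u-\lambda)_+$ and $(\lambda^{-1}-u)_+$ and using divergence-freeness of the drift produces the pointwise bounds $\lambda^{-1}\le u\le \lambda$, so $\Phi(\mathcal{K}_\lambda)\subset \mathcal{K}_\lambda$. The difference $w = \Phi(v_1) - \Phi(v_2)$ satisfies an analogous linear equation with source $-(K*(v_1-v_2))\cdot\nabla u_2$, whose $L^q L^q$ norm is bounded by $C_K\|v_1-v_2\|_{L^\infty L^q}\|u_2\|_{L^q H_q^1}$; another application of maximal regularity gives $\|w\|_{L^\infty L^q}\le C(\lambda, C_K, \|\rho_0\|_{H_q^{1-2/q}})\,T^{\alpha}\|v_1-v_2\|_{L^\infty L^q}$ for some $\alpha > 0$. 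Choosing $T$ small depending only on the declared constants makes $\Phi$ a contraction, producing a unique fixed point $\tilde\rho \in \mathcal{K}_\lambda$.

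\textbf{Step 3 (transfer back and main obstacle).} Define $\rho_t(x)\doteq \tilde\rho_t(x - Y_t)$; by construction $\rho$ is $(\mathcal{F}_t^B)$-adapted, and torus translation preserves every Lebesgue and Bessel norm, so $\rho$ inherits the regularity $L^q_{\mathcal{F}^B}([0,T];H_q^1)\cap S^\infty_{\mathcal{F}^B}([0,T];L^1\cap L^q)$, the $L^1$-bound coming from mass conservation of the divergence-form equation together with pointwise positivity $\rho \geq \lambda^{-1}$. A direct It\^o computation on $\tilde\rho_t(x - Y_t)$, now legitimate thanks to the regularity of $\tilde\rho$, shows that $\rho$ solves (\ref{SPDE_Ito}) in the analytically weak, probabilistically strong sense, and uniqueness for the SPDE follows by transforming any other solution through $Y$ and invoking uniqueness of the fixed point. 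The main obstacle is precisely the circularity between regularity and the It\^o--Wentzell reduction: one formally needs $D^2\rho$ to define the Stratonovich-type corrections, yet a priori only $\rho \in L^q H_q^1$ is available. Building $\tilde\rho$ first in the maximal-regularity class $L^q H_q^1 \cap CH_q^{1-2/q}$ and verifying the SPDE afterwards is designed to sidestep this issue, while simultaneously allowing the divergence-free drift to feed directly into the standard parabolic maximum principle that enforces the pointwise bounds in $(\lambda^{-1},\lambda)$.
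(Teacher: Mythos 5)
Your proposal is correct, but it takes a genuinely different route from the paper. The paper never transforms away the common noise: it works directly with the SPDE, freezes the drift coefficient $K*\xi$ for $\xi$ in a ball $\mathbb{B}\subset S^\infty_{\mathcal{F}^B}([0,T];L^q)$, solves the resulting \emph{linear SPDE} by verifying conditions 5.1--5.6 of Krylov's stochastic $L^q$-theory (Theorem 5.1 in \cite{Krylov}), extracts the pointwise bounds from the stochastic maximum principle (Theorem 5.12 in \cite{Krylov}), and closes the contraction by applying It\^o's formula for the $L^q$-norm of an $H^1_q$-valued process (from \cite{kry}) to the difference of two iterates, exploiting $\nabla\cdot K = 0$ to kill the transport term after integration by parts. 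Your route instead pushes the common noise into the spatial variable via the random translation $x\mapsto x - Y_t$ with $Y_t = \int_0^t\sigma_s\,dB_s$, which is legitimate precisely because $\sigma_t$ does not depend on $x$: the It\^o--Wentzell corrections cancel exactly as you describe, and $K*$ commutes with translations. This reduces the problem to a genuinely deterministic Fokker--Planck equation, so your $\tilde\rho$ is a single deterministic profile and the SPDE solution $\rho_t(x)=\tilde\rho_t(x-Y_t)$ is an $\omega$-dependent translate of it; that is a nice structural observation the paper's proof does not expose. Your approach is thus simpler and more transparent in the present $x$-independent-$\sigma$ setting, while the paper's stochastic $L^q$-machinery would extend with little change to $x$-dependent coefficients where the translation trick fails. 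Two small expository points to tighten: (i) the factor $T^\alpha$ in your contraction estimate does not come from maximal regularity alone; it should be derived from an $L^q$-energy estimate on $w=\Phi(v_1)-\Phi(v_2)$ (integrating $\tfrac{d}{dt}\|w_t\|_q \le C_K\|v_1-v_2\|_q\|\nabla u_2\|_q$ and applying H\"older in time gives $T^{1-1/q}\|u_2\|_{L^qH^1_q}$), mirroring the Gronwall step the paper uses in the stochastic setting; (ii) the a posteriori verification that $\rho$ solves \eqref{SPDE_Ito} should be phrased against the weak formulation \eqref{weak_sol}, either by mollifying $\tilde\rho$ and passing to the limit or by testing against $\phi\in C^2$ directly, since $\tilde\rho\in L^qH^1_q\cap CH^{1-2/q}_q$ is not a priori $C^2$ and the It\^o--Wentzell reversal should not be invoked for the classical Hessian.
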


\vspace{.5cm}
	\begin{cor}\label{coro Teo_Krylov} Under the conditions stated in Theorem \ref{Teo_Krylov}, together with Assumption $(\mathbf{A}^{\rho_0})$ and $K = \nabla\cdot K_0$, for some $K_0 \in L^{\infty}$,

    \vspace{.1cm}
    \begin{align}
         \rho \in L_{\mathcal{F}^B}^{q}\left([0,T];H^{4}_{q}\left(\mathbb{T}^d\right)\right) \cap S_{\mathcal{F}^B}^{q}\left([0,T];H^{3}_{q}\left(\mathbb{T}^d\right)\right). \nonumber
    \end{align}
\end{cor}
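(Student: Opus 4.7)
The plan is to bootstrap the regularity provided by Theorem \ref{Teo_Krylov} by treating \eqref{SPDE_Ito} as a linear, non-degenerate stochastic parabolic equation with a \emph{given} random drift $b_t := K\ast\rho_t$, and then applying Krylov's $L^{q}$-maximal regularity theory for such SPDEs.

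First I would establish the key identity for the drift. Because $\nabla\cdot K=0$ and $K=\nabla\cdot K_0$ with $K_0\in L^{\infty}$, a distributional integration by parts gives
$$b_t \;=\; K_0\ast\nabla\rho_t,\qquad \partial^{\alpha}b_t \;=\; K_0\ast\partial^{\alpha}\nabla\rho_t,$$
so that, by Young's inequality on the torus, $\|\partial^{\alpha}b_t\|_{q}\leq\|K_0\|_{\infty}\|\rho_t\|_{H^{|\alpha|+1}_{q}}$. Thus any Sobolev regularity gained on $\rho$ transfers to $b$ with a loss of only one derivative, and without loss of integrability.

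Next I would rewrite \eqref{SPDE_Ito} as the linear SPDE
$$d\rho_t \;=\; \Big[\tfrac{1}{2}\bigl(I+\sigma_t\sigma_t^{\top}\bigr):D^{2}\rho_t \;-\; b_t\cdot\nabla\rho_t\Big]\,dt \;-\; \sigma_t\cdot\nabla\rho_t\,dB_t.$$
The leading operator is uniformly elliptic and the stochastic parabolicity condition is met under $(\mathbf{A}^{\sigma})$, so Krylov's theory yields, for each integer $s\geq 1$, a maximal regularity estimate of the form
$$\|\rho\|_{L^{q}_{\mathcal{F}^{B}}(H^{s}_{q})}+\|\rho\|_{S^{q}_{\mathcal{F}^{B}}(H^{s-2/q}_{q})}\;\leq\; C\Big(\|\rho_0\|_{H^{s-2/q}_{q}}+\|b\cdot\nabla\rho\|_{L^{q}_{\mathcal{F}^{B}}(H^{s-2}_{q})}\Big),$$
with $C$ depending on $T$, $\sigma$ and $\|b\|_{L^{\infty}(\mathbb{T}^{d})}$; a standard Moser-type product inequality bounds $\|b\cdot\nabla\rho\|_{H^{s-2}_{q}}$ by a combination of $\|b\|_{H^{s-1}_{q}}$ and $\|\rho\|_{H^{s}_{q}}$. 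Because $\sigma$ depends only on $t$, the commutator $[(I-\Delta)^{s/2},\sigma\cdot\nabla]$ vanishes, so the stochastic term produces no loss.

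Combining these two ingredients drives a short bootstrap: starting from the $H^{1}_{q}$-information of Theorem \ref{Teo_Krylov}, the drift identity upgrades $b$ to the Sobolev regularity needed to apply the maximal estimate with $s=2$, then with $s=3$, and finally with $s=4$, the last step using $\rho_0\in H^{3}_{q}$ from $(\mathbf{A}^{\rho_0})$ as the trace-space input; a Gronwall argument closes the bound at each stage thanks to the a priori two-sided control $\rho\in(\lambda^{-1},\lambda)$ inherited from Theorem \ref{Teo_Krylov}. The main obstacle is precisely the top-derivative estimate of the nonlinear drift: a naive bound on $\|K\ast\rho\|_{H^{3}_{q}}$ would require $\nabla K$ in $L^{q}$, which is not available under $(\mathbf{A}^{K})$--$(\mathbf{A}^{\nabla\cdot K})$; the reduction $b=K_0\ast\nabla\rho$ with $K_0\in L^{\infty}$ saves exactly the missing derivative and is what makes the bootstrap close at the desired level $L^{q}_{\mathcal{F}^{B}}(H^{4}_{q})\cap S^{q}_{\mathcal{F}^{B}}(H^{3}_{q})$.
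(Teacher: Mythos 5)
Your overall strategy is the same as the paper's: view \eqref{SPDE_Ito} as a linear non-degenerate stochastic parabolic equation with frozen drift $b_t = K\ast\rho_t$, exploit the identity $b_t = K_0\ast\nabla\rho_t$ (and more generally $\partial^{\alpha}b_t = K_0\ast\partial^{\alpha}\nabla\rho_t$) to transfer Sobolev regularity from $\rho$ to $b$ with a one-derivative loss, and bootstrap through Krylov's $L^q$-theory (Theorem 5.1 of \cite{Krylov}) from $H^1_q$ up to $H^4_q$. That much matches.

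However, there is a genuine gap in how you propose to close the bootstrap at the step $H^2_q\to H^3_q$ (and again at $H^3_q\to H^4_q$). When you verify Krylov's condition 5.6 at that level, the drift coefficient produces a term of the form $\partial_j b_t\cdot\nabla(u-v)$, and the best Young bound available is $\|\partial_j b_t\|_{\infty}\leq\|K_0\|_{\infty}\|\partial_j\nabla\rho_t\|_{1}$; equivalently, in the paper's formulation, $\|\partial_j(K\ast\rho_t)\nabla(u_t-v_t)\|_q\leq \bigl(\|K_0\|_{\infty}\|\partial_j\nabla\rho_t\|_2 + \|K\|_1\lambda\bigr)\|u_t-v_t\|_{2,q}$. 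For Krylov's theorem the Lipschitz constant in condition 5.6 must be uniform (deterministic, independent of $t$ and $\omega$). The $H^2_q$-regularity of $\rho$ obtained at the previous step only gives $\partial_j\nabla\rho_t\in L^q$ for a.e.\ $t$, not a uniform-in-$(t,\omega)$ bound, and the $L^\infty$ two-sided control $\rho\in(\lambda^{-1},\lambda)$ that you invoke says nothing about $\nabla^2\rho$. You cannot close the Gronwall argument at this stage without that extra information.

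The paper supplies exactly the missing ingredient via a separate a priori $L^2$-energy estimate: using Itô's formula for $\|\partial^{\alpha}\rho_t\|_2^2$ (first for $|\alpha|=1$, then for $|\alpha|=2$ through a mollification argument), the divergence-free structure of $K$, the bound $\|K\ast\rho_s\|_\infty\leq\|K\|_1\lambda$, and the product decomposition $\partial^{\alpha}(\nabla\rho\,(K\ast\rho)) = (K_0\ast\partial^{\alpha}\nabla\rho)\nabla\rho + (K\ast\rho)\partial^{\alpha}\nabla\rho$, they obtain the deterministic pathwise bound $\sup_{t\in[0,T]}\|\partial_j\nabla\rho_t\|_2\leq C_{K,K_0,\lambda,\rho_0}$ (their (\ref{bound second derivative rho})). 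Only with this in hand does the coefficient in condition 5.6 become admissible for the $H^3_q$ and then the $H^4_q$ applications of \cite{Krylov}. Your proposal should be amended to include a uniform second-derivative energy estimate of this kind (or an equivalent deterministic coefficient bound); otherwise the "Gronwall closes the bound at each stage" step does not go through.
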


\vspace{.5cm}

 \begin{thm} \label{first main}
Assume $(\mathbf{A}^V)$, $(\mathbf{A}^K)$, $(\mathbf{A}^{\nabla\cdot K})$,   $(\mathbf{A}^{\sigma})$, and $(\mathbf{A}^{\rho_0})$, let $T_{max}$ be the maximal existence time for (\ref{SPDE_Ito}) and fix $T \in (0,T_{max})$. In addition, let the dynamics of the particle system be given by (\ref{particles})   and  
\begin{align*}
\lim_{N \to \infty} N^{\theta}\mathcal{H}(\rho_0^N|\rho_0)  =0, \, \, \, \mathbb{P}-a.s.
\end{align*}
where
\begin{align*}
\theta = \min \left(\beta(1-2\gamma);\frac{\beta}{d}\gamma^2;\frac{1}{2}-\beta\Big(1 + \frac{1 }{d}\Big) \right) - \delta 
\end{align*}
with  $\delta > 0$, such that $\theta >0$, $d \geq 1$ and $\beta \in \left(0,\frac{1}{2\big[1 + \frac{1}{d}\big]}\right) $.
Then
\begin{align*}
        \lim_{N \to \infty} N^{\theta}  \sup_{t \in [0,T]}\mathcal{H}(\rho_t^N|\rho_t) = 0, \, \, \mathbb{P}-a.s.
    \end{align*}
\noindent and $\rho$ is the unique solution of SPDE \eqref{SPDE_Ito} with initial condition $\rho_0$.
\end{thm}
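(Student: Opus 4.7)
The plan is to apply Itô's formula in two stages: first to derive the SPDE satisfied by the mollified empirical measure $\rho_t^N$, and then to track the pathwise evolution of the relative entropy $\mathcal{H}(\rho_t^N|\rho_t)$. Applying Itô to $V^N(x - X_t^{i,N})$ and averaging over $i = 1,\dots,N$ gives
\begin{align*}
    d\rho_t^N &= \tfrac12 \Delta \rho_t^N\,dt + \tfrac12 D^2\rho_t^N : (\sigma_t\sigma_t^\top)\,dt - \nabla\rho_t^N \cdot (K\ast\rho_t^N)\,dt \\
    &\qquad + \mathcal{E}_t\,dt - \nabla\rho_t^N\cdot\sigma_t\,dB_t + dM_t^N,
\end{align*}
where $\mathcal{E}_t$ encodes the mismatch between the particle interaction $\frac{1}{N}\sum_i \nabla V^N(\cdot-X_t^{i,N})(K\ast\rho_t^N)(X_t^{i,N})$ and the mean-field convolution drift, and $dM_t^N = -\frac{1}{N}\sum_i \nabla V^N(\cdot-X_t^{i,N})\,dW_t^i$ is the martingale coming from the individual noises. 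The key structural observation is that $\rho_t^N$ and $\rho_t$ are transported by the \emph{same} common-noise field $-\nabla(\cdot)\cdot \sigma_t\,dB_t$ and share the \emph{same} second-order operator $\tfrac12\sigma\sigma^\top:D^2$; in Stratonovich form both densities are advected by an identical velocity, which suggests that the common noise should not contribute to the entropy balance.

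I then apply Itô to $F(a,b) = a\log(a/b)$ at $(\rho_t^N(x),\rho_t(x))$ and integrate in $x$. After integration by parts, the two $\tfrac12 \sigma\sigma^\top:D^2$ drifts combine with the diagonal and cross Itô corrections $\tfrac12 F_{aa}\,d\langle\rho^N\rangle_{\mathrm{com}} + F_{ab}\,d\langle\rho^N,\rho\rangle + \tfrac12 F_{bb}\,d\langle\rho\rangle$ produced by the common noise into a perfect cancellation; a parallel integration by parts shows that the stochastic integral against $\sigma\,dB$ coming from $F_a\,d\rho^N + F_b\,d\rho$ also vanishes identically. What remains is
\[
    d\mathcal{H}(\rho_t^N|\rho_t) = -\tfrac12 \mathcal{I}(\rho_t^N|\rho_t)\,dt + \mathcal{R}_t^{\mathrm{int}}\,dt + \mathcal{R}_t^{\mathrm{noise}}\,dt + d\mathcal{M}_t,
\]
where $-\tfrac12\mathcal{I}$ is the Fisher-information dissipation produced by the individual-noise $\tfrac12 \Delta$ drift, $\mathcal{R}_t^{\mathrm{int}}$ is the entropy-level image of $\mathcal{E}_t$, $\mathcal{R}_t^{\mathrm{noise}}$ collects the Itô correction $\frac{1}{2N^2}\sum_i \int (\rho^N)^{-1}|\nabla V^N(\cdot-X^{i,N})|^2\,dx$ generated by $dM_t^N$, and $\mathcal{M}_t$ is the continuous martingale produced by $F_a\,dM^N$.

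The three exponents defining $\theta$ then arise from three distinct error contributions. For $\mathcal{R}_t^{\mathrm{int}}$, the decomposition $K = \nabla\cdot K_0$ with $K_0 \in C^\gamma$ permits one integration by parts in $x$, which gains a Hölder factor $\sim N^{-\beta\gamma/d}$ from the diameter $\sim N^{-\beta/d}$ of the support of $V^N$ and converts the remaining gradient into a term controlled by the $H^3_q$ regularity of $\rho$ provided by Corollary \ref{coro Teo_Krylov} together with Assumption $(\mathbf{A}^K)$; pairing one score factor with the dissipation through $\int \rho^N|\nabla\log(\rho^N/\rho)|\le \sqrt{\mathcal{H}\cdot\mathcal{I}}$ and Young's inequality then yields the two exponents $\beta(1-2\gamma)$ and $\beta\gamma^2/d$. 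For the noise contribution, the a priori lower bound $\rho^N \gtrsim \lambda^{-1}$ inherited from $\rho_0 \in (\lambda^{-1},\lambda)$ gives $\mathcal{R}_t^{\mathrm{noise}}\lesssim \lambda N^{-1}\|\nabla V^N\|_\infty^2 \sim N^{2\beta(1+1/d)-1}$, and a parallel BDG bound on $\mathcal{M}_t$ with the pathwise Fisher bound $\mathcal{I}\lesssim N^{2\beta/d}$ produces, after taking a square root, the third exponent $\tfrac12 - \beta(1+1/d)$, which fixes the restriction $\beta < \tfrac{1}{2(1+1/d)}$.

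Finally, to upgrade to the pathwise $\mathbb{P}$-a.s. conclusion, I would apply the Burkholder-Davis-Gundy inequality to $\mathcal{M}_t$ at large exponent $p$: after integration by parts in $x$ and Cauchy-Schwarz one has $d\langle\mathcal{M}\rangle_t \le \frac{1}{N}\mathcal{I}(\rho_t^N|\rho_t)\,dt$, so $\mathbb{E}[\sup_{t\le T}|\mathcal{M}_t|^p] \lesssim C_p N^{-p\theta}$ for any $p \ge 2$. A stopping-time argument absorbs a fraction of the dissipation to control the score-dependent part of $\mathcal{R}^{\mathrm{int}}$, Grönwall closes the loop in $L^p$, and a Borel-Cantelli argument along $N \in \mathbb{N}$ for $p$ large enough upgrades the moment bound to almost-sure convergence at rate $N^{-\theta}$, with the arbitrarily small $\delta > 0$ accommodating polylogarithmic losses from the union bound. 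The main obstacle is expected to be $\mathcal{R}_t^{\mathrm{int}}$: keeping $\beta(1-2\gamma) > 0$ and $\beta\gamma^2/d > 0$ simultaneously forces a careful splitting of $K\ast\rho^N - K\ast\rho$ into a commutator $[K, V^N\ast]$ part controlled by the Hölder norm of $K_0$ and a closure part bounded by $\mathcal{H}$ itself through Csiszár-Kullback-Pinsker, and balancing these against the available dissipation is the computational core of the argument.
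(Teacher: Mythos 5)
Your overall architecture matches the paper closely: apply It\^o first to $V^N(\cdot - X_t^{i,N})$ and then to $a\ln(a/b)$ evaluated at $(\rho_t^N,\rho_t)$; observe that the common-noise second-order terms and the $\sigma\,dB$ martingale contributions cancel; retain the Fisher dissipation $-\frac12\int_0^t\mathcal{I}(\rho_s^N|\rho_s)\,ds$; control the interaction error via $K=\nabla\cdot K_0$, $K_0\in C^\gamma$, producing the exponents $\beta(1-2\gamma)$ and $\beta\gamma^2/d$; apply BDG to the individual-noise martingale; and finish with Borel--Cantelli plus pathwise Gr\"onwall. You also correctly identified the three exponents entering $\theta$ and the constraint $\beta<\tfrac{1}{2(1+1/d)}$. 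That is the same route as the paper.

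However there is a genuine gap in your handling of the individual-noise It\^o correction $\mathcal{R}_t^{\mathrm{noise}}=\frac{1}{2N^2}\sum_i\int_0^t\int_{\mathbb{T}^d}\frac{|\nabla V^N(x-X_s^i)|^2}{\rho_s^N(x)}\,dx\,ds$. You invoke a uniform lower bound $\rho^N\gtrsim\lambda^{-1}$ ``inherited from $\rho_0\in(\lambda^{-1},\lambda)$'', but no such bound holds: $\rho_t^N$ is the mollified empirical measure, not a solution to the SPDE, and at points far from any particle one only has $\rho_s^N(x)\gtrsim N^\beta e^{-N^{2\beta/d}}$, which decays to zero. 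The paper avoids any lower bound on $\rho^N$ by observing the pointwise inequality $|\nabla V^N(x-X_s^i)|\le C_dN^{\beta/d}V^N(x-X_s^i)$ (a consequence of $|\nabla V_0|\le C_dV_0$), so that
\begin{align*}
\frac{1}{N}\sum_{i=1}^N\frac{|\nabla V^N(x-X_s^i)|^2}{\rho_s^N(x)}\;\lesssim\;N^{\beta+2\beta/d}\,\frac{\tfrac1N\sum_i V^N(x-X_s^i)}{\rho_s^N(x)}\;=\;N^{\beta+2\beta/d},
\end{align*}
which after dividing by $N$ and integrating gives $C_d\,t\,N^{-1+\beta+2\beta/d}$. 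This cancellation is essential and is not recoverable from your argument; it also yields a sharper rate than the $N^{2\beta(1+1/d)-1}$ you quote.

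A second gap is in the martingale estimate. You assert $d\langle\mathcal{M}\rangle_t\le\frac1N\mathcal{I}(\rho_t^N|\rho_t)\,dt$ and then ``the pathwise Fisher bound $\mathcal{I}\lesssim N^{2\beta/d}$''; neither is justified. After integration by parts and Cauchy--Schwarz one only reaches $d\langle\mathcal{M}\rangle_t\le\mathcal{I}(\rho_t^N|\rho_t)\,dt$ (the factor $1/N$ is consumed by $\int(V^N)^2/\rho^N\le N$), and there is no a priori deterministic bound $\mathcal{I}\lesssim N^{2\beta/d}$ available along the trajectory. The paper's Lemma~\ref{martingale} proceeds differently: it shows the $B$-driven martingale pieces vanish identically by stochastic Fubini and $\int_{\mathbb{T}^d}\nabla\rho^N\,dx=0$, and then bounds the $W^i$-driven martingale via BDG using the crude sup-norm estimate on $\ln\rho^N-\ln\rho$ together with $\|\nabla V^N\|_{L^1(\mathbb{T}^d)}\lesssim N^{\beta/d}$, which is what actually produces $\theta_3=\tfrac12-\beta(1+1/d)$. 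Your version would need to be reworked along those lines.
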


			In view of the previous result, we obtain a rate of convergence for the genuine empirical measure, which can be interpreted as a propagation of chaos for the marginals of the empirical measure of the particle system. Following \cite[Section 8.3]{BogachevII}, let us introduce the Kantorovich-Rubinstein metric which reads, for any two probability measures $\mu$ and $\nu$ on $\mathbb{T}^d$,
			\begin{equation}\label{eq:defWasserstein}
				\|\mu - \nu \|_{0} = \sup \left\{ \int_{\mathbb{T}^d} \phi \, d(\mu-\nu) \, ; ~ \phi \text{ Lipschitz  with } \|\phi\|_{L^\infty}\leq 1 \text{ and } \|\phi\|_{\text{Lip}} \leq 1 \right\} .
			\end{equation}

			\begin{cor} \label{cor:rateEmpMeas}
				Let the same assumptions as in Theorem~\ref{first main}. Then   
				
				\[
			 \lim_{N \to \infty} N^{\theta-\delta}  \sup_{t\in[0,T]}  \left\|S_{t}^N - \rho_{t} \right\|_{0}^2 =0, \, \, \, \mathbb{P}-a.s.     
				\]
			\end{cor}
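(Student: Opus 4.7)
The plan is to apply the triangle inequality
\[
\|S_t^N - \rho_t\|_0 \;\leq\; \|S_t^N - \rho_t^N\|_0 \;+\; \|\rho_t^N - \rho_t\|_0
\]
and treat the two pieces by different mechanisms: the ``smoothing error'' $S_t^N - \rho_t^N$ is controlled deterministically in $N$ using only the scaling of $V^N$, while the ``entropy error'' $\rho_t^N - \rho_t$ is handled by the Csiszar--Kullback--Pinsker inequality combined with the pathwise bound supplied by Theorem~\ref{first main}.

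For the smoothing error, fix any test function $\phi$ with $\|\phi\|_\infty \leq 1$ and $\|\phi\|_{\mathrm{Lip}} \leq 1$. Since $\rho_t^N = V^N \ast S_t^N$, Fubini yields
\[
\int_{\mathbb{T}^d}\phi \, d(S_t^N - \rho_t^N) = \frac{1}{N}\sum_{i=1}^N \bigl[\phi(X_t^{i,N}) - (V^N \ast \phi)(X_t^{i,N})\bigr],
\]
and the Lipschitz property of $\phi$ together with $\int_{\mathbb{T}^d} V^N = 1$ (see \ref{vndensity}) gives $|\phi(x) - (V^N\ast\phi)(x)| \leq \int V^N(y)\,|y|\,dy$. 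The change of variables $z = N^{\beta/d} y$ in $V_0^N(y) = N^{\beta} V_0(N^{\beta/d} y)$, combined with the definition of $V^N$ by periodization, produces
\[
\int_{\mathbb{T}^d} V^N(y)\,|y|\, dy \;\leq\; \int_{\mathbb{R}^d} V_0^N(y)\,|y|\, dy \;=\; N^{-\beta/d}\!\int_{\mathbb{R}^d} V_0(z)\,|z|\,dz \;=\; C\, N^{-\beta/d},
\]
so $\sup_{t \in [0,T]} \|S_t^N - \rho_t^N\|_0 \leq C\, N^{-\beta/d}$ pathwise.

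For the entropy error, $\rho_t^N$ and $\rho_t$ are both probability densities and the class of test functions in \eqref{eq:defWasserstein} sits in the unit ball of $L^\infty$, so $\|\rho_t^N - \rho_t\|_0 \leq \|\rho_t^N - \rho_t\|_{L^1}$. The Csiszar--Kullback--Pinsker inequality then yields $\|\rho_t^N - \rho_t\|_{L^1}^2 \leq 2\mathcal{H}(\rho_t^N | \rho_t)$. Using $(a+b)^2 \leq 2(a^2 + b^2)$ and multiplying by $N^{\theta - \delta}$,
\[
N^{\theta - \delta}\sup_{t \in [0,T]}\|S_t^N - \rho_t\|_0^2 \;\leq\; 2C^2\, N^{\theta - \delta - 2\beta/d} \;+\; 4\, N^{-\delta} \cdot N^{\theta}\sup_{t \in [0,T]}\mathcal{H}(\rho_t^N|\rho_t).
\]
The second summand tends to $0$ almost surely by Theorem~\ref{first main}. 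For the first, the definition of $\theta$ gives $\theta - \delta \leq \tfrac{\beta}{d}\gamma^2 - 2\delta$, and since $\gamma \in (0,1/2)$ we have $\tfrac{\beta}{d}\gamma^2 < \tfrac{2\beta}{d}$, so the exponent is strictly negative and the term vanishes. Both contributions go to zero $\mathbb{P}$-a.s., which is the claim.

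No serious obstacle is anticipated: the only genuinely quantitative input beyond Theorem~\ref{first main} is the first moment of the mollifier $V^N$, which follows directly from its scaling; the rest is the triangle inequality and CKP. The only bookkeeping point is to verify that the exponent on the deterministic smoothing term remains strictly below zero after the $N^{\theta - \delta}$ amplification, which is automatic from the smallness of $\gamma$.
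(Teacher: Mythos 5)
Your proposal follows exactly the same route as the paper: the triangle inequality splitting $S^N_t - \rho_t$ into the mollification error $S^N_t - \rho^N_t$ (bounded by $C N^{-\beta/d}$ using the first moment of $V_0$ and the Lipschitz test functions) and the entropy error $\rho^N_t - \rho_t$ (bounded via $\|\cdot\|_0 \leq \|\cdot\|_{L^1}$ and Csisz\'ar--Kullback--Pinsker, then Theorem~\ref{first main}). The only addition you make beyond what the paper writes out is the explicit verification that $\theta - \delta - 2\beta/d < 0$ (via $\theta - \delta \leq \frac{\beta}{d}\gamma^2 - 2\delta < \frac{2\beta}{d}$), a bookkeeping step the paper leaves implicit; this is correct and in fact a useful clarification.
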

			
	\begin{proof} 
    Let $t\in (0,T_{max})$. We first observe that 
	there exists $C>0$ such that for any Lipschitz continuous function $\phi$ on $\mathbb{T}^d$, one has
				\begin{align}\label{eq:rateunmun}
					\left|\left\langle \rho^N_{t},\phi\right\rangle - \left\langle S^N_{t},\phi\right\rangle \right| \leq \frac{C \|\phi\|_{\text{Lip}}}{N^{\frac{\beta}{d}}}, \quad \, \, \, \mathbb{P}-a.s.
				\end{align}
				Indeed, 
\begin{align}
\left|\left\langle S_{t}^{N}, \phi\right\rangle - \left\langle \rho^N_t, \phi \right\rangle \right| &=\left| \left\langle S_{t}^{N}, \left(\phi-\phi\ast V^{N}\right)\right\rangle \right|
\nonumber\\
&\leq \left\langle S_{t}^{N}, \int_{\mathbb{R}^{d}} V_0(y)~ \left|\phi(\cdot)-  \phi\left(\cdot- \frac{y}{N^{\beta}}\right) \right|   dy \right\rangle \nonumber\\
&\leq \frac{C \|\phi\|_{\text{Lip}}}{N^{\frac{\beta}{d}}}. \nonumber
				\end{align}   
Now,  by (\ref{eq:defWasserstein}) and Lemma \ref{L1entropyfisherinformation inequality} in Appendix C,   

\begin{equation}\label{L1}
\left\|\rho^{N}_{t} - \rho_{t} \right\|_{0}^2\leq 
\left\|\rho^{N}_{t} - \rho_{t} \right\|_{L^{1}}^2
\leq C \mathcal{H}(\rho_t^N|\rho_t).
\end{equation}
By triangular inequality we have
    
\begin{align*}
    \sup_{t\in[0,T]}  \left\|S_{t}^N - \rho_{t} \right\|_{0} 
\leq  \sup_{t\in[0,T]}  \left\|S_{t}^N - \rho^{N}_{t} \right\|_{0} + \ 
 \sup_{t\in[0,T]}  \left\|\rho^{N}_{t} - \rho_{t} \right\|_{0}. 
\end{align*}

Thus applying (\ref{eq:rateunmun})   to the first term on the right-hand side of the above inequality, and Theorem \ref{first main} and \eqref{L1}   to the second term, we obtain Corollary \ref{cor:rateEmpMeas}.
    	\end{proof}

 \begin{thm} \label{second main}
Assume $(\mathbf{A}^V)$,$(\mathbf{A}^{K})$ or $(\mathbf{A}^{K'})$, $(\mathbf{A}^{\nabla\cdot K})$ and $(\mathbf{A}^{\rho_0})$  with $\sigma =0$, let $T_{max}$ be the maximal existence time for (\ref{SPDE_Ito}) and fix $T \in (0,T_{max})$. In addition, let the dynamics of the particle system be given by (\ref{particles})   and  for any $m \ge 1$,		\begin{align*}
\sup_{N \in \mathbb{N}} \Big \| \mathcal{H}(\rho_0^N|\rho_0) \Big \|_{{L^m(\Omega)}} < \infty
\end{align*}
with $ d \geq 1$ and  $\beta \in \left(0,\frac{1}{2\big[1 + \frac{1}{d}\big]}\right)$.
Then
\begin{align*}
\left \| \sup_{t\in [0,T]}\mathcal{H}(\rho_t^N|\rho_t) \right \|_{L^m(\Omega)}  &\lesssim  \Big \| \mathcal{H}(\rho_0^N|\rho_0) \Big \|_{L^m(\Omega)} +  N^{-\theta} \, \nonumber 
\end{align*}
where
\begin{align*}
\theta = \min \left(\beta(1-2\gamma);\frac{\beta}{d}\gamma^2;\frac{1}{2}-\beta\Big(1 + \frac{1 }{d}\Big) \right)
\end{align*}
\noindent and $\rho$ is the unique solution of PDE \eqref{SPDE_Ito} with initial condition $\rho_0$.
\end{thm}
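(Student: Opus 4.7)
The plan is to follow the same relative entropy approach used in Theorem \ref{first main}, but to upgrade the pathwise almost sure bound to a moment bound in $L^m(\Omega)$ via the Burkholder--Davis--Gundy (BDG) inequality. Since $\sigma=0$, the limiting equation reduces to the deterministic nonlinear Fokker--Planck equation
\begin{equation*}
\partial_t \rho_t = \tfrac12 \Delta \rho_t - \nabla\cdot\bigl(\rho_t (K\ast \rho_t)\bigr),
\end{equation*}
whose solution stays in $(\lambda^{-1},\lambda)$ by Theorem \ref{Teo_Krylov}, while the mollified empirical measure $\rho_t^N = V^N \ast S_t^N$ satisfies an Itô SPDE driven only by the individual noises $W^i$. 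First I would derive an evolution equation for $\mathcal{H}(\rho_t^N|\rho_t)$ by applying Itô's formula to the functional $\int \rho_t^N \ln(\rho_t^N/\rho_t)\,dx$, using the SPDE for $\rho_t^N$ and the PDE for $\rho_t$.

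The identity splits into five kinds of contributions: (i) a dissipation term $-\tfrac12 \mathcal{I}(\rho_t^N|\rho_t)$ coming from the Laplacian; (ii) an interaction error measuring the mismatch between $K\ast \rho_t^N$ and the empirical convolution $K\ast V^N\ast S_t^N$ tested against $\nabla \ln(\rho_t^N/\rho_t)$; (iii) a mollification commutator error; (iv) an Itô correction of order $N^{-1}\sum_i |\nabla V^N(\cdot - X_t^{i,N})|^2$ from the individual noises; and (v) a martingale term $M_t^N$ coming from those noises. For (ii) I would invoke the Jabin--Wang type large-deviation / log-exponential inequality applicable under $(\mathbf{A}^{\nabla\cdot K})$ with $K_0 \in C^\gamma$, yielding an upper bound of the form $C\,\mathcal{H}(\rho_t^N|\rho_t) + \varepsilon\,\mathcal{I}(\rho_t^N|\rho_t) + N^{-\beta(1-2\gamma)}$. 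For (iii) and (iv), the standard estimates $\|V^N\|_\infty \lesssim N^\beta$ and $\|\nabla V^N\|_\infty \lesssim N^{\beta(1+1/d)}$ together with the Fisher information dissipation produce errors of order $N^{-\beta\gamma^2/d}$ and $N^{-(1/2 - \beta(1+1/d))}$ respectively, matching the three pieces in the definition of $\theta$.

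Second, I would take the supremum in time and then the $L^m(\Omega)$ norm of the resulting inequality. The deterministic terms are handled termwise by Hölder and Minkowski, and Gronwall's inequality is applied after the Fisher information dissipation absorbs the $\varepsilon\,\mathcal{I}$ contributions. The stochastic part $\sup_{t\le T} M_t^N$ is controlled by BDG:
\begin{equation*}
\Bigl\| \sup_{t\le T} M_t^N \Bigr\|_{L^m(\Omega)} \lesssim \Bigl\| \langle M^N\rangle_T^{1/2} \Bigr\|_{L^m(\Omega)},
\end{equation*}
where $\langle M^N\rangle_T$ is a time integral of $N^{-1}\sum_i |\int \nabla V^N(\cdot - X_t^{i,N})\, \ln(\rho_t^N/\rho_t)\,dx|^2$. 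Using Cauchy--Schwarz inside the integrand, the uniform $L^\infty$ bound on $\ln \rho_t$ from $\rho_t \in (\lambda^{-1},\lambda)$, and bounding the remaining $\ln \rho_t^N$ factor pointwise by $\rho_t^N$ itself (since $x \ln x$ is controlled by $x^{1+\eta}$), the quadratic variation can be dominated by a constant times $N^{-\eta}$ for some $\eta>0$ plus a Fisher information term that is absorbed. After Gronwall, this yields the claimed bound.

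The main obstacle is the careful balancing of parameters in the martingale estimate: one must extract a genuinely negative power of $N$ from the BDG-controlled term while keeping the $L^m$ norm finite, which forces a delicate interplay between the scaling exponent $\beta$, the dimension $d$, and the Hölder exponent $\gamma$, and it is precisely this interplay that gives rise to the three competing rates whose minimum is $\theta$. A secondary technical issue is that the Jabin--Wang inequality must be applied in its moment form so that the entropy remains inside an $L^m$ expectation; this is done by first freezing the randomness in time, taking $L^m(\Omega)$ after Gronwall, and using Fubini to commute the time integral with the probabilistic norm.
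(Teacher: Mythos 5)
Your overall strategy (evolve the relative entropy, absorb the Fisher information dissipation, bound the drift error, control the martingale by BDG, then Gronwall in $L^m(\Omega)$) is the same skeleton as the paper's proof, but two of the key mechanisms you invoke do not match what the paper actually does, and one of them would not work as stated. First, for the interaction error (your item (ii)) you invoke a Jabin--Wang log-exponential / large-deviation inequality. The paper does \emph{not} use any such concentration inequality, and it could not: the Jabin--Wang machinery compares the \emph{joint law} of the $N$-particle system to the tensorized limit law, whereas here the relative entropy is a pathwise functional $\mathcal{H}(\rho_t^N\mid\rho_t)$ of the mollified empirical measure itself. Instead the paper splits the drift error into $I_t^1+I_t^2$ (see \eqref{only Holder}), handles $I_t^1$ by $K=\nabla\cdot K_0$, $\epsilon$-Young, convolution estimates and Lemma~\ref{L1entropyfisherinformation inequality}, and handles $I_t^2$ by a \emph{deterministic two-zone estimate} on $\langle S_s^N, V^N(x-\cdot)\,|x-\cdot|^\gamma\rangle$ (the split around $|x-\cdot|\lessgtr N^{-\beta\gamma/d}$ producing \eqref{triangularization  without compact support 0}--\eqref{triangularization  without compact support 1}), combined with the H\"older modulus of $K_0$. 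This is the place where the moderate interaction scaling is exploited; it is a pointwise argument, not a probabilistic concentration argument.

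Second, your bookkeeping of the three rates is off: you attribute $N^{-(1/2-\beta(1+1/d))}$ to the It\^o correction term $N^{-1}\sum_i|\nabla V^N|^2$ and leave the martingale with an unspecified $N^{-\eta}$. In the paper it is the other way around: the It\^o correction $II_t+IV_t$ gives the \emph{faster} rate $N^{-1+\beta+2\beta/d}$ (see \eqref{cancelations}), which is dominated and does not appear in $\theta$, while the martingale term is precisely where $\theta_3=\tfrac12-\beta(1+\tfrac1d)$ comes from (Lemma~\ref{martingale}, via BDG and the pointwise bounds $|\ln\rho_s-\ln\rho_s^N|\lesssim N^\beta$ and $\int|\nabla V^N|\lesssim N^{\beta/d}$). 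Relatedly, your sketch of the martingale estimate -- ``bounding $\ln\rho_t^N$ pointwise by $\rho_t^N$'' -- is not how the paper proceeds and does not by itself give the needed rate; the paper uses both the upper bound $\rho_s^N\lesssim N^\beta$ \emph{and} the lower bound $\rho_s^N\gtrsim N^\beta e^{-N^{2\beta/d}}$ from \eqref{bound for rhoN} to control $|\ln\rho_s^N|$, and then cancels the common-noise martingales by integration by parts over $\mathbb{T}^d$ before applying BDG to the $W^i$-driven pieces only.
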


 \begin{thm} \label{third main}
Assume $(\mathbf{A}^V)$,  $(\mathbf{A}^{\rho_0})$  and $K \in C^{\gamma}$, $\gamma \in \left(0,\frac{1}{2}\right)$. Let $T_{max}$ be the maximal existence time for (\ref{SPDE_Ito}) and fix $T \in (0,T_{max})$. In addition, let the dynamics of the particle system be given by (\ref{particles})   and  for any $m \ge 1$,		\begin{align*}
\sup_{N \in \mathbb{N}} \Big \| \mathcal{H}(\rho_0^N|\rho_0) \Big \|_{{L^m(\Omega)}} < \infty
\end{align*}
with $ d \geq 1$ and  $\beta \in \left(0,\frac{1}{2\big[1 + \frac{1}{d}\big]}\right) $.
Then
\begin{align*}
\left \| \sup_{t\in [0,T]}\mathcal{H}(\rho_t^N|\rho_t) \right \|_{L^m(\Omega)}  &\lesssim \Big \| \mathcal{H}(\rho_0^N|\rho_0) \Big \|_{L^m(\Omega)} +  N^{-\theta} \, \nonumber 
\end{align*}
where
\begin{align*}
\theta = \min \left(\beta(1-2\gamma);\frac{\beta}{d}\gamma^2;\frac{1}{2}-\beta\Big(1 + \frac{1 }{d}\Big) \right)
\end{align*}
\noindent and $\rho$ is the unique solution of SPDE \eqref{SPDE_Ito} with initial condition $\rho_0$.
\end{thm}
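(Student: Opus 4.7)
The plan is to establish an Itô-level inequality for $\mathcal{H}(\rho_t^N|\rho_t)$ and then close it in $L^m(\Omega)$ via Burkholder-Davis-Gundy and stochastic Gronwall, adapting the relative entropy technique already used in Theorems \ref{first main} and \ref{second main} to the case $K\in C^\gamma$. First I would derive, by Itô's formula applied to $V^N(x-X_t^{i,N})$ and averaging over $i$, an SPDE for the mollified empirical measure of the form
\begin{align*}
d\rho_t^N &= \tfrac{1}{2}\Delta \rho_t^N\,dt + \tfrac{1}{2} D^2 \rho_t^N : (\sigma_t\sigma_t^{\top})\,dt \\
&\quad - \nabla \cdot \bigl( V^N \ast \bigl[(K\ast\rho_t^N)\, S_t^N \bigr] \bigr)\,dt - \nabla \rho_t^N \cdot \sigma_t\,dB_t + dM_t^N,
\end{align*}
where $M_t^N$ is a distribution-valued martingale produced by the idiosyncratic noises $\{W^i\}_{i=1}^N$. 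Then I would apply the Itô chain rule to $\int \rho_t^N \ln(\rho_t^N/\rho_t)\,dx$ along the joint evolution of $(\rho_t^N,\rho_t)$, using that both equations are transported by the same $\sigma_t\,dB_t$: the shared transport cancels the first-order stochastic differentials and the mixed quadratic variations coming from the common noise, reducing the computation to its deterministic analogue plus a scalar real-valued martingale $\mathcal{M}_t^N$ obtained by pairing $M_t^N$ with $\ln(\rho_t^N/\rho_t)$.

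The diffusion terms then produce the Fisher-information dissipation $-\tfrac{1}{2}\mathcal{I}(\rho_t^N|\rho_t)$, while the drift difference decomposes into three contributions: (i) a commutator between mollification and the nonlinearity, $V^N\ast((K\ast\rho_t^N) S_t^N) - (K\ast\rho_t^N)\rho_t^N$, controlled by Taylor expansion of $V^N$ together with the $C^\gamma$ bound on $K$; (ii) a discretization/mollification error involving $K\ast(S_t^N-\rho_t^N)$, controlled via the Hölder regularity of $K$ and the mollifier scale $N^{-\beta/d}$; and (iii) a mean-field error $K\ast(\rho_t^N-\rho_t)$, controlled by Csiszár-Kullback-Pinsker in terms of $\sqrt{\mathcal{H}(\rho_t^N|\rho_t)}$ since $K$ is bounded. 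Young's inequality absorbs every gradient-type contribution into $\tfrac{1}{4}\mathcal{I}(\rho_t^N|\rho_t)$, producing
\begin{align*}
d\mathcal{H}(\rho_t^N|\rho_t) + \tfrac{1}{4}\mathcal{I}(\rho_t^N|\rho_t)\,dt \leq C\, \mathcal{H}(\rho_t^N|\rho_t)\,dt + C\, N^{-2\theta}\,dt + d\mathcal{M}_t^N,
\end{align*}
where $\theta$ equals the three-fold minimum announced in the statement, each scale corresponding to one of the three error sources above.

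To conclude, I would take $\sup_{t\in[0,T]}$ and then $L^m(\Omega)$ norms. The quadratic variation of $\mathcal{M}^N$ is proportional to $N^{-1}\int_0^{\cdot} \mathcal{I}(\rho_s^N|\rho_s)\,ds$ thanks to the $1/N$ averaging of the idiosyncratic noises; applying Burkholder-Davis-Gundy and then Young allows one to absorb this martingale contribution into the Fisher-information dissipation, after which stochastic Gronwall yields the claimed bound. The main obstacle is exactly this last step: in contrast with the $\mathbb{P}$-a.s. analysis of Theorem \ref{first main}, the $L^m$ bound requires a sharp BDG/Young absorption calibrated against the $1/N$ scaling of the individual noises, and this is what forces the constraint $\beta<1/(2(1+1/d))$. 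A secondary delicate point is the bookkeeping in the error decomposition: the commutator bound in (i) requires careful tracking of how the Hölder exponent $\gamma$ interacts with the moderate scaling $\beta$, since it is this interplay that determines whether the three candidate scales in $\theta$ are all strictly positive.
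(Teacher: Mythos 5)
Your overall architecture matches the paper's: Itô on $V^N(x-X_t^i)$, Itô on the pair $(\rho^N_t,\rho_t)$, cancellation of the common-noise transport and its mixed quadratic variation, Fisher dissipation from the Laplacian, decomposition of the drift error using $\|K\|_{C^\gamma}$, and closure by Gronwall in $L^m(\Omega)$. Two points, though, deserve scrutiny.

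First, a genuine omission. When you say the shared transport ``reduces the computation to its deterministic analogue plus a scalar real-valued martingale'', you are dropping the Itô correction coming from the quadratic variation of the \emph{idiosyncratic} noises $W^i$. In the paper this residual is exactly
\[
\frac{1}{2N^2}\sum_{i=1}^N\int_0^t\int_{\mathbb{T}^d}\frac{1}{\rho_s^N}\,|\nabla V^N(x-X_s^i)|^2\,dx\,ds\;\lesssim\;t\,N^{-1+\beta+2\beta/d},
\]
appearing as the term left over after the cancellation $II_t+IV_t$. It is not a martingale and not a drift-commutator term; it is a bias of order $N^{-(1-\beta-2\beta/d)}$. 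Because this exponent happens to dominate $\theta_3=\tfrac12-\beta(1+\tfrac1d)$ whenever $\beta<d/2$, omitting it does not change the final $\theta$ — but a proof that silently discards it is incomplete, and the exponent $1-\beta-2\beta/d$ must be compared against $\theta_1,\theta_2,\theta_3$ to justify the stated $\theta$.

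Second, you handle the martingale $\mathcal{M}^N$ differently from the paper, and you misattribute the origin of the $\beta$-constraint. The paper bounds $\mathbb{E}\sup_t|M_t^N|^m$ crudely, using $|\ln\rho_s^N-\ln\rho_s|\lesssim N^{2\beta/d}$ (from the pointwise bounds on $\rho^N$) and $\|\nabla V^N\|_{L^1}\sim N^{\beta/d}$, giving $\theta_3=\tfrac12-\beta(1+\tfrac1d)$; it is precisely the requirement $\theta_3>0$ that yields the constraint $\beta<\frac{1}{2(1+1/d)}$. Your approach instead observes that, by Jensen's inequality with $\int V^N=1$, the quadratic variation satisfies $[\mathcal{M}^N]_T\le N^{-1}\int_0^T\mathcal{I}(\rho_s^N|\rho_s)\,ds$, and then uses BDG plus a Young-type absorption into the retained Fisher dissipation. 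This is correct and in fact sharper — the martingale then contributes at order $N^{-1/2}$ with no $\beta$-dependence at all — but it therefore does \emph{not} produce the constraint $\beta<\frac{1}{2(1+1/d)}$. Under your route the binding restriction from the noise comes from the idiosyncratic bias term ($\beta<\frac{d}{d+2}$, a weaker condition), so the claim that ``this is what forces the constraint'' is not consistent with your own estimate; the constraint in the theorem is needed only to make the paper's $\theta_3$ positive, which your sharper martingale bound would not require. The statement as written is still implied by your argument (your $\theta$ is at least as large), but you should correct the bookkeeping: include the Itô bias term explicitly, and recognize that if you really do the QV--Fisher absorption you are proving a result at least as strong as the theorem, with the constraint on $\beta$ inherited from $\theta_3>0$ in the statement rather than from any step of your proof.
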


\begin{rem} We observe that, compared with  previous works on moderate particle systems 
without common noise,  we lose the order of convergence. This is because the estimation of the martingale term in our work is of order $\frac{1}{2}-\beta\Big(1 + \frac{1 }{d}\Big)$, while in  \cite{Josue}, \cite{Pisa} is the order  $\frac{1}{2}- \frac{\beta}{2}$, see Theorem 1.3 in  \cite{Pisa} and Theorem 1 in \cite{Josue}. For the same reason, the range of the parameter $\beta$  is smaller in our work. 
\end{rem}

\subsection{Applications} \label{example}
\textbf{Sub-Coulomb kernels.} The following example was explored in \cite{Carrillo2014} and \cite{quasibiot}.
Let
\begin{align}
    K(x_1,x_2) &\doteq   c\frac{(x_1,x_2)^{\perp}}{|(x_1,x_2)|^{1+\alpha}} + c \sum_{(k_1,k_2) \neq (0,0)} \frac{(x_1 -k_1,x_2 -k_2)^{\perp}}{|(x_1-k_1,x_2-k_2)|^{1+\alpha}}, \, \, \, (x_1,x_2) \in \mathbb{T}^2,\nonumber
\end{align}
with $\alpha \in (0,1)$ and $c$ is positive or negative constant. Now, we define
 \begin{align*}
 K_0(x_1,x_2)\doteq   \begin{bmatrix}
0 & -\frac{|(x_1,x_2)|^{1-\alpha}}{1-\alpha} + \psi_1  \\
\frac{|(x_1,x_2)|^{1-\alpha}}{1-\alpha} + \psi_2 & 0  \\
\end{bmatrix}, \, \, \, (x_1,x_2) \in \mathbb{T}^2
\end{align*}
\pagebreak

\noindent where $(\psi_1,\psi_2)$ are smooth corrections of periodization. So $K_0 \in C^{1-\alpha}$, 
$K(x_1,x_2) =  c \nabla \cdot K_0(x_1,x_2)$ and 
$\nabla\cdot K =0$. Then $K$ fulfills the assumption $(\mathbf{A}^{\nabla\cdot K})$. 

Also, given $f \in L^q$, $q > 2$, since $H^1_q \hookrightarrow C^{1- \frac{2}{q}}$
\begin{align} \label{quasi biot satisfies A grad K}
    \left\|K\ast f\right\|_{\infty} & \leq  \left\|K\ast f\right\|_{1-\frac{2}{q}}\overset{}{\leq} \left\|K\ast f\right\|_{1,q}\doteq \left\|K\ast f\right\|_{q} + \left\|\nabla (K\ast f\right)\|_{q}.
\end{align}
Now since 
\begin{align*}
    \left|K(x_1,x_2)\right|, \left|(x_1,x_2)\right| \left|\nabla K(x_1,x_2)\right| \leq \frac{C}{\left|(x_1,x_2)\right|^{\alpha}}, \, \, \, (x_1,x_2) \in \mathbb{T}^2,
\end{align*}
implies that $K, \nabla K \in L^{1}\left(\mathbb{T}^d\right)$, by convolution inequality,
\begin{align*}
    \left\|K\ast f\right\|_{q} + \left\|\nabla (K\ast f\right)\|_{q} \leq C\|f\|_q.
\end{align*}
From the estimate in $(\ref{quasi biot satisfies A grad K})$,  $K$ satisfies the assumption $(\mathbf{A}^{K})$. It follows that this singular kernel is covered by Theorems \ref{Teo_Krylov}, \ref{first main} and \ref{second main}, as well as their Corollaries.

\vspace{.5cm}
\label{example 1}
\noindent \textbf{Hausdorff measure kernel.} As noted in \cite{Gui} and \cite{Jabin_2018}, we can handle  singular kernels like measures. 

Let $C:\left[-\frac{1}{2},\frac{1}{2}\right] \to \left[-\frac{1}{2},\frac{1}{2}\right]$ be the standard Cantor function and consider

 \begin{align*}
 K_0(x_1,x_2)\doteq   \begin{bmatrix}
- x_2C(x_1) &0  \\
 0 & x_1C(x_2)  \\
\end{bmatrix}, \, \, \, (x_1,x_2)  \in \mathbb{T}^2.
\end{align*}

Since the distributional derivative of the Cantor function is the Hausdorff measure $\mu$ concentrated on the Cantor set, it is a finite measure and by design 

\begin{align*}
    K(x_1,x_2) &\doteq \nabla \cdot K_0(x_1,x_2) = (x_1,x_2)^{\perp} \mu, \, \, \, (x_1,x_2) \in \mathbb{T}^2.
   \end{align*}
   
   Thus, $K$ fulfills the assumption $(\mathbf{A}^{\nabla\cdot K})$, taking into account that the Cantor function is a Holder continuous function, with exponent $\frac{\ln{2}}{\ln{3}}$. Also, since $\mu$ is finite, it is of bounded variation, which means $\|K\|_{BV} < \infty$. Finally, by considering the solution to equation (\ref{SPDE_Ito}) with \(\sigma = 0\), as provided in \cite{Gui}, Theorem \ref{second main} becomes applicable to this singular kernel.
\pagebreak

\subsection{Definition of solution}

\begin{defn} A family of random functions $\left\{\rho_t(\omega): t \ge 0, \omega \in \Omega\right\}$ lying in $L_{\mathcal{F}^B}^q\left([0, T]; H_q^{1}\left(\mathbb{T}^d\right)\right) \cap S_{\mathcal{F}^B}^{\infty}\left([0, T] ; L^1 \cap L^q\left(\mathbb{T}^d\right)\right)$ is a solution to (\ref{SPDE_Ito}) if $\rho_t$ satisfies for all $\phi \in C^2\left(\mathbb{T}^d\right)$,

\begin{align}
\left\langle \rho_t, \phi\right\rangle &= \left\langle \rho_0, \phi\right\rangle+ \int_0^t\left\langle \rho_s K \ast \rho_s, \nabla \phi \right\rangle ds \nonumber \\
& \quad +\frac{1}{2} \int_0^t\Big\langle \rho_s, \sum_{i,j=1}^d \partial_{ij}\phi \,   \sum_{k=1}^d (\delta^{ik} \, \delta^{jk}+ \sigma_s^{ik} \, \sigma_s^{jk}) \Big\rangle \, ds \nonumber \\
&\quad +\int_0^t\Big\langle \rho_s, \sum_{i=1}^d \partial_i \phi\,  \sum_{k=1}^{d} \sigma_s^{ik} \Big\rangle \, d B_s^k , \, \, \, \mathbb{P}-a.s.  \label{weak_sol}
\end{align}
\end{defn}


\vspace{1 cm}

\section{Proofs of main results}

 This section is devoted to the proofs of our main results. In the subsection \ref{timeevolution}, we  derive an  evolution equation for the relative entropy functional of the regularized empirical measure with respect to the solution of the Fokker-Planck equation (\ref{SPDE_Ito}).  In the subsection \ref{fissher}, we identify   the dissipation terms associated with the Fisher information and we obtain some  estimates for the quadratic variation terms.  In the subsections \ref{nonlinearnholder} and \ref{nonlinearholder}, we address the nonlinear terms appearing in the evolution equation for the relative entropy functional, derived in the first step.
Finally, in subsection \ref{end of the proof}, we deal with the martingale terms in our computations.  Then, combining this with the previous estimates, Gronwall’s Lemma allows us to close the argument.

\subsection{Time evolution of the relative entropy} 
\label{timeevolution}

In this subsection we derive an evolution equation for the relative entropy functional, relating the regularized empirical measure and the solution of the Fokker-Planck equation (\ref{SPDE_Ito}), using the Itô's formula. Additionally, we  check that the relative entropy functional is well-defined (see  (\ref{weldef})).

Applying the It{\^o}'s formula with $V^N (x - \cdot)$ for each $i \in \{
1, \ldots, N \}$ in $(\ref{particles})$,  denoting $X_s^{i,N} = X_s^i$ and $W_s^{i,N}=W_s^i$, we have

\begin{align*}
     V^N (x - X_t^{i}) & = \, V^N (x - X_0^{i}) \\ 
     &- \sum_{j=1}^d\int_0^t\partial_jV^N (x - X_s^{i})       (K\ast \rho_s^N)_j (X_s^{i}) \, ds\\
     & + \frac{1}{2}\sum_{j,k=1}^d \int_0^t\partial_{j,k}V^N (x - X_s^{i}) 
     (\sigma \sigma^\top)_s^{j,k} \, ds\\
      & - \sum_{j,k=1}^d \int_0^t\partial_{j}V^N (x - X_s^{i})  
     \sigma_s^{j,k} \, d  B_s^k\\
     & \,  \, - \sum_{j=1}^d\int_0^t\partial_jV^N (x - X_s^{i})  \, d W_s^{i} \\
     &+ \frac{1}{2} \sum_{j=1}^d \int_0^t \partial_{jj} V^N (x - X_s^{i}) \, d s,
   \end{align*}
   with $(\sigma \sigma^\top)^{j,k}_s \doteq  \sum_{l = 1}^{d} \sigma^{jl}_s\sigma^{kl}_s$. Then since that $\rho^N = V^N \ast S^N$ we obtain 
   \begin{align}
    \rho_t^N(x) & = \, \rho_0^N(x) \nonumber \\
    &- \sum_{j=1}^d\int_{0}^{t}\left\langle S_{s}^{N}, \partial_jV^{N}(x-\cdot)\left( K\ast \rho_{s}^{N}\right)_j(\cdot)\right\rangle \, ds \nonumber\\
     & + \frac{1}{2}\sum_{j,k=1}^d\int_{0}^{t} \partial_{j,k} \rho_s^N(x) (\sigma \sigma^\top)_s^{j,k} \, ds \nonumber\\
      & -\sum_{j,k=1}^d \int_0^t \partial_{j}\rho_s^N(x)
     \sigma_s^{j,k} \, d  B_s^k\nonumber\\
     & \,  \, - \frac{1}{N}\sum_{i=1}^N\sum_{j=1}^d\int_0^t\partial_jV^N (x - X_s^{i})  \, d  W_s^{i} \nonumber\\
     & \,  \, + \frac{1}{2}\sum_{j=1}^d  \int_0^t \partial_{jj}\rho_s^N(x) \, ds . \label{Ndensity}
   \end{align}

In addition,  regarding the solution of the Fokker-Planck equation (\ref{SPDE_Ito}), since \( 3 > 2 + \frac{d}{q} \) for \( q > d \), Corollary \ref{coro Teo_Krylov} combined with Sobolev embedding, we find that \( \rho_t \in C^2(\mathbb{T}^d) \), $\mathbb{P}$-a.s., for all \( t \in [0, T] \). Then it is a classical  semimartingale and  verifies 
\begin{align}
    \rho_t(x) & = \, \rho_0(x) \nonumber\\
    &- \sum_{j=1}^d\int_{0}^{t} \partial_j\left[\rho_s(x) (K\ast \rho_{s})_j(x) \right] d s\nonumber\\
     & + \frac{1}{2}\sum_{j,k=1}^d\int_{0}^{t} \partial_{j,k}\rho_s(x) (\sigma \sigma^\top)_s^{j,k} d s\nonumber\\
      & - \sum_{j,k=1}^d \int_0^t\partial_{j}\rho_s(x)\sigma_s^{j,k}
        \, d  B_s^k\nonumber\\
         & \,  \, + \frac{1}{2}\sum_{j=1}^d  \int_0^t \partial_{jj} \rho_s(x) \, d s . \label{limit eq}
   \end{align}
   
Now by $(\ref{def Vn period})$, $ \rho_s^N(x)> 0$ 
  for a fixed $s \in [0,T]$  and $N \in \mathbb{N}$.
   So, due to the fact that $\rho_s^N$ is a positive and smooth function on $\mathbb{T}^d$, 
   by applying the Itô's formula 
   to (\ref{Ndensity}), we have
   
\begin{align} \label{itô to spde particle}
    \rho_t^N\ln{(\rho_t^N)} &=  \rho_0^N\ln(\rho_0^N)\nonumber\\
    &- \int_0^t\left(1+\ln(\rho_s^N) \right)\left\langle S_s^N, \nabla V^N(x-\cdot)K\ast \rho_s^N(\cdot)\right\rangle \,ds \nonumber \\
    & + \frac{1}{2}\int_{0}^{t}\left(1+\ln(\rho_s^N) \right) (\sigma \sigma^\top)_s D^2\rho_s^N \, ds \nonumber\\
    &+ \frac{1}{2}\int_0^t\left(1+\ln(\rho_s^N) \right)\Delta \rho_s^N \,ds \nonumber \\
    & - \int_0^t\left(1+\ln(\rho_s^N) \right)\sigma^{\top}_s \nabla \rho_s^N \,d B_s\nonumber\\
    &- \frac{1}{N}\sum_{i=1}^N \int_0^t\left(1+\ln(\rho_s^N) \right)\nabla V^N(x - X_s^i)\,dW^i_s\nonumber\\
     &+  \int_0^{t}\frac{1}{\rho_s^N}\left|\sigma^{\top}_s  \nabla \rho_s^N\right|^2 \, ds   \nonumber\\
    &+ \frac{1}{2N^2}\sum_{i=1}^N \int_0^t \frac{1}{\rho_s^N}\left|\nabla V^N(x - X_s^i)\right|^2 \,ds.
\end{align}
\pagebreak

  We have used above the following expressions for the quadratic variation terms. First, since our diffusion parameter $\sigma$ does not depends on the spacial variable we derive

\begin{align*}
&\left\langle -\sum_{j,k=1}^d \int_0^{\cdot}\left\langle S_s^N, \sigma_s^{j,k}\partial_{j}V^N (x - \cdot) 
      \right \rangle \, d  B_s^k\right\rangle_t\\ &=\left\langle \sum_{k=1}^d \int_0^{\cdot}\left\langle S_s^N, \sum_{j=1}^d (\sigma^{\top}_{s})^{k,j}\partial_{j}V^N (x - \cdot) 
     \right \rangle \, d  B_s^k\right\rangle_t \\
     &=\left\langle \int_0^{\cdot}\left\langle S_s^N, \sigma^{\top}_s  \nabla V^N (x - \cdot) 
     \right \rangle \, d  B_s\right\rangle_t \\
     &=\int_0^{t}\left|\left\langle S_s^N, \sigma^{\top}_s  \nabla V^N (x - \cdot) 
     \right \rangle \right|^2 \, ds \\
      &=\int_0^{t}\left|\sigma^{\top}_s \nabla\rho_s^N \right|^2 \, ds. 
   \end{align*}

\vspace{.7 cm}
   
   Additionally, we deduce 
   \begin{align*}
\left\langle - \frac{1}{N} \sum_{i=1}^N\sum_{j=1}^d\int_0^{\cdot}\partial_jV^N (x - X_s^{i}) \, d  W_s^{i}\right\rangle_t &=  \left\langle \frac{1}{N} \sum_{i=1}^N\int_0^{\cdot}\nabla V^N (x - X_s^{i}) \,  d  W_s^{i}\right\rangle_t
\\
&=  \frac{1}{N^2} \sum_{i=1}^N\left\langle \int_0^{\cdot}\nabla V^N (x - X_s^{i}) \, d  W_s^{i}\right\rangle_t
\\
 &= \frac{1}{N^2} \sum_{i=1}^N\int_0^t | \nabla V^N (x - X_s^{i}) |^2 \, ds. 
   \end{align*}

\vspace{1 cm}

We recall by Theorem \ref{Teo_Krylov}, Corollary \ref{coro Teo_Krylov} and Sobolev embedding, $\rho_t \in C^2\left(\mathbb{T}^d\right)$ $\mathbb{P}$-a.s., for all $t\in [0,T]$.
Then, taking into account (\ref{Ndensity}) and (\ref{limit eq}), since  $W^i$ and $B$ are independent Brownian motions, applying the Ito's formula 
to $(\rho_t^N, \rho_t)$, we get

\begin{align} \label{ito for product}
    \rho_t^N \ln(\rho_t) &= \rho_0^N\ln(\rho_0) \nonumber \\
    &- \int_0^t \ln(\rho_s)\left \langle S_s^N,\nabla V^N(x-\cdot)K\ast \rho_s^N(\cdot) \right\rangle \,ds \nonumber \\
     &- \int_0^t\frac{\rho_s^N}{\rho_s} \nabla \cdot (\rho_s (K \ast \rho_s)) \,ds \nonumber \\
    & + \frac{1}{2} \int_0^t\ln(\rho_s)\Delta \rho_s^N \,ds \nonumber\\
    & + \frac{1}{2} \int_0^t \frac{\rho_s^N}{\rho_s}\Delta \rho_s\,ds \nonumber\\
     & + \frac{1}{2}\int_{0}^{t} \ln(\rho_s) (\sigma \sigma^\top)_s D^2\rho_s^N  \,ds \nonumber\\
    & + \frac{1}{2}\int_{0}^{t} \frac{\rho_s^N}{\rho_s} D^2\rho_s(x) (\sigma \sigma^\top)_s \, d s\nonumber\\
    & -\int_0^t\ln(\rho_s)\sigma^{\top}_s \nabla \rho_s^N \, d  B_s\nonumber\\
    &- \frac{1}{N}\sum_{i=1}^N \int_0^t \ln(\rho_s) \nabla V^N(x - X_s^i) \,dW_s^i\nonumber\\
     & -\int_0^t\frac{\rho_s^N}{\rho_s} \sigma^{\top}_s\nabla \rho_s \, d  B_s\nonumber\\
     &+\int_0^t \frac{1}{\rho_s} \sigma^{\top}_s  \nabla \rho_s^N \sigma^{\top}_s\nabla \rho_s \, ds \nonumber\\
     &- \frac{1}{2}\int_0^t \frac{\rho_s^N}{(\rho_s)^2} \left|\sigma^{\top}_s \nabla \rho_s\right|^2 \, ds.
   \end{align}

\vspace{1 cm}

 Now we verify that the relative entropy functional is well-defined, namely  $|\mathcal{H}\left(\rho_t^N|\rho_t\right)| < \infty$, $\mathbb{P}$-a.s.,  for all $N \in \mathbb{N}$ and $t \in [0,T]$.

First we observe, for $\alpha \in \{1,2\}$ 
\begin{align*}
    |x - X_s^i| \leq 1 \, \implies -N^\frac{\alpha\beta}{d} \leq -\left| N^{\frac{\beta}{d}}(x - X_s^i)\right|^{\alpha},
\end{align*}
which implies
\begin{align}
 N^{\beta}\frac{\Gamma(d/2)}{2\pi^{d/2}}\exp\left(-N^{\frac{2\beta}{d}}\right) &\leq
    N^{\beta}\frac{\Gamma(d/2)}{2\pi^{d/2}}\exp\left(-N^{\frac{\alpha\beta}{d}}\right)\nonumber\\ &\leq \frac{1}{N} \sum_{i=1}^N N^{\beta}\frac{\Gamma(d/2)}{2\pi^{d/2}}\exp\left(-\left| N^{\frac{\beta}{d}}(x - X_s^i)\right|^{\alpha}\right)\nonumber\\
    &\leq \frac{1}{N} \sum_{i=1}^N N^{\beta}\frac{\Gamma(d/2)}{2\pi^{d/2}}\exp\left(-\left| N^{\frac{\beta}{d}}(x - X_s^i)\right|^{\alpha}\right)\nonumber\\
    &+\frac{1}{N} \sum_{i=1}^N \sum_{k\neq 0}N^{\beta}\frac{\Gamma(d/2)}{2\pi^{d/2}}\exp{(1/4)}\exp\left(-\left| N^{\frac{\beta}{d}}\left[(x - X_s^i)-k\right]\right|\right) \nonumber\\
    &\doteq A + B. \label{lower boound for ron}
    \end{align}
    Note that by $(\ref{def v0})$,  we have
    \begin{align}
        B=\frac{1}{N} \sum_{i=1}^N \sum_{k\neq 0}N^{\beta}V_0\left( N^{\frac{\beta}{d}}\left[(x - X_s^i)-k\right]\right), \label{B}
    \end{align}
    since $(x - X_s^i) \in \mathbb{T}^d$ implies for $k \neq 0$, $\left|(x - X_s^i)-k\right|\geq 1/2$, and then $\left|N^{\frac{\beta}{d}}\left[\left(x-X_s^i\right) - k \right]\right|\geq 1/2$.
    
    Concerning the term $A$, 
    by $(\ref{def v0})$,  if $\left| N^{\frac{\beta}{d}}(x - X_s^i)\right|> 1/2$ and $\alpha = 1$ in $(\ref{lower boound for ron})$, we obtain 
    \begin{align}
        A\leq\frac{1}{N} \sum_{i=1}^N N^{\beta}V_0\left( N^{\frac{\beta}{d}}\left[(x - X_s^i)\right]\right). \label{A1}
    \end{align}
    If $\left| N^{\frac{\beta}{d}}(x - X_s^i)\right|\leq 1/2$ and $\alpha = 2$, in $(\ref{lower boound for ron})$ we get
    \begin{align}
        A=\frac{1}{N} \sum_{i=1}^N N^{\beta}V_0\left( N^{\frac{\beta}{d}}\left[(x - X_s^i)\right]\right).\label{A11}
    \end{align}
 From  $(\ref{lower boound for ron})$,  $(\ref{B})$, $(\ref{A1})$, and  $(\ref{A11})$  we find 
\begin{align}
    N^{\beta}\frac{\Gamma(d/2)}{2\pi^{d/2}}\exp\left(-N^{\frac{2\beta}{d}}\right)&\leq \frac{2}{N} \sum_{i=1}^N V^N(x - X_s^i) = 2\rho_s^N(x). \label{lower regularized}
\end{align}
  Now, we observe that
 \begin{align}
       V^N(x - X_s^i) &\overset{(\ref{def Vn period})}{=} \sum_{k \in \mathbb{Z}^d} N^{\beta} V_0\left(N^{\frac{\beta}{d}}((x - X_s^i)-k)\right)\nonumber\\
       &= N^{\beta} V_0\left(N^{\frac{\beta}{d}}(x-X_s^i)\right) + \sum_{k \neq 0} N^{\beta} V_0\left(N^{\frac{\beta}{d}}((x - X_s^i)-k)\right)\nonumber\\
       &\overset{(\ref{def v0})+(\ref{decay of mollifier})}{\leq }N^{\beta}\exp{(1/4)} + N^{\beta}\sum_{k \neq0}\frac{C_d}{\left|N^{\frac{\beta}{d}}\right|^{2d}\left|\left((x - X_s^i) -k \right)\right|^{2d}} \nonumber\\
       &\leq 2 N^{\beta}\exp{(1/4)}C_d. \label{estimate vn}
   \end{align}
    
    By joining $(\ref{lower regularized})$ and $(\ref{estimate vn})$, we deduce
    
    \begin{align} 
    \frac{1}{2}N^{\beta}\frac{\Gamma(d/2)}{2\pi^{d/2}}\exp\left(-N^{\frac{2\beta}{d}}\right)&\leq \frac{1}{N} \sum_{i=1}^N V^N(x - X_s^i) \nonumber\\
    &=  \rho_s^N(x) \, \leq 2 N^{\beta}\exp{(1/4)}C_d.\label{bound for rhoN}
\end{align}

Thus, by $(\ref{bound for rhoN})$ we arrive at
\begin{align}
   -C_N \leq \ln{\left[ \frac{1}{2}N^{\beta}\frac{\Gamma(d/2)}{2\pi^{d/2}}\exp\left(-N^{\frac{2\beta}{d}}\right)\right]} \leq \ln{\left(\rho_s^N\right)} \leq \ln\left(2N^{\beta}\exp{(1/4)}C_d\right) \leq C_N, \label{entropy well defined}
\end{align}
with  $$C_N \doteq \left( \ln{(N^\beta)} -\ln{\left(\frac{\Gamma(d/2)}{2\pi^{d/2}}\right)} + N^{\frac{2\beta}{d}} + 2\ln(2) + 1/4 + C_d\right),$$
since that
$$\ln{\left(\frac{\Gamma(d/2)}{2\pi^{d/2}}\right)} \leq 0.$$
From  Theorem \ref{Teo_Krylov}, $\rho \in \left(\lambda^{-1},\lambda\right)$, $\lambda >1$, $\mathbb{P}$-a.s., then by (\ref{entropy well defined}) we obtain
\begin{align}
  \label{weldef}\left|\mathcal{H}\left(\rho_s^N|\rho_s\right)\right| &\leq \int_{\mathbb{T}^d}\left|\rho_s^N(x)\ln{\rho_s^N(x)}\right| \, dx + \int_{\mathbb{T}^d}\left|\rho_s^N(x)\ln{\rho_s(x)}\right| \, dx \nonumber\\
  &\overset{(\ref{estimate vn})}{\leq} 2N^{\beta}\exp{(1/4)}C_d\left(C_N + \ln{\lambda}\right) < \infty. 
\end{align}
Finally, from  (\ref{itô to spde particle}) and (\ref{ito for product}), we have the following identity
\begin{align}
    \mathcal{H}(\rho_t^N|\rho_t)- \mathcal{H}(\rho_0^N|\rho_0) 
    &= \int_{\mathbb{T}^d} \int_0^t \left[\ln(\rho_s) - \ln(\rho_s^N)\right] \left\langle S_s^N, \nabla V^N(x - \cdot)K\ast \rho_s^N(\cdot)\right\rangle \,ds \,dx \nonumber \\
    &+ \int_{\mathbb{T}^d} \int_0^t \left[-\left\langle S_s^N, \nabla V^N(x - \cdot) K \ast \rho_s^N(\cdot) \right\rangle + \frac{\rho_s^N}{\rho_s}\nabla\cdot(\rho_s (K\ast \rho_s)) \right] \,ds\,dx \nonumber \\
    &+ \frac{1}{2} \int_{\mathbb{T}^d} \int_0^t \left[\ln(\rho_s^N) - \ln(\rho_s)\right] (\sigma \sigma^\top)_sD^2 \rho_s^N \,ds\,dx\nonumber  \\
     &+ \frac{1}{2} \int_{\mathbb{T}^d} \int_0^t (\sigma \sigma^\top)_s D^2 \rho_s^N \,ds\,dx - \frac{1}{2} \int_{\mathbb{T}^d}\int_{0}^{t} \frac{\rho_s^N}{\rho_s} D^2\rho_s (\sigma \sigma^\top)_s \, d s \, dx\nonumber\\
    &+ \frac{1}{2} \int_{\mathbb{T}^d} \int_0^t \left[\ln(\rho_s^N) - \ln(\rho_s)\right] \Delta \rho_s^N \,ds\,dx \nonumber  \\
    &+ \frac{1}{2} \int_{\mathbb{T}^d} \int_0^t \left[-\frac{\rho_s^N}{\rho_s} \Delta \rho_s + \Delta \rho_s^N\right] \,ds \,dx \nonumber \\
    & +\int_{\mathbb{T}^d}\int_0^t\left[\ln(\rho_s) -\ln(\rho_s^N)\right]\sigma^{\top}_s \nabla \rho_s^N \, d  B_s \, dx\nonumber\\
       & +\int_{\mathbb{T}^d}\int_0^t\sigma^{\top}_s \nabla \rho_s^N \, d  B_s \, dx \nonumber\\
     &+ \frac{1}{N} \sum_{i=1}^N \int_{\mathbb{T}^d} \int_0^t \left[\ln(\rho_s) - \ln(\rho_s^N)\right]\nabla V^N(x - X_s^i)\,dW^i_s \,dx \nonumber \\
    &- \frac{1}{N} \sum_{i=1}^N \int_{\mathbb{T}^d}\int_0^t \nabla V^N(x - X_s^i)\,dW^i_s \,dx  \nonumber \\
     & + \int_{\mathbb{T}^d} \int_0^t\frac{\rho_s^N}{\rho_s} \sigma^{\top}_s\nabla \rho_s \, d  B_s \, dx  \nonumber\\
     &+ \frac{1}{2}\int_{\mathbb{T}^d}\int_0^{t}\frac{1}{\rho_s^N}\left|\sigma^{\top}_s \nabla \rho_s \right|^2 \, ds \, dx\nonumber\\
    &+ \frac{1}{2N^2}\sum_{i=1}^N \int_{\mathbb{T}^d} \int_0^t \frac{1}{\rho_s^N}|\nabla V^N(x - X_s^i)|^2 \,ds\,dx\nonumber \\
     &- \int_{\mathbb{T}^d}\int_0^t \frac{1}{\rho_s} \sigma^{\top}_s  \nabla \rho_s^N \sigma^{\top}_s\nabla \rho_s \, ds \, dx\nonumber\\
     &+ \frac{1}{2}\int_{\mathbb{T}^d}\int_0^t \frac{\rho_s^N}{(\rho_s)^2} \left|\sigma^{\top}_s \nabla \rho_s\right|^2 \, ds \, dx\nonumber\\ 
     &\doteq \sum_{l=1}^{15} R_t^l  \nonumber\\
     &= I_t + II_t + III_t + M_t^N + IV_t,\label{ito at entropy}
\end{align}
where 

\begin{align*}
    &I_t \doteq R_t^1 + R_t^2\\
    &II_t \doteq R_t^3 + R_t^4 \\
     &III_t \doteq R_t^5 + R_t^6 \\
      &M^N_t \doteq R_t^7 + R_t^8 + R_t^9 +R_t^{10} + R_t^{11} \\
       &IV_t \doteq R_t^{12} + R_t^{13} + R_t^{14} + R_t^{15}. \\
\end{align*}

\vspace{.7 cm}

\subsection{Estimates for $II_t$, $III_t$ and $IV_t$} \label{fissher}

 Now, we identify the dissipation that allows us to control the nonlinear terms and derive estimates for the quadratic variations arising from Itô’s formula, using the  property (\ref{mollifier inequality}) of the mollifier $V_0$.

 We begin by determining the dissipation terms related with the Fisher information. 

Recall that $(\sigma \sigma^\top)_s\doteq \sum_{l=1}^d\sigma_s^{jl}\sigma_s^{kl}$. By Fubini's theorem and periodic boundary conditions, we have 

\begin{align*}
    II_t &\doteq 
     \frac{1}{2} \sum_{j,k=1}^d \int_0^t\int_{\mathbb{T}^d}  \left[\ln(\rho_s^N) - \ln(\rho_s)\right]  \partial_{j,k}\rho_s^N (\sigma \sigma^\top)_s^{j,k} \, \,dx \, ds \nonumber  \\
     &+ \frac{1}{2} \int_0^t (\sigma \sigma^\top)_s \underbrace{\int_{\mathbb{T}^d}   D^2 \rho_s^N \,dx}_{=0}\,ds - \frac{1}{2}\sum_{j,k=1}^d \int_{0}^{t} \int_{\mathbb{T}^d} \frac{\rho_s^N}{\rho_s} \partial_{j,k}\rho_s (\sigma \sigma^\top)^{j,k}_s \, d x \, ds\\
      &= \frac{1}{2} \sum_{j,k=1}^d \int_0^t\int_{\mathbb{T}^d}  \left[\ln(\rho_s^N) - \ln(\rho_s)\right]\partial_{j,k}\rho_s^N \sum_{l=1}^d\sigma_s^{jl}\sigma_s^{kl} \,dx \, ds\nonumber  \\
     & - \frac{1}{2}\sum_{j,k=1}^d \int_{0}^{t} \int_{\mathbb{T}^d} \frac{\rho_s^N}{\rho_s} \partial_{j,k}\rho_s \sum_{l=1}^d\sigma_s^{jl}\sigma_s^{kl} \, d x \, ds.\\
\end{align*}

Now by integration by parts and Leibnz's rule, we get
\begin{align*}
    II_t &=  -\frac{1}{2} \sum_{j,k,l=1}^d \int_0^t\int_{\mathbb{T}^d}  \partial_j\left[\ln(\rho_s^N) - \ln(\rho_s)\right] \partial_{k}\rho_s^N \sigma_s^{jl}\sigma_s^{kl}\,dx \, ds\nonumber  \\
     & + \frac{1}{2}\sum_{j,k,l=1}^d \int_{0}^{t} \int_{\mathbb{T}^d}\partial_j\left( \frac{\rho_s^N}{\rho_s}\right) \partial_{k}\rho_s \sigma_s^{jl}\sigma_s^{kl} \, d x \, ds\\
     &=  -\frac{1}{2} \sum_{j,k,l=1}^d \int_0^t\int_{\mathbb{T}^d}  \partial_j\left[\ln(\rho_s^N) \right] \partial_{k}\rho_s^N \sigma_s^{jl}\sigma_s^{kl}\,dx \, ds \nonumber  \\
     &+\frac{1}{2} \sum_{j,k,l=1}^d \int_0^t\int_{\mathbb{T}^d}  \partial_j\left[ \ln(\rho_s)\right] \partial_{k}\rho_s^N \sigma_s^{jl}\sigma_s^{kl}\,dx \, ds \nonumber  \\
     & + \frac{1}{2}\sum_{j,k,l=1}^d \int_{0}^{t} \int_{\mathbb{T}^d}\left( \frac{\partial_j\rho_s^N\rho_s - \rho_s^N \partial_j \rho_s}{(\rho_s)^2}\right) \partial_{k}\rho_s \sigma_s^{jl}\sigma_s^{kl} \, dx \, ds.
\end{align*}

Thus, we find 
\begin{align*}
    II_t &= -\frac{1}{2} \sum_{j,k,l=1}^d \int_0^t\int_{\mathbb{T}^d}  \frac{\partial_j \rho_s^N}{\rho_s^N} \partial_{k}\rho_s^N \sigma_s^{jl}\sigma_s^{kl}\,dx ds \nonumber  \\
     &+\frac{1}{2} \sum_{j,k,l=1}^d \int_0^t\int_{\mathbb{T}^d}  \frac{\partial_j \rho_s}{\rho_s} \partial_{k}\rho_s^N \sigma_s^{jl}\sigma_s^{kl}\,dx ds\nonumber  \\
     & + \frac{1}{2}\sum_{j,k,l=1}^d \int_{0}^{t} \int_{\mathbb{T}^d}\left( \frac{\partial_j\rho_s^N }{\rho_s}\right) \partial_{k}\rho_s \sigma_s^{jl}\sigma_s^{kl} d x ds\\
     & - \frac{1}{2}\sum_{j,k,l=1}^d \int_{0}^{t} \int_{\mathbb{T}^d}\left( \frac{ \rho_s^N \partial_j \rho_s}{(\rho_s)^2}\right) \partial_{k}\rho_s \sigma_s^{jl}\sigma_s^{kl} d x ds\\    
     &= -\frac{1}{2}  \int_0^t\int_{\mathbb{T}^d} \frac{\left|\sigma^{\top}_s \nabla \rho_s^N\right|^2}{\rho_s^N} \, dx ds \nonumber  \\
     &+\int_0^t\int_{\mathbb{T}^d}  \frac{\left(\sigma^{\top}_s\nabla \rho_s^N\right)\left(\sigma^{\top}_s\nabla \rho_s\right)}{\rho_s}\,dx ds \nonumber  \\
     & - \frac{1}{2} \int_{0}^{t} \int_{\mathbb{T}^d} \rho_s^N\left( \frac{ \left|\sigma^{\top}_s\nabla \rho_s\right|^2}{(\rho_s)^2}\right) \, d x ds\\ 
     \end{align*}
     where in the last equality we used that 
\begin{align*}
|\sigma_s^{\top} \nabla f|^2 &=\sum_{l}    \Big|\sum_{j} \partial_jf\sigma^{jl}_s\Big|^2 = \sum_{jkl}   \partial_jf   \, ( \sigma^{jl}_s\sigma^{kl}_s) \, \partial_{k} f,
\end{align*}
and 
\begin{align*}
\left(\sigma_s^{\top} \nabla f\right)\left(\sigma_s^{\top} \nabla g\right)&=\sum_{l}\left( \left(\sum_{j} \partial_jf\sigma^{jl}_s\right)    \left(\sum_{j} \partial_jg\sigma^{jl}_s\right) \right)\\
&= \sum_{jkl}   \partial_jf   \, ( \sigma^{jl}_s\sigma^{kl}_s) \, \partial_{k} g,
\end{align*}
for $f,g \in H_2^1$.

 It follows that, 
\begin{align}
     II_t&= - \frac{1}{2} \int_0^t \int_{\mathbb{T}^d} \rho_s^N \left( \frac{\left|\sigma^{\top}_s\nabla \rho_s^N\right|^2}{(\rho_s^N)^2} -2  \frac{\left(\sigma^{\top}_s\nabla \rho_s \right)}{\rho_s} \frac{\left(\sigma^{\top}_s\nabla \rho_s^N \right)}{\rho_s^N}  + \frac{\left|\sigma^{\top}_s\nabla \rho_s\right|^2}{\rho_s^2} \right) \,dx \,ds
     \nonumber\\
    &= -\frac{1}{2} \int_0^t \int_{\mathbb{T}^d} \rho_s^N \left|\frac{\left(\sigma^{\top}_s\nabla \rho_s^N\right)}{\rho_s^N} - \frac{\left(\sigma^{\top}_s\nabla \rho_s\right)}{\rho_s}  \right|^2 \,dx \,ds
    \nonumber\\
    &= -\frac{1}{2} \int_0^t \int_{\mathbb{T}^d} \rho_s^N \left|\frac{\rho_s}{\rho_s^N}\left[\frac{\left(\sigma^{\top}_s\nabla \rho_s^N\right) \rho_s - \rho_s^N \left(\sigma^{\top}_s\nabla \rho_s\right)}{\rho_s^2}\right]\right|^2 \,dx \,ds
     \nonumber\\
    &= - \frac{1}{2} \int_0^t \int_{\mathbb{T}^d} \rho_s^N \left|\sigma^{\top}_s\frac{\rho_s}{\rho_s^N}\nabla \left(\frac{\rho_s^N}{\rho_s}\right)\right|^2 \,dx \,ds
    \nonumber\\
    &= -\frac{1}{2} \int_0^t \int_{\mathbb{T}^d} \rho_s^N \left|\sigma^{\top}_s\nabla \ln \left(\frac{\rho_s^N}{\rho_s}\right)\right|^2 \,dx \,ds. \label{II_t add}
\end{align}
Thus,  recalling (\ref{ito at entropy}) and taking $\sigma = I$ in $(\ref{II_t add})$,  we get
\begin{align}
    III_t &= 
   -\frac{1}{2} \int_0^t \int_{\mathbb{T}^d} \rho_s^N \left|\nabla \ln \left(\frac{\rho_s^N}{\rho_s}\right)\right|^2 \,dx \,ds \nonumber\\
    &= -\frac{1}{2} \int_0^t\mathcal{I}\left(\rho_s^N|\rho_s\right) \,ds, \label{II_t}
\end{align}
by definition of Fisher information.

Next, we  estimate the quadratic variation terms.
Since $\nabla \ln{f} = \frac{\nabla f}{f}$, for $f > 0$, we have 
\begin{align*}
     IV_t &\doteq  \int_{\mathbb{T}^d}\int_0^{t}\frac{1}{2}\frac{1}{\rho_s^N}\left|\sigma^{\top}_s\nabla \rho_s^N \right|^2 \, ds \,dx\nonumber\\
    &+ \frac{1}{2N^2}\sum_{i=1}^N \int_{\mathbb{T}^d} \int_0^t \frac{1}{\rho_s^N}|\nabla V^N(x - X_s^i)|^2 \,ds\,dx\nonumber \\
     &- \int_{\mathbb{T}^d}\int_0^t \frac{1}{\rho_s}  \sigma^{\top}_s\nabla \rho_s^N\sigma^{\top}_s\nabla \rho_s \, ds \, dx\nonumber\\
     &+ \int_{\mathbb{T}^d}\int_0^t \frac{1}{2}\frac{\rho_s^N}{(\rho_s)^2} \left|\sigma^{\top}_s \nabla \rho_s\right|^2 \, ds \, dx\nonumber\\
      &\overset{}{=} \int_{\mathbb{T}^d}\int_0^{t}\frac{\rho_s^N}{2}\left|\sigma^{\top}_s\nabla \ln(\rho_s^N) \right|^2 \, ds \, dx\nonumber\\
    &+ \frac{1}{2N^2}\sum_{i=1}^N \int_{\mathbb{T}^d} \int_0^t \frac{1}{\rho_s^N}|\nabla V^N(x - X_s^i)|^2 \,ds\,dx\nonumber \\
     &- \int_{\mathbb{T}^d}\int_0^t \rho_s^N  \sigma^{\top}_s\nabla \ln(\rho_s^N)\sigma^{\top}_s\nabla \ln(\rho_s) \, ds \, dx\nonumber\\
     &+ \int_{\mathbb{T}^d}\int_0^t \frac{\rho_s^N}{2} \left|\sigma^{\top}_s \nabla \ln( \rho_s)\right|^2 \, ds \, dx.\nonumber\\
\end{align*}

We observe that 
\begin{align*}
     \frac{\rho_s^N}{2}\left|\sigma^{\top}_s \nabla \ln\left(\frac{\rho_s^N}{\rho_s}\right)\right|^2 &\overset{}{=} \frac{\rho_s^N}{2}\left|\sigma^{\top}_s\nabla \ln(\rho_s^N) \right|^2 \,\nonumber\\
    &-  \rho_s^N  \sigma^{\top}_s\nabla \ln(\rho_s^N)\sigma^{\top}_s\nabla \ln(\rho_s) \, \nonumber\\
     &+  \frac{\rho_s^N}{2} \left|\sigma^{\top}_s \nabla \ln( \rho_s)\right|^2, \,\nonumber\\
\end{align*}

which implies
\begin{align}
    IV_t &=   \frac{1}{2}\int_{\mathbb{T}^d} \int_0^t \rho_s^N \left|\sigma^{\top}_s \nabla \ln\left(\frac{\rho_s^N}{\rho_s}\right)\right|^2 \,ds \, dx\nonumber\\
    &+ \frac{1}{2N^2}\sum_{i=1}^N \int_{\mathbb{T}^d} \int_0^t \frac{1}{\rho_s^N}|\nabla V^N(x - X_s^i)|^2 \,ds\,dx. \label{IV_t add}
\end{align}

Then by $(\ref{II_t add})$ and $(\ref{IV_t add})$, we obtain 
\begin{align}
     II_t + IV_t &= \frac{1}{2N^2}\sum_{i=1}^N \int_{\mathbb{T}^d} \int_0^t \frac{1}{\rho_s^N}|\nabla V^N(x - X_s^i)|^2 \,ds\,dx.\label{II + IV}
   \end{align}
   Regarding the gradient term  in (\ref{II + IV}), by  $(\ref{def Vn period})$ we get
   \begin{align*}
       \left|\nabla V^N(x - X_s^i)\right| &\overset{(\ref{def Vn period})}{\leq} \sum_{k \in \mathbb{Z}^d} \left|\nabla V_0^N\left((x - X_s^i)-k\right)\right|\\
       &= \sum_{k \in \mathbb{Z}^d} \left|N^{\frac{\beta}{d}} N^{\beta}(\nabla V_0)\left(N^{\frac{\beta}{d}}((x - X_s^i)-k)\right)\right|\\
       &= N^{\frac{\beta}{d}}\sum_{k \in \mathbb{Z}^d} N^{\beta}\left|(\nabla V_0)\left(N^{\frac{\beta}{d}}((x - X_s^i)-k)\right)\right|.
       \end{align*}
       Now we use the estimate  $|\nabla V_0| \leq C_d V_0$ given in (\ref{mollifier inequality}), which yields
       \begin{align}
        \left|\nabla V^N(x - X_s^i)\right| &\overset{(\ref{mollifier inequality})}{\leq} C_d N^{\frac{\beta}{d}} \sum_{k \in \mathbb{Z}^d} N^{\beta}\left|V_0\left(N^{\frac{\beta}{d}}((x - X_s^i)-k)\right)\right|\nonumber\nonumber\\
        &\overset{(\ref{def Vn period})}{=} C_d N^{\frac{\beta}{d}} V^N(x - X_s^i) \label{estimate grad moll}\\
        &\overset{(\ref{estimate vn})}{\leq} 2 N^{\beta}\exp{(1/4)}C_dN^{\frac{\beta}{d}}. \label{estimate grad mollifier period}
   \end{align}
   
Finally, from  $(\ref{II + IV})$,  $(\ref{estimate grad moll})$ and   $(\ref{estimate grad mollifier period})$ we deduce 
\begin{align}
    II_t + IV_t &= \frac{1}{2N^2}\sum_{i=1}^N \int_{\mathbb{T}^d} \int_0^t \frac{1}{\rho_s^N(x)}|\nabla V^N(x - X_s^i)|^2 \,ds\,dx\nonumber\\
    &\overset{(\ref{estimate grad mollifier period})}{\leq} C_d\frac{2N^{\beta}N^{\frac{\beta}{d}}}{2N^2}\sum_{i=1}^N \int_{\mathbb{T}^d} \int_0^t \frac{1}{\rho_s^N(x)}|\nabla V^N(x - X_s^i)| \,ds\,dx\nonumber\\
       &\overset{(\ref{estimate grad moll})}{\leq }C_d\frac{N^{\beta+2\frac{\beta}{d}}}{N}  \int_0^t \int_{\mathbb{T}^d} \frac{1}{\rho_s^N(x)}  \frac{1}{N}\sum_{i=1}^N V^N(x - X_s^i)\,dx\,ds\nonumber\\
       &\overset{}{ =}C_d\frac{N^{\beta+2\frac{\beta}{d}}}{N}  \int_0^t \int_{\mathbb{T}^d} \frac{1}{\rho_s^N(x)}  \rho_s^N(x)\,dx\,ds\nonumber\\
    &\leq C_d tN^{-1+\beta + 2\frac{\beta}{d}}.\label{cancelations}
\end{align}

\subsection{Estimates for $I_t$ if $K$ fulfills $(\mathbf{A}^{K})$ and $(\mathbf{A}^{\nabla\cdot K})$} \label{nonlinearnholder}

We now address the nonlinear terms in (\ref{ito at entropy}). The main idea is to use the  dissipation obtained in the previous subsection along with the assumption $(\mathbf{A}^{\nabla\cdot K})$, standard information inequalities, and the decay property (\ref{decay of mollifier}) of mollifier \(V_0\), to deal with the singularity of kernel $K$.

By integration by parts, we obtain  
\begin{align}
    I_t &\doteq 
     \int_0^t\int_{\mathbb{T}^d}  \left[\ln(\rho_s) - \ln(\rho_s^N)\right] \left\langle S_s^N, \nabla V^N(x - \cdot)K\ast \rho_s^N(\cdot)\right\rangle  \,dx \,ds\nonumber \\
    &+ \int_0^t\int_{\mathbb{T}^d}  \left[-\left\langle S_s^N, \nabla V^N(x - \cdot) K \ast \rho_s^N(\cdot) \right\rangle + \frac{\rho_s^N}{\rho_s}\nabla\cdot(\rho_s (K\ast \rho_s)) \right] \,dx \,ds \nonumber\\
    &=
     -\int_0^t\int_{\mathbb{T}^d} \nabla \ln\left(\frac{\rho_s}{\rho_s^N} \right) \left\langle S_s^N, V^N(x - \cdot)K\ast \rho_s^N(\cdot)\right\rangle  \,dx \,ds\nonumber \\
    &+ \int_0^t\underbrace{\int_{\mathbb{T}^d}  -\nabla \cdot \left\langle S_s^N,  V^N(x - \cdot) K \ast \rho_s^N(\cdot) \right\rangle \,dx}_{=0} \,ds
    \nonumber \\
    &+ \int_0^t\int_{\mathbb{T}^d}  \frac{\rho_s^N}{\rho_s}\nabla \cdot (\rho_s (K\ast \rho_s)) \,dx \,ds.
    \label{drift}
    \end{align}

   We observe that 
\begin{align*}
    \int_0^t\int_{\mathbb{T}^d}  \frac{\rho_s^N}{\rho_s}\nabla\cdot (\rho_s (K\ast \rho_s)) \,dx \,ds&=  - \int_0^t\int_{\mathbb{T}^d}  \nabla \left(\frac{\rho_s^N}{\rho_s}\right)(\rho_s K\ast \rho_s) \,dx \,ds\\
    &= - \int_0^t\int_{\mathbb{T}^d} \frac{\rho_s^N}{\rho_s^N} \nabla \left(\frac{\rho_s^N}{\rho_s}\right)(\rho_s K\ast \rho_s) \,dx \,ds\\
    &= - \int_0^t\int_{\mathbb{T}^d} \rho_s^N \nabla \ln{\left(\frac{\rho_s^N}{\rho_s}\right)}( K\ast \rho_s) \,dx \,ds.
\end{align*}
So, by subtracting and adding the term \(\rho_s^N(x)\), and noting that \(\nabla \ln{f} = \frac{\nabla f}{f}\) for \(f > 0\), applying integration by parts to equation \((\ref{drift})\) yields:
\begin{align}
    I_t &\overset{}{=}
     \int_0^t\int_{\mathbb{T}^d} \rho_s^N\nabla \ln\left(\frac{\rho_s}{\rho_s^N} \right)[K\ast \rho_s -  K\ast \rho_s^N] \,dx \,ds  \nonumber\\
     &-
     \int_0^t\int_{\mathbb{T}^d} \nabla \ln\left(\frac{\rho_s}{\rho_s^N} \right) \left\langle S_s^N, V^N(x - \cdot)[K\ast \rho_s^N(\cdot) - K\ast \rho_s^N(x)]\right\rangle \,dx \,ds  \nonumber\\
     &\doteq I^1_t - I^2_t. \label{only Holder}
\end{align}

Now we will derive an estimate for $I^1_t$. By assumption $(\mathbf{A}^{\nabla\cdot K})$, $K = \nabla\cdot K_0$ with $K_0 \in L^{\infty}$ and $\nabla\cdot K = 0$, and then { by integration by parts along with $\nabla \ln{f} = \frac{\nabla f}{f}$, $f > 0$, we obtain 

\begin{align*}
    I^1_t &\doteq \int_0^t\int_{\mathbb{T}^d} \rho_s^N\nabla \ln(\rho_s)[K\ast \rho_s -  K\ast \rho_s^N] \,dx \,ds \\
    &- \int_0^t\underbrace{\int_{\mathbb{T}^d} \rho_s^N\nabla \ln(\rho_s^N)[K\ast \rho_s  -  K\ast \rho_s^N]\,dx}_{=0 } \,ds\\
    &\overset{}{=}\int_0^t\int_{\mathbb{T}^d} \rho_s^N\nabla \ln(\rho_s)[\nabla \cdot K_0\ast \rho_s -  \nabla \cdot K_0\ast \rho_s^N] \,dx \,ds\\
     &= -\int_0^t\int_{\mathbb{T}^d} \nabla \left(\rho_s^N\nabla \ln(\rho_s) \right)[ K_0\ast \rho_s -  K_0\ast \rho_s^N]\,dx \,ds.
\end{align*}

Regarding the term in the last integral involving the gradient, by Leibnz's rule we have
\begin{align*}
    \nabla \left(\rho_s^N \nabla \ln(\rho_s)\right) &= \nabla \left(\rho_s^N \frac{\nabla \rho_s}{\rho_s}\right)\\
    &=\nabla \rho_s^N \frac{\nabla \rho_s}{\rho_s} + \rho_s^N \left[\frac{\nabla^2\rho_s \rho_s - \nabla \rho_s \nabla \rho_s}{\rho_s^2}\right]
    \\
    &=\nabla \rho_s^N \frac{\nabla \rho_s}{\rho_s} + \rho_s^N \left[\frac{\nabla^2\rho_s}{\rho_s}  -\frac{ |\nabla \rho_s|^2}{\rho_s^2}\right]
     \\
    &=\rho_s^N \frac{\nabla^2\rho_s}{\rho_s} +  \left[\nabla \rho_s^N \frac{\nabla \rho_s}{\rho_s}  -\frac{ |\nabla \rho_s|^2\rho_s^N}{\rho_s^2}\right]
    \\
    &=\rho_s^N \frac{\nabla^2\rho_s}{\rho_s} +  \nabla \rho_s\left[ \frac{\nabla \rho_s^N}{\rho_s}  -\frac{ \nabla \rho_s \rho_s^N}{\rho_s^2}\right]
    \\
    &=\rho_s^N \frac{\nabla^2\rho_s}{\rho_s} +  \nabla \rho_s\left[ \frac{\nabla \rho_s^N \rho_s - \rho_s^N \nabla \rho_s}{\rho_s^2}  \right]
    \\
    &= \left[\rho_s^N \frac{\nabla^2\rho_s}{\rho_s} + \nabla \rho_s \nabla \left(\frac{\rho_s^N}{\rho_s}\right) \right].
\end{align*}

So we get 

\begin{align}
    I^1_t&= -\int_0^t\int_{\mathbb{T}^d} \left[\rho_s^N \frac{\nabla^2\rho_s}{\rho_s}  \right][ K_0\ast \rho_s -   K_0\ast \rho_s^N] \,dx \,ds \nonumber\\
    & -\int_0^t\int_{\mathbb{T}^d} \left[ \nabla \rho_s \nabla \left(\frac{\rho_s^N}{\rho_s}\right) \right][ K_0\ast \rho_s -  K_0\ast \rho_s^N] \,dx \,ds \nonumber\\
    &\doteq - I^{1,1}_t - I^{1,2}_t. \label{It1,1,2}
\end{align}

Now by $\epsilon$-Young inequality and convolution inequality (recall that $K_0 \in L^{\infty}$), together with Theorem \ref{Teo_Krylov}, we deduce

\begin{align}
    I_t^{1,2} &=\int_0^t\int_{\mathbb{T}^d} \left[ \nabla \rho_s \left(\frac{\sqrt{\rho_s^N}}{\rho_s}\right) \left(\frac{\rho_s}{\sqrt{\rho_s^N}}\right) \nabla \left(\frac{\rho_s^N}{\rho_s}\right) \right][ K_0\ast \rho_s -  K_0\ast \rho_s^N] \,dx \,ds \nonumber \\
    &\leq \epsilon \int_0^t \int_{\mathbb{T}^d} \frac{\rho_s^2}{\rho_s^N}\left|\nabla \left(\frac{\rho_s^N}{\rho_s}\right)\right|^2\, dx \, ds + C_{\epsilon}\int_0^t \int_{\mathbb{T}^d} \frac{|\nabla \rho_s|^2}{\rho_s^2}\rho_s^N\left|K_0 \ast(\rho_s - \rho_s^N)\right|^2 \, dx \,ds \nonumber \\
    & \leq \epsilon  \int_0^t \int_{\mathbb{T}^d} \rho_s^N \left|\nabla \ln \left(\frac{\rho_s^N}{\rho_s}\right)\right|^2\, dx \, ds + C_{\epsilon} \lambda^2\|K_0\|_{\infty}^2\int_0^t \|\nabla \rho_s\|^2_{\infty} \|\rho_s - \rho_s^N\|_1^2  \,ds
    \nonumber\\
    & \leq \epsilon  \int_0^t \mathcal{I}(\rho_s^N|\rho_s) \,ds + CC_{\epsilon} \lambda^2 \|K_0\|_{\infty}^2 \int_0^t \|\nabla \rho_s\|^2_{\infty} \mathcal{H}(\rho_s^N|\rho_s) \,ds \label{It12}
\end{align}
where in the last inequality  we have used  the Lemma \ref{L1entropyfisherinformation inequality} in Appendix C.

In addition, since $ K=\nabla\cdot K_0$ and $\nabla\cdot K =0$, by subtract and add the term $\rho_s$ and convolution inequality,  we have by integration by parts 
\begin{align*} 
    I_t^{1,1}&\overset{}{\doteq} \int_0^t\int_{\mathbb{T}^d} \left[\rho_s^N \frac{\nabla^2\rho_s}{\rho_s}  \right][ K_0\ast \rho_s -   K_0\ast \rho_s^N] \,dx \,ds\\
    &=\int_0^t\int_{\mathbb{T}^d} \left[\rho_s^N - \rho_s\right]\left[\frac{\nabla^2\rho_s}{\rho_s}  \right][ K_0\ast \rho_s -   K_0\ast \rho_s^N] \,dx \,ds\\
    &+\int_0^t\int_{\mathbb{T}^d} \left[\rho_s \frac{\nabla^2\rho_s}{\rho_s}  \right][ K_0\ast \rho_s -   K_0\ast \rho_s^N] \,dx \,ds\\
    &\overset{}{\leq} \int_0^t\lambda\|\nabla^2 \rho_s\|_{\infty}\|K_0\|_{\infty}\|\rho_s^N-\rho_s\|_{1}^2 \, ds\\
    &-\int_0^t \underbrace{\int_{\mathbb{T}^d} \nabla \rho_s [K\ast \rho_s -   K\ast \rho_s^N] \,dx }_{=0}\,ds\\
    &\overset{}{=} \int_0^t\lambda\|\nabla^2 \rho_s\|_{\infty}\|K_0\|_{\infty}\|\rho_s^N-\rho_s\|_{1}^2 \, ds.
    \end{align*}
    Therefore, by applying $\epsilon$-Young inequality and Lemma \ref{L1entropyfisherinformation inequality} in Appendix C, we get 
    \begin{align}
     I_t^{1,1} &\overset{\text{Lemma} \ref{L1entropyfisherinformation inequality}}{\leq} \int_0^t C\lambda\|\nabla^2 \rho_s\|_{\infty}\|K_0\|_{\infty}\mathcal{H}(\rho_s^N|\rho_s)^{1/2}\mathcal{I}(\rho_s^N|\rho_s)^{1/2}\, ds \nonumber\\ &\overset{\text{Young}}{\leq} \int_0^t C^2 C_{\epsilon}\lambda^2\|\nabla^2 \rho_s\|_{\infty}^2\|K_0\|_{\infty}^2\mathcal{H}(\rho_s^N|\rho_s) \, ds + \epsilon\int_0^t\mathcal{I}(\rho_s^N|\rho_s) \, ds. 
   \label{It11}
\end{align}
Thus, from  $(\ref{It12})$ and $(\ref{It11})$, we have  the following estimate for the term $I_t^t$ in (\ref{only Holder}):
\begin{align}
   I_t^1&\leq C_{\epsilon,K_0,\lambda}\int_0^t \|\nabla^2 \rho_s\|_{\infty}^2\mathcal{H}(\rho_s^N|\rho_s)\, ds \nonumber \\
    &+ 2\epsilon  \int_0^t \mathcal{I}(\rho_s^N|\rho_s) \,ds +C_{\epsilon,K_0,\lambda} \int_0^t \|\nabla \rho_s\|^2_{\infty} \mathcal{H}(\rho_s^N|\rho_s)  \,ds. \label{It1}
\end{align}
Now, we will focus on the term $I_t^2$ in (\ref{only Holder}). 
 For that purpose, we first use the polynomial decay of our mollifier $V_0$ given by $(\ref{decay of mollifier})$.  
 
  Indeed, if $|x - \cdot| < N^{-\frac{\beta \gamma}{d}}$ we have 
   \begin{align}
      \left|\left\langle S^N_s, V^N(x - \cdot)|x-\cdot|^{\gamma} \right \rangle \right|\leq  N^{-\theta_2}\rho_s^N(x) \label{triangularization  without compact support 0}
  \end{align} 
  with $\theta_2 \doteq\frac{\beta \gamma^2}{d}$.
  
  Now, we consider the case that $|x - \cdot| \geq N^{-\frac{\beta \gamma}{d}}$. We  note that by $(\ref{decay of mollifier})$,
  
   \begin{align*}
       V_0\left(N^{\frac{\beta}{d}}\left((x - \cdot)-k\right)\right) & \overset{(\ref{decay of mollifier})}{\leq} \frac{C_d}{\left|N^{\frac{\beta}{d}}\right|^{2d}\left|\left((x-\cdot) -k \right)\right|^{2d}}, \, \, k \in \mathbb{Z}^d.
   \end{align*}
   For $k=0$
   \begin{align*}
      V_0\left(N^{\frac{\beta}{d}}\left((x - \cdot)\right)\right) 
       & \leq \frac{C_d}{\left|N^{\frac{\beta}{d}}\right|^{2d}\left|\left(x-\cdot \right)\right|^{2d}}\\
       &\overset{|x - \cdot| \geq N^{-\frac{\beta \gamma}{d}}}{ \leq} \frac{C_d\left|N^{\frac{\beta\gamma}{d}}\right|^{2d}}{\left|N^{\frac{\beta}{d}}\right|^{2d}} = \frac{N^{2\beta\gamma}}{N^{\beta}} \frac{C_d}{N^{\beta}}.
        \end{align*}
        For $k \neq 0$,
         \begin{align*}         \sum_{k\neq0}V_0\left(N^{\frac{\beta}{d}}\left((x - \cdot)-k\right)\right)&\leq
      N^{-2\beta}\sum_{k\neq0}\frac{C_d}{\left|\left((x-\cdot) -k \right)\right|^{2d}} \leq C_d N^{-2\beta}.
   \end{align*}
   So, if  $|x - \cdot| \geq N^{-\frac{\beta \gamma}{d}}$ and $\gamma \in (0,\frac{1}{2})$, we have 
   \begin{align*}
       V^N(x-\cdot) &\overset{(\ref{def Vn period})}{=} \sum_{k \in \mathbb{Z}^d} V_0^N((x - \cdot) - k)\\
       &=  \sum_{k \in \mathbb{Z}^d}N^{\beta}V_0\left(N^{\frac{\beta}{d}}\left((x - \cdot)-k\right)\right) \leq C_dN^{2\beta \gamma - \beta}
   \end{align*}
    and then since that $|x - \cdot| \leq 1$ we have 
     \begin{align}
      \left|\left\langle S^N_s, V^N(x - \cdot)|x-\cdot|^{\gamma} \right \rangle \right|\leq C_d N^{-\theta_1}, \label{triangularization  without compact support 1}
  \end{align}
  with $\theta_1 \doteq \beta(1-2\gamma)$.
  
  From $(\ref{triangularization  without compact support 0})$  and   $(\ref{triangularization  without compact support 1})$, we get 
  \begin{align}
      \left|\left\langle S^N_s, V^N(x - \cdot)|x-\cdot|^{\gamma} \right \rangle \right|\leq C_dN^{-\theta_1} + N^{-\theta_2}\rho_s^N(x) \label{triangularization  without compact support}
  \end{align}
  with $\theta_1 \doteq \beta(1-2\gamma)$ and $\theta_2 \doteq \frac{\beta \gamma^2}{d}$.
Now we address the difference term involving the convolution with $K$ in (\ref{only Holder}).

In fact, we recall that by assumption $(\mathbf{A}^{\nabla\cdot K})$, $K = \nabla\cdot K_0$ with $K_0 \in C^{\gamma}$.
So, we derive
\begin{align}
    \left|K \ast \rho_s^N(\cdot) - K\ast \rho_s^N(x)\right| & = \left|K_0 \ast \nabla \rho_s^N(\cdot) - K_0\ast \nabla \rho_s^N(x)\right|\nonumber\\
    &\leq \int_{\mathbb{T}^d} \left|K_0(\cdot -y) - K_0(x - y)\right||\nabla \rho_s^N(y)| \, dy\nonumber\\
    &\leq \int_{\mathbb{T}^d} \|K_0\|_{\gamma}\left|\cdot - x\right|^{\gamma}|\nabla\rho_s^N(y)| \, dy \nonumber\\
    &= \|K_0\|_{\gamma}|\cdot - x|^{\gamma} \int_{\mathbb{T}^d} |\nabla\rho_s^N(y)| \, dy. \label{K_0 Holder}
\end{align}
In addition, by $(\ref{vndensity})$, $\rho_s^N \in \mathcal{P}\left(\mathbb{T}^d\right)$, $\mathbb{P}$-a.s. and then by Holder's inequality we obtain 
\begin{align}
   \int_{\mathbb{T}^d}|\nabla \rho_s^N| \,dx &= \int_{\mathbb{T}^d}|\nabla \rho_s^N| \frac{\sqrt{\rho_s^N}}{\sqrt{\rho_s^N}} \,dx \nonumber\\
    &\overset{\text{Holder}}{\leq} \left(\int_{\mathbb{T}^d}\frac{|\nabla \rho_s^N|^2}{\rho_s^N} \,dx\right)^{\frac{1}{2}} \left(\int_{\mathbb{T}^d}\rho_s^N \,dx \right)^{\frac{1}{2}} \nonumber \\
    &\overset{(\ref{vndensity})}{=} \left(\int_{\mathbb{T}^d}\frac{|\nabla \rho_s^N|^2}{\rho_s^N}\,dx \right)^{\frac{1}{2}}. \label{fisher and L1}
\end{align}
Recalling that   $\lambda^{-1} \leq \rho \leq \lambda$, for some $\lambda >1$, $\mathbb{P}$-a.s., from  Leibniz's rule we deduce 

\begin{align} \label{fisher rho^N and difference}
   \int_{\mathbb{T}^d}  \frac{|\nabla \rho_s^N|^2}{\rho_s^N} \,dx \nonumber& = \int_{\mathbb{T}^d}  \frac{\left|\nabla \left(\rho_s\frac{\rho_s^N}{\rho_s}\right)\right|^2}{\rho_s^N} \,dx\\
    \nonumber&\leq2\int_{\mathbb{T}^d}  \frac{\left|\nabla \rho_s\left(\frac{\rho_s^N}{\rho_s}\right)\right|^2}{\rho_s^N} \,dx + 2\int_{\mathbb{T}^d}  \frac{\left|\rho_s\nabla \left(\frac{\rho_s^N}{\rho_s}\right)\right|^2}{\rho_s^N} \,dx
    \\
   &\overset{(\ref{vndensity})}{\leq} 2\lambda^2 \|\nabla \rho_s\|^2_{\infty} + 2\int_{\mathbb{T}^d} \rho_s^N \left|\nabla \ln \left(\frac{\rho_s^N}{\rho_s}\right)\right|^2 \,dx
    \nonumber\\
   &= 2\lambda^2 \|\nabla \rho_s\|^2_{\infty} + 2\mathcal{I}(\rho_s^N|\rho_s).
\end{align}
From $(\ref{K_0 Holder})$, $(\ref{fisher and L1})$ and $(\ref{fisher rho^N and difference})$, recalling $I_t^2$ in (\ref{only Holder}), we have 
\begin{align*}
   \left| \left\langle S_s^N, V^N(x - \cdot)[K\ast \rho_s^N(\cdot) - K\ast \rho_s^N(x)]\right\rangle \right| &\leq \|K_0\|_{\gamma}\left\langle S_s^N, V^N(x - \cdot)|x-\cdot|^{\gamma}\right\rangle  \\
&\times\left(2\lambda^2\|\nabla \rho_s\|_{\infty}^2 + 2\mathcal{I}\left(\rho_s^N|\rho_s\right)\right)^{\frac{1}{2}} 
\end{align*}
which implies, by $\epsilon$-Young inequality,
\begin{align*}
    I^{2}_t &\doteq \int_0^t\int_{\mathbb{T}^d} \nabla \ln\left(\frac{\rho_s}{\rho_s^N} \right) \left\langle S_s^N, V^N(x - \cdot)[K\ast \rho_s^N(\cdot) - K\ast \rho_s^N(x)]\right\rangle \,dx \,ds \\
    &\lesssim \int_0^t\int_{\mathbb{T}^d} \left| \nabla \ln\left(\frac{\rho_s}{\rho_s^N} \right)\right| \left| \left\langle S_s^N, V^N(x - \cdot)|x-\cdot|^{\gamma}\right\rangle \right| \left(\|\nabla \rho_s\|_{\infty}^2 + \mathcal{I}\left(\rho_s^N|\rho_s\right)\right)^{\frac{1}{2}}\,dx \,ds  \\
    &\lesssim \int_0^t\int_{\mathbb{T}^d} \left| \nabla \ln\left(\frac{\rho_s^N}{\rho_s} \right)\right| \left| \left\langle S_s^N, V^N(x - \cdot)|x-\cdot|^{\gamma}\right\rangle \right|^{(\frac{1}{2} + \frac{1}{2})} \left(\|\nabla \rho_s\|_{\infty}^2 + \mathcal{I}\left(\rho_s^N|\rho_s\right)\right)^{\frac{1}{2}}\,dx \,ds  \\
     &\overset{\epsilon-\text{Young}}{\leq} \epsilon \int_0^t\int_{\mathbb{T}^d} \left| \nabla \ln\left(\frac{\rho_s^N}{\rho_s} \right)\right|^2\left| \left\langle S_s^N, V^N(x - \cdot)|x-\cdot|^{\gamma}\right\rangle \right|\,dx \,ds  \\
      &+ C_{\epsilon, K_0, \lambda}\int_0^t\int_{\mathbb{T}^d}  \left| \left\langle S_s^N, V^N(x - \cdot)|x-\cdot|^{\gamma}\right\rangle \right|\left(\|\nabla \rho_s\|_{\infty}^2 + \mathcal{I}\left(\rho_s^N|\rho_s\right)\right)\,dx \,ds.  \end{align*}

      Thus by the estimate (\ref{triangularization  without compact support}) and $|x-\cdot| \leq 1$, we find 
      \begin{align*}
      I_t^2 &\overset{(\ref{triangularization  without compact support})}{\leq}\epsilon \int_0^t\int_{\mathbb{T}^d} \left| \nabla \ln\left(\frac{\rho_s^N}{\rho_s} \right)\right|^2\left| \left\langle S_s^N, V^N(x - \cdot)\right\rangle \right|\,dx \,ds  \\
      &+ C_{\epsilon, K_0, \lambda, d}\int_0^t\int_{\mathbb{T}^d} \left[ N^{-\theta_1} + N^{-\theta_2}\rho_s^N(x) \right]\left(\|\nabla \rho_s\|_{\infty}^2 + \mathcal{I}\left(\rho_s^N|\rho_s\right)\right)\,dx \,ds  \\
      &\overset{}{=}\epsilon \int_0^t\int_{\mathbb{T}^d} \rho_s^N\left| \nabla \ln\left(\frac{\rho_s^N}{\rho_s} \right)\right|^2\,dx \,ds  \\
      &+ C_{\epsilon,K_0,\lambda,d}\int_0^t\left(\|\nabla \rho_s\|_{\infty}^2 + \mathcal{I}\left(\rho_s^N|\rho_s\right)\right)\int_{\mathbb{T}^d} \left[ N^{-\theta_1} + N^{-\theta_2}\rho_s^N \right]\,dx \,ds.
      \end{align*}
      Again since that by $(\ref{vndensity})$, $\rho_s^N \in \mathcal{P}\left(\mathbb{T}^d\right)$, $\mathbb{P}$-a.s., for $N>>1$, we obtain
      \begin{align}
     I^2_t &\overset{(\ref{vndensity})}{=} \epsilon\int_0^t\mathcal{I}(\rho_s^N|\rho_s) \,ds  \nonumber\\
      &+  C_{\epsilon,K_0,\lambda,d} \left[ N^{-\theta_1} + N^{-\theta_2} \right]  \int_0^t \left(\|\nabla \rho_s\|_{\infty}^2 + \mathcal{I}\left(\rho_s^N|\rho_s\right)\right)\,ds\nonumber\\
      &\overset{N>>1}{\leq} 2\epsilon\int_0^t\mathcal{I}(\rho_s^N|\rho_s) \,ds  \nonumber\\
      &+    C_{\epsilon,K_0,\lambda,d}\left[ N^{-\theta_1} + N^{-\theta_2} \right]   \int_0^t \|\nabla \rho_s\|_{\infty}^2 \,ds. \label{It2}
      \end{align}
      
Finally, by $(\ref{It1})$ and $(\ref{It2})$ in $(\ref{only Holder})$,  $N>>1$, we conclude  with the following estimate for the nonlinear term:
\begin{align}
   I_t
   &\leq C_{\epsilon,K_0,\lambda} \int_0^t \|\nabla^2 \rho_s\|^2_{\infty} \mathcal{H}(\rho_s^N|\rho_s)  \,ds\nonumber\\
    &+ 2\epsilon  \int_0^t \mathcal{I}(\rho_s^N|\rho_s) \,ds + C_{\epsilon,K_0,\lambda} \int_0^t \|\nabla \rho_s\|^2_{\infty} \mathcal{H}(\rho_s^N|\rho_s)  \,ds\nonumber\\
    &\overset{}{+} 2\epsilon\int_0^t\mathcal{I}(\rho_s^N|\rho_s) \,ds +    C_{\epsilon,K_0,\lambda,d}\left[ N^{-\theta_1} + N^{-\theta_2} \right]   \int_0^t \|\nabla \rho_s\|_{\infty}^2 \,ds  \label{I_t regular kernel}
\end{align}
 with $\theta_1 \doteq \beta(1-2\gamma)$ and $\theta_2 \doteq \frac{\beta \gamma^2}{d}$.
\subsection{Estimates for $I_t$ if $K \in C^{\gamma}$}\label{nonlinearholder}

 When the interaction kernel $K$ is a Holder continuous function, a more straightforward estimate becomes available for the nonlinear term.

Indeed, starting from $(\ref{only Holder})$, since $\rho_s^N \in \mathcal{P}\left(\mathbb{T}^d\right)$, $\mathbb{P}$-a.s. and $K \in L^{\infty}$, by convolution inequality, $\epsilon$-Young inequality and Lemma \ref{L1entropyfisherinformation inequality} in Appendix C, 
\begin{align*}
    I_t^1 &\doteq \int_0^t\int_{\mathbb{T}^d} \rho_s^N\nabla \ln\left(\frac{\rho_s}{\rho_s^N} \right)[K\ast \rho_s -  K\ast \rho_s^N] \,dx \,ds \\
    &\overset{\text{Young}}{\leq} \epsilon\int_0^t\mathcal{I}\left(\rho_s^N|\rho_s\right) \,ds +C_{\epsilon,K}\int_0^{t}\|\rho_s^N - \rho_s\|_1^2 \,ds\\
     &\overset{\text{Lemma} \ref{L1entropyfisherinformation inequality}}{\leq} \epsilon\int_0^t\mathcal{I}\left(\rho_s^N|\rho_s\right) \,ds +CC_{\epsilon,K}\int_0^{t}\mathcal{H}\left(\rho_s^N|\rho_s\right) \,ds.
\end{align*}
Also by definition of convolution and  $\rho_s^N \in \mathcal{P}\left(\mathbb{T}^d\right)$, $\mathbb{P}$-a.s., by (\ref{vndensity})
\begin{align}
    \left|K \ast \rho_s^N(\cdot) - K\ast \rho_s^N(x)\right| 
    &\leq \int_{\mathbb{T}^d} \left|K(\cdot -y) - K(x - y)\right|| \rho_s^N(y)| \, dy\nonumber\\
    &\leq \int_{\mathbb{T}^d} \|K\|_{\gamma}\left|\cdot - x\right|^{\gamma}|\rho_s^N(y)| \, dy \nonumber\\
    &\overset{(\ref{vndensity})}{=}\|K\|_{\gamma}|\cdot - x|^{\gamma} \nonumber 
\end{align}
and then by $\epsilon$-Young inequality
\begin{align*}
    I^{2}_t &\doteq \int_0^t\int_{\mathbb{T}^d} \nabla \ln\left(\frac{\rho_s}{\rho_s^N} \right) \left\langle S_s^N, V^N(x - \cdot)[K\ast \rho_s^N(\cdot) - K\ast \rho_s^N(x)]\right\rangle \,dx \,ds \\
    &\lesssim \int_0^t\int_{\mathbb{T}^d} \left| \nabla \ln\left(\frac{\rho_s}{\rho_s^N} \right)\right| \left| \left\langle S_s^N, V^N(x - \cdot)|x-\cdot|^{\gamma}\right\rangle \right|\,dx \,ds  \\
    &\lesssim \int_0^t\int_{\mathbb{T}^d} \left| \nabla \ln\left(\frac{\rho_s^N}{\rho_s} \right)\right| \left| \left\langle S_s^N, V^N(x - \cdot)|x-\cdot|^{\gamma}\right\rangle \right|^{(\frac{1}{2} + \frac{1}{2})} \,dx \,ds  \\
     &\overset{\epsilon-\text{Young}}{\leq} \epsilon \int_0^t\int_{\mathbb{T}^d} \left| \nabla \ln\left(\frac{\rho_s^N}{\rho_s} \right)\right|^2\left| \left\langle S_s^N, V^N(x - \cdot)|x-\cdot|^{\gamma}\right\rangle \right|\,dx \,ds  \\
      &+ C_{\epsilon,K}\int_0^t\int_{\mathbb{T}^d}  \left| \left\langle S_s^N, V^N(x - \cdot)|x-\cdot|^{\gamma}\right\rangle \right|\,dx \,ds.  \end{align*}

      By estimates in (\ref{triangularization  without compact support}), $|x-\cdot| \leq 1$ and $\rho_s^N \in \mathcal{P}\left(\mathbb{T}^d\right)$, $\mathbb{P}$-a.s., we obtain 
      \begin{align*}
     I_t^2&\overset{(\ref{triangularization  without compact support})}{\leq}\epsilon \int_0^t\int_{\mathbb{T}^d} \left| \nabla \ln\left(\frac{\rho_s^N}{\rho_s} \right)\right|^2\left| \left\langle S_s^N, V^N(x - \cdot)\right\rangle \right|\,dx \,ds  \\
      &+ C_{\epsilon,K,d}\int_0^t\int_{\mathbb{T}^d} \left[ N^{-\theta_1} + N^{-\theta_2}\rho_s^N \right]\,dx \,ds  \\
      &\overset{}{=}\epsilon \int_0^t\int_{\mathbb{T}^d} \rho_s^N\left| \nabla \ln\left(\frac{\rho_s^N}{\rho_s} \right)\right|^2\,dx \,ds  \\
      &+ C_{\epsilon,K, d}\int_0^t\int_{\mathbb{T}^d} \left[ N^{-\theta_1} + N^{-\theta_2}\rho_s^N \right]\,dx \,ds  \\
      &\overset{(\ref{vndensity})}{\leq} \epsilon\int_0^t\mathcal{I}(\rho_s^N|\rho_s) \,ds  \\
      &+  C_{\epsilon,K,d} \left[ N^{-\theta_1} + N^{-\theta_2} \right] t.\\
           \end{align*}
Finally, we deduce
\begin{align}
    I_t \leq 2\epsilon\int_0^t\mathcal{I}\left(\rho_s^N|\rho_s\right) \,ds +C_{\epsilon,K}\int_0^{t}\mathcal{H}\left(\rho_s^N|\rho_s\right) \,ds + C_{\epsilon,K,d} \left[ N^{-\theta_1} + N^{-\theta_2} \right] t \label{drift holder}
\end{align}
 with $\theta_1 \doteq \beta(1-2\gamma)$ and $\theta_2 \doteq \frac{\beta \gamma^2}{d}$.
\subsection{End of the proof: application of Gronwall's Lemma
}\label{end of the proof}
 In this subsection, we conclude our estimates.
To begin, we state the following bound for the martingale term, with the corresponding proof given in Appendix A.
\begin{lem} \label{martingale}
    It holds that 
    \begin{align*} 
       \left[\mathbb{E}\left(\sup_{t\in[0,T]}|M_t^N|\right)^m \right]^{\frac{1}{m}}  \lesssim N^{-\theta_3} 
    \end{align*}
    with
   $$\theta_3 \doteq \frac{1}{2}-\beta\Big(1 + \frac{1 }{d}\Big)$$
    for all $m\geq1$ and $N \in \mathbb{N}$.
\end{lem}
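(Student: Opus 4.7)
The strategy is to reduce $M_t^N$ to a single nonzero summand by exploiting cancellations. After Fubini in $x$, the terms $R_t^8$ and $R_t^{10}$ vanish because $\int_{\mathbb{T}^d}\nabla\rho_s^N\,dx=0$ and $\int_{\mathbb{T}^d}\nabla V^N(x-X_s^i)\,dx=0$ by periodicity on the torus. For the pair $R_t^7$ and $R_t^{11}$, stochastic Fubini together with an integration by parts in $x$ inside the integrand of $R_t^7$ yields
\[
\int_{\mathbb{T}^d}[\ln\rho_s-\ln\rho_s^N]\nabla\rho_s^N\,dx = -\int_{\mathbb{T}^d}\rho_s^N\nabla\ln\rho_s\,dx = -\int_{\mathbb{T}^d}\frac{\rho_s^N}{\rho_s}\nabla\rho_s\,dx,
\]
so that $R_t^7+R_t^{11}=0$. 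Hence $M_t^N=R_t^9$, reducing the analysis to the individual-noise martingale.

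Next, stochastic Fubini rewrites $R_t^9=\sum_{i=1}^N\int_0^t\phi_s^i\cdot dW_s^i$ with
\[
\phi_s^i=\frac{1}{N}\int_{\mathbb{T}^d}[\ln\rho_s(x)-\ln\rho_s^N(x)]\nabla V^N(x-X_s^i)\,dx.
\]
By the Burkholder-Davis-Gundy inequality,
\[
\Bigl\|\sup_{t\in[0,T]}|R_t^9|\Bigr\|_{L^m(\Omega)} \lesssim \Bigl\|\Bigl(\int_0^T\sum_{i=1}^N|\phi_s^i|^2\,ds\Bigr)^{1/2}\Bigr\|_{L^m(\Omega)}.
\]
The pointwise bound $|\phi_s^i|\lesssim \frac{1}{N}\|\nabla V^N\|_\infty\int_{\mathbb{T}^d}|\ln\rho_s-\ln\rho_s^N|\,dx$ combined with $\|\nabla V^N\|_\infty\lesssim N^{\beta(1+1/d)}$ from (\ref{estimate grad mollifier period}) gives, after summation over the $N$ particles (producing a factor $N$ that combines with the $\tfrac{1}{N^2}$ prefactor to yield $\tfrac{1}{N}$) and the square root in BDG, the scaling $N^{-1/2}\cdot N^{\beta(1+1/d)}=N^{-\theta_3}$, provided the remaining integral factor is controlled uniformly in $N$ in the $L^m(\Omega)$ norm.

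The main obstacle is precisely this uniform control of $\|\ln\rho_s-\ln\rho_s^N\|_{L^1(\mathbb{T}^d)}$. The crude bound $|\ln\rho_s^N|\leq C_N\sim N^{2\beta/d}$ from (\ref{bound for rhoN}) would spoil the target rate if used directly, so the key point is to exploit the localization of $\nabla V^N(\cdot-X_s^i)$ on a ball of radius $\sim N^{-\beta/d}$ around $X_s^i$ provided by (\ref{decay of mollifier}). On this ball, $\rho_s^N(x)\geq \frac{1}{N}V^N(x-X_s^i)\gtrsim N^{\beta-1}$, so $|\ln\rho_s^N(x)|\lesssim \ln N$, while the tail contribution outside the ball is negligible by the decay of $V_0$. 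Combined with $\rho_s\in(\lambda^{-1},\lambda)$ from Theorem~\ref{Teo_Krylov}, this keeps the integrand of $\phi_s^i$ controlled up to logarithmic factors that are absorbed into the $\lesssim$ constant, yielding the announced rate $N^{-\theta_3}$ with $\theta_3=\frac{1}{2}-\beta(1+1/d)$.
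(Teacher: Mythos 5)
Your algebraic reductions and the BDG step match the paper's route: you correctly observe that $R^7_t+R^{11}_t=0$ and $R^8_t=0$ after stochastic Fubini and integration by parts, and you additionally note $R^{10}_t=0$ (the paper keeps this term and bounds it separately, with the same rate; both are fine). The problem lies in how you bound $\phi^i_s = \frac{1}{N}\int[\ln\rho_s - \ln\rho^N_s]\nabla V^N(x-X^i_s)\,dx$, where you depart from the paper.

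Your declared estimate $|\phi^i_s|\lesssim \frac{1}{N}\|\nabla V^N\|_\infty\,\|\ln\rho_s-\ln\rho^N_s\|_{L^1(\mathbb{T}^d)}$ needs $\|\ln\rho_s-\ln\rho^N_s\|_{L^1}$ to be bounded uniformly in $N$ and $\omega$, but it is not: the lower bound (\ref{bound for rhoN}) is only of order $N^{\beta}e^{-N^{2\beta/d}}$, so $|\ln\rho_s^N|$ can be of order $N^{2\beta/d}$ on a set of measure $O(1)$ (e.g.\ if the particles cluster), making $\|\ln\rho^N_s\|_{L^1}$ grow polynomially. The localization you then invoke does not fix this, because it is a \emph{pointwise} statement valid near $X_s^i$ and says nothing about a \emph{global} $L^1$-norm; it only becomes useful if you keep the product $|\ln\rho_s-\ln\rho_s^N|\cdot|\nabla V^N(x-X^i_s)|$ together, i.e.\ take the other H\"older split with $\|\nabla V^N\|_{L^1}\lesssim N^{\beta/d}$ (this is what the paper does, combined with the crude global $L^\infty$-bound $|\ln\rho_s-\ln\rho_s^N|\lesssim N^{\beta}$ in place of your sharper localized $\ln N$). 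Finally, the claim that the resulting $\ln N$ factor is ``absorbed into the $\lesssim$ constant'' is false, since $\ln N$ is unbounded in $N$; what saves the argument after correcting the H\"older split is that $\ln N \leq C N^{\beta}$ for $N$ large, so the localized rate $\ln N\cdot N^{\beta/d-1/2}$ is still $\lesssim N^{\beta+\beta/d-1/2}=N^{-\theta_3}$. As written, the proposal does not establish the lemma.
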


We now complete the proof of the main Theorems, by applying Gronwall's inequality.
\subsubsection{Proof of Theorem \ref{first main}}

Assuming  that \( K_0 \in C^{\gamma} \), we put $(\ref{II_t}), (\ref{cancelations}),(\ref{I_t regular kernel}), $ into $(\ref{ito at entropy})$. Then, for $N>>1$ we get 
\begin{align}
    \mathcal{H}(\rho_t^N|\rho_t) - \mathcal{H}(\rho_0^N|\rho_0) &\leq C_{\epsilon,K_0,\lambda}\int_0^t\|\nabla^2 \rho_s\|_{\infty}^2\mathcal{H}(\rho_s^N|\rho_s)\, ds \nonumber\\
    &+  C_{\epsilon,K_0,\lambda} \int_0^t \|\nabla \rho_s\|^2_{\infty} \mathcal{H}(\rho_s^N|\rho_s)  \,ds\nonumber\\
    &\overset{}{+} 4\epsilon\int_0^t\mathcal{I}(\rho_s^N|\rho_s) \,ds\nonumber\\
    &+  C_{\epsilon,K_0,\lambda,d}\left[ N^{-\theta_1} + N^{-\theta_2} \right]   \int_0^t \|\nabla \rho_s\|_{\infty}^2 \,ds 
    \nonumber\\
    &-\frac{1}{2} \int_0^t \mathcal{I}(\rho_s^N|\rho_s)  \,ds \nonumber\\
    &+tC_dN^{-1+\beta + 2\frac{\beta}{d}}\nonumber\\
    &+ M_t^N. \label{entropy regular grownall}
\end{align}
By Lemma \ref{martingale} combined with Lemma \ref{borel cantelli} in Appendix C, for all $\delta \in (0,\theta_3)$ there exists a random variable, \( A_0 \) with finite moments such that
\begin{align*}       \sup_{t\in[0,T]}|M_t^N|  \leq A_0N^{-\theta_3 + \delta},
    \end{align*}
which allows us to express 
(\ref{entropy regular grownall}), for $\epsilon = \frac{1}{8}$, as
\begin{align}
    \mathcal{H}(\rho_t^N|\rho_t) - \mathcal{H}(\rho_0^N|\rho_0) &\leq C_{\epsilon,K_0,\lambda}\int_0^t\|\nabla^2 \rho_s\|_{\infty}^2\mathcal{H}(\rho_s^N|\rho_s)\, ds \nonumber\\
    &+ C_{\epsilon,K_0,\lambda} \int_0^t \|\nabla \rho_s\|^2_{\infty} \mathcal{H}(\rho_s^N|\rho_s)  \,ds\nonumber\\
    &\overset{}{+}  C_{\epsilon,K_0,\lambda,d}\left[ N^{-\theta_1} + N^{-\theta_2} \right]   \int_0^t \|\nabla \rho_s\|_{\infty}^2 \,ds 
    \nonumber\\
    &+tC_dN^{-1+\beta + 2\frac{\beta}{d}}\nonumber\\
    &+ A_0N^{-\theta_3 + \delta} \label{entropy regular grownall without expectation 1}
\end{align}
 with $\theta_1 \doteq \beta(1-2\gamma)$, $\theta_2 \doteq \frac{\beta \gamma^2}{d}$ and $\theta_3 \doteq \frac{1}{2}-\beta\Big(1 + \frac{1 }{d}\Big)$.
 
From  Gronwall's Lemma, we deduce  

\begin{align*}
   \sup_{t \in [0,T]}\mathcal{H}(\rho_t^N|\rho_t) &\lesssim  \left(\mathcal{H}(\rho_0^N|\rho_0) + N^{-\theta_4}\int_0^T \|\nabla \rho_t\|_{\infty}^2 dt + N^{-\theta_4} + A_0N^{-\theta_4} \right)\\
   &\times \exp\left(\int_0^T \|\nabla \rho_t\|_{\infty}^2 dt + \int_0^T \|\nabla^2 \rho_t\|_{\infty}^2 dt\right)\nonumber\\
   \end{align*}
   with
     \begin{align*}
\theta_4 \doteq \min \left(\beta(1-2\gamma);\frac{\beta}{d}\gamma^2;\left(\frac{1}{2}-\beta\Big(1 + \frac{1 }{d}\Big) \right) - \delta  \right).
\end{align*}

By Corollary \ref{coro Teo_Krylov} and  Sobolev embedding for $q>d$, we have 
    \begin{align*}
       \int_0^T\|\nabla \rho_t\|_{\infty}^2 \, dt, \int_0^T\|\nabla^2 \rho_t\|_{\infty}^2 \, dt < \infty, \, \, \, \mathbb{P}-a.s.
    \end{align*}  
Thus for 
\begin{align*}
\theta \doteq \min \left(\beta(1-2\gamma);\frac{\beta}{d}\gamma^2;\left(\frac{1}{2}-\beta\Big(1 + \frac{1 }{d}\Big) \right)  \right) - \delta,
\end{align*}
we end up with
    \begin{align*}
        \lim_{N \to \infty} N^{\theta}  \sup_{t \in [0,T]}\mathcal{H}(\rho_t^N|\rho_t) = 0, \, \, \, \mathbb{P}-a.s.,
    \end{align*}
    since  $\theta < \theta_4$, which conclude the proof of   Theorem \ref{first main}.
 \subsubsection{Proof of Theorem \ref{second main}}
If $K_0 \in C^{\gamma}$ and $\sigma =0$, by Jensen's inequality applied to the function $|\cdot|^m$, $m\geq 1$ and taking expectation in (\ref{entropy regular grownall}), $\epsilon=\frac{1}{8}$ implies, for $N>>1$ and $r \in [0,T]$,
\begin{align}
    \mathbb{E}\left(\sup_{t \in [0,r]}\mathcal{H}(\rho_t^N|\rho_t)\right)^m  &\leq \mathbb{E}\left(\mathcal{H}(\rho_0^N|\rho_0)\right)^m\nonumber\\
    &+C_{\epsilon,K_0,\lambda}^m \int_0^r\|\nabla^2 \rho_s\|_{\infty}^{2m}\mathbb{E}\left(\sup_{t \in [0,s]}\mathcal{H}(\rho_t^N|\rho_t)\right)^m\, ds \nonumber\\
    &+  C_{\epsilon,K_0,\lambda}^m\int_0^r \|\nabla \rho_s\|^{2m}_{\infty} \mathbb{E}\left(\sup_{t \in [0,s]}\mathcal{H}(\rho_t^N|\rho_t)\right)^m  \,ds\nonumber\\
   &+ C_{\epsilon,K_0,\lambda,d}^m\left[ N^{-\theta_1} + N^{-\theta_2} \right]^m   \left(\int_0^r \|\nabla \rho_s\|_{\infty}^2 \,ds \right)^m\nonumber\\
    &\overset{}{+} \left(C_dN^{-1+\beta + 2\frac{\beta}{d}}\right)^mT^m 
    \nonumber\\   &+\mathbb{E}\left(\sup_{t\in[0,r]}|M_t^N|\right)^m. \label{grownal sigma zero}
\end{align}
From  Lemma $\ref{martingale} $  and  Gronwall's Lemma we deduce 

\begin{align*}
    \left[\mathbb{E}\left(\sup_{t \in [0,T]}\mathcal{H}(\rho_t^N|\rho_t)\right)^m \right]^{\frac{1}{m}}  &\lesssim  \left(\left[\mathbb{E}\left(\mathcal{H}(\rho_0^N|\rho_0)\right)^m  \right]^{\frac{1}{m}} + N^{-\theta}\left(\int_0^T \|\nabla \rho_s\|_{\infty}^2 \,ds \right) + N^{-\theta} \right)\\
    &\times \exp\left(\frac{1}{m}\int_0^T \|\nabla \rho_t\|_{\infty}^{2m} dt + \frac{1}{m}\int_0^T \|\nabla^2 \rho_t\|_{\infty}^{2m} dt\right)\nonumber\\
   \end{align*}
   with
  \begin{align*}
\theta = \min \left(\beta(1-2\gamma);\frac{\beta}{d}\gamma^2;\frac{1}{2}-\beta\Big(1 + \frac{1 }{d}\Big) \right)
\end{align*}
for $N>>1$. Applying Corollary \ref{coro Teo_Krylov} and the Sobolev embedding theorem for \( q > d \), we obtain the following:

\[
\sup_{t \in [0,T]} \|\nabla \rho_t\|_{\infty}^2, \quad \sup_{t \in [0,T]} \|\nabla^2 \rho_t \|_{\infty}^2 < \infty.
\]

Thus, we conclude the proof of Theorem \ref{second main}.

\subsubsection{Proof of Theorem \ref{third main}}
If $K \in C^{\gamma}$,  we put $(\ref{II_t}), (\ref{cancelations}),(\ref{drift holder}), $ into $(\ref{ito at entropy})$ to get
\begin{align}
    \mathcal{H}(\rho_t^N|\rho_t) - \mathcal{H}(\rho_0^N|\rho_0) &\leq 2\epsilon\int_0^t\mathcal{I}\left(\rho_s^N|\rho_s\right) \,ds \nonumber+C_{\epsilon,K}\int_0^{t}\mathcal{H}\left(\rho_s^N|\rho_s\right) \,ds\\
    &+ C_{\epsilon,K,d} \left[ N^{-\theta_1} + N^{-\theta_2} \right] t \nonumber\\
    &-\frac{1}{2} \int_0^t \mathcal{I}(\rho_s^N|\rho_s)  \,ds \nonumber\\
    &+tC_dN^{-1+\beta + 2\frac{\beta}{d}}\nonumber\\
    &+ M_t^N \label{entropy regular grownall holder}
\end{align}
 with $\theta_1 \doteq \beta(1-2\gamma)$ and $\theta_2 \doteq \frac{\beta \gamma^2}{d}$.

So by Jensen's inequality applied to function $|\cdot|^m$, $m\geq 1$ and taking expectation in (\ref{entropy regular grownall holder}), $\epsilon=\frac{1}{4}$ implies, for all $r \in [0,T]$,
\begin{align}
    \mathbb{E}\left(\sup_{t \in [0,r]}\mathcal{H}(\rho_t^N|\rho_t)\right)^m  &\leq  \mathbb{E}\left(\mathcal{H}(\rho_0^N|\rho_0)\right)^m\nonumber\\
    &+C_{\epsilon,K}^m\int_0^r\mathbb{E}\left(\sup_{t \in [0,s]}\mathcal{H}(\rho_t^N|\rho_t)\right)^m\, ds \nonumber\\
    &+  C_{\epsilon,K,d}^m\left[ N^{-\theta_1} + N^{-\theta_2} \right]^m T^m\nonumber\\
    &\overset{}{+} \left(C_dN^{-1+\beta + 2\frac{\beta}{d}}\right)^mT^m
    \nonumber\\   &+\mathbb{E}\left(\sup_{t\in[0,r]}|M_t^N|\right)^m. \label{grownal holder}
\end{align}
Using Lemma $\ref{martingale}$, the Gronwall's Lemma in
 $(\ref{grownal holder})$ implies that
\begin{align*}
   \left[ \mathbb{E}\left(\sup_{t \in [0,T]}\mathcal{H}(\rho_t^N|\rho_t)\right)^m \right]^{\frac{1}{m}}  &\lesssim  \left[\mathbb{E}\left(\mathcal{H}(\rho_0^N|\rho_0)\right)^m\right]^{\frac{1}{m}} + N^{-\theta}\nonumber\\
   \end{align*}
   with
   \begin{align*}
\theta = \min \left(\beta(1-2\gamma);\frac{\beta}{d}\gamma^2;\frac{1}{2}-\beta\Big(1 + \frac{1 }{d}\Big) \right),
\end{align*}
which proof the Theorem \ref{third main}.

    \section*{Apendix A : Proof of Lemmas  \ref{martingale}} \label{proof lemmas}

 In this section we establish suitable bounds for the martingale terms appearing in our computations. The proof relies on the stochastic Fubini's Theorem together with the Burkholder–Davis–Gundy inequality.

\begin{proof} [Proof of  Lemma \ref{martingale}]   
First since  $\nabla \ln{f} = \frac{\nabla f}{f}$, $f > 0$,   by integration by parts we obtain 
   \begin{align*}
       \int_{\mathbb{T}^d} \left[\ln{\rho_s}-\ln{\rho_s^N}\right] \nabla \rho_s^N \, dx &= - \int_{\mathbb{T}^d}\frac{\rho_s^N}{\rho_s} \nabla\rho_s\, dx + \int_{\mathbb{T}^d}\frac{\nabla \rho_s^N}{\rho_s^N} \rho_s^N \, dx \\
       &=  - \int_{\mathbb{T}^d}\frac{\rho_s^N}{\rho_s} \nabla\rho_s\, dx.
   \end{align*}
    By stochastic Fubini Theorem 2.2 in \cite{Ver} we deduce 
   \begin{align*}
       \int_{\mathbb{T}^d}\int_0^t\left[\ln(\rho_s) -\ln(\rho_s^N)\right]\sigma^{\top}_s \nabla \rho_s^N \, d  B_s dx\nonumber
         &=- \int_{\mathbb{T}^d} \int_0^t\frac{\rho_s^N}{\rho_s} \sigma^{\top}_s\nabla \rho_s \, d  B_s dx . \nonumber\\
   \end{align*}
Indeed, since that for a fixed $N \in \mathbb{N}$ the integrand is a product of measurable bounded functions, and the quadratic variation of the Brownian motion satisfies $d[B]_t=dt$, the integrability conditions of the aforementioned Theorem holds.

Thus, we have  
   \begin{align*}
       M_t^N &\doteq \int_{\mathbb{T}^d}\int_0^t\left[\ln(\rho_s) -\ln(\rho_s^N)\right]\sigma^{\top}_s \nabla \rho_s^N \, d  B_s dx\nonumber\\
         & + \int_{\mathbb{T}^d} \int_0^t\frac{\rho_s^N}{\rho_s} \sigma^{\top}_s\nabla \rho_s \, d  B_s dx  \nonumber\\
       & +\int_{\mathbb{T}^d}\int_0^t\sigma^{\top}_s \nabla \rho_s^N \, d  B_s dx \left(=\int_0^t \sigma^{\top}_s\underbrace{\int_{\mathbb{T}^d} \nabla \rho_s^N \, dx}_{=0}\, d  B_s \right)\nonumber\\
     &+ \frac{1}{N} \sum_{i=1}^N \int_{\mathbb{T}^d} \int_0^t \left[\ln(\rho_s) - \ln(\rho_s^N)\right]\nabla V^N(x - X_s^i)\,dW^i_s \,dx \nonumber \\
    &- \frac{1}{N} \sum_{i=1}^N \int_{\mathbb{T}^d}\int_0^t \nabla V^N(x - X_s^i)\,dW^i_s \,dx,  \nonumber \\
      \end{align*}
which implies that
\begin{align*}
    M_t^N &= \frac{1}{N} \sum_{i=1}^N \int_{\mathbb{T}^d} \int_0^t \left[\ln(\rho_s) - \ln(\rho_s^N)\right]\nabla V^N(x - X_s^i)\,dW^i_s \,dx \nonumber \\
    &- \frac{1}{N} \sum_{i=1}^N \int_{\mathbb{T}^d}\int_0^t \nabla V^N(x - X_s^i)\,dW^i_s \,dx.  \nonumber \\
\end{align*}
Thus, by the Burkholder–Davis–Gundy inequality and the stochastic Fubini theorem, as above,  we get

\begin{align*}\mathbb{E}\left(\sup_{t\in[0,r]}|M_t^N|\right)^m &\leq\mathbb{E}\left|\frac{1}{N} \sum_{i=1}^{N}\int_{0}^{r} \int_{\mathbb{T}^d}\left[\ln{\rho_s}-\ln{\rho_s^N}\right]   ( \nabla V^{N})(x-X_{s}^{i}) \, dx \, dW_{s}^{i} \right|^{m}\\
   &+\mathbb{E}\left|\frac{1}{N} \sum_{i=1}^{N}\int_{0}^{r} \int_{\mathbb{T}^d}   ( \nabla V^{N})(x-X_{s}^{i}) \, dx \, dW_{s}^{i} \right|^{m}\\
&\overset{\text{BDG}}{\lesssim}\mathbb{E}\left|\frac{1}{N^{2}} \sum_{i=1}^{N}\int_{0}^{r} \left(\int_{\mathbb{T}^d}\left[\ln{\rho_s}-\ln{\rho_s^N}\right]  ( \nabla V^{N})(x-X_{s}^{i})\, dx\right)^2 ds \right|^{\frac{m}{2}}\\
&+\mathbb{E}\left|\frac{1}{N} \sum_{i=1}^{N}\int_{0}^{r} \left(\int_{\mathbb{T}^d} \frac{1}{N^{\frac{1}{2}}}( \nabla V^{N})(x-X_{s}^{i})\, dx\right)^2 ds \right|^{\frac{m}{2}}\\
&\doteq A+B.
\end{align*}

\vspace{1 cm}

Now by $(\ref{def Vn period})$, we have 
 \begin{align*}
    \left|\int_{\mathbb{T}^d} \frac{1}{N^{\frac{1}{2}}}( \nabla V^{N})(x-X_{s}^{i})\, dx\right|&\leq \frac{1}{N^{1/2}}\int_{\mathbb{T}^d} \left| ( \nabla V^{N})(x-X_{s}^{i}) \right|\, dx\\  
    &\overset{(\ref{def Vn period})}{\leq} \frac{1}{N^{1/2}}\int_{\mathbb{T}^d} \sum_{k \in \mathbb{Z}^d} \left|\nabla V_0^N\left((x - X_s^i)-k\right)\right|\, dx,
\end{align*}
and then
 \begin{align*}
    \left|\int_{\mathbb{T}^d} \frac{1}{N^{\frac{1}{2}}}( \nabla V^{N})(x-X_{s}^{i})\, dx\right|&\leq  \frac{1}{N^{1/2}}\sum_{k \in \mathbb{Z}^d}\int_{\mathbb{T}^d +k}  \left|\nabla V_0^N\left(y\right)\right|\, dy\\
       &= \frac{1}{N^{1/2}}\int_{\mathbb{R}^d}  \left|\nabla V_0^N\left(y\right)\right|\, dy\\
        &= \frac{1}{N^{1/2}}\int_{\mathbb{R}^d}  \left|N^{\beta}N^{\frac{\beta}{d}}(\nabla V_0)\left(N^{\frac{\beta}{d}}y\right)\right|\, dy\\
        &= \frac{1}{N^{1/2}}\int_{\mathbb{R}^d}  \left|N^{\frac{\beta}{d}}(\nabla V_0)\left(N^{\frac{\beta}{d}}y\right)\right|\,N^{\beta} dy\\
         &= \frac{N^{\frac{\beta}{d}}}{N^{1/2}}\int_{\mathbb{R}^d}  \left|(\nabla V_0)\left(y\right)\right|\, dy.\\
\end{align*}
So, we find
\begin{align*}
    B&\doteq \mathbb{E}\left|\frac{1}{N} \sum_{i=1}^{N}\int_{0}^{r} \left(\int_{\mathbb{T}^d} \frac{1}{N^{\frac{1}{2}}}( \nabla V^{N})(x-X_{s}^{i})\, dx\right)^2 ds \right|^{\frac{m}{2}}\\
    &\leq \mathbb{E}\left|\frac{1}{N} \sum_{i=1}^{N}\int_{0}^{r} \frac{N^{ \frac{2\beta}{d}}}{N}\left(\int_{\mathbb{R}^d}  \left|(\nabla V_0)\left(y\right)\right|\, dy\right)^2 ds \right|^{\frac{m}{2}}\\
    &\leq C_{V_0,T} \left(\frac{N^{ \frac{2\beta}{d}}}{N}\right)^{\frac{m}{2}}.
\end{align*}
  
Now  by $(\ref{estimate vn})$, since $$\ln\left(2 N^{\beta}\exp{(1/4)}C_d\right) \leq 2 N^{\beta}\exp{(1/4)}C_d$$ and $\ln(\lambda) \leq \lambda \leq N^{\beta}$, for $N>>1$,
\begin{align*}
  \left(\int_{\mathbb{T}^d}\left[\ln{\rho_s}-\ln{\rho_s^N}\right]  ( \nabla V^{N})(x-X_{s}^{i})\, dx\right)^2 
  &\leq C_dN^{2\beta}\left(\int_{\mathbb{T}^d}  | ( \nabla V^{N})(x-X_{s}^{i})|\, dx \right)^2
\end{align*}
and  thus
\begin{align*}
    A \leq C_d N^{m \beta} B \leq C_{V_0,T,d}\left(\frac{N^{\beta + \frac{\beta}{d}}}{N^{1/2}}\right)^{m}.
\end{align*} 
\end{proof}

\section*{Apendix B : Proof of Theorem \ref{Teo_Krylov} and Corollary \ref{coro Teo_Krylov}} \label{well-posed and regularity}

\medskip
\subsection*{Proof of Theorem \ref{Teo_Krylov}}
The proof is carried out in two stages. First, we tackle the linearized problem using the \( L^q \)-theory for SPDEs. In the second stage, a contraction argument is employed to establish the solution of the original equation as the fixed point of the solution map. The approach follows the method outlined in \cite{HuangQiu},  \cite{Josue} and \cite{Krylov}.
\begin{proof} [Proof of Theorem \ref{Teo_Krylov}]
Let \[\mathbb{B}\doteq \left\{\rho \in S_{\mathcal{F}^B}^{\infty}\left([0, T]; L^q\left( \mathbb{T}^d\right)\right) \mid \Big\|\|\rho\|_{T,q}\Big\|_{L^{\infty}(\Omega)} \le \|\rho_0\|_q \right\} \]
be with the metric $d(\rho, \rho^{\prime})=\Big\|\|\rho-\rho^{\prime}\|_{T,q}\Big\|_{L^{\infty}(\Omega)}$.

We define the operator $\mathcal{T}: \mathbb{B} \rightarrow S_{\mathcal{F}^B}^{\infty}\left([0, T] ; L^q\left(\mathbb{T}^d\right)\right)$ as follows: for each $\xi \in \mathbb{B}$, let $\mathcal{T}(\xi):=\rho^{\xi}$ be the solution to the following linear SPDE:
\begin{numcases}
\mathrm{d}\rho_t = \frac{1}{2} \Delta \rho_t + \frac{1}{2}D^2 \rho_t (\sigma \sigma^\top)_tdt - \nabla \cdot(\rho_t (K\ast \xi_t)) \, dt \nonumber\\ \hspace{28px} - \nabla \rho_t \cdot \sigma_t\, d  B_t \label{lin_SPDE} \\
\lambda^{-1}\leq \rho_0 \leq \lambda.   \nonumber
\end{numcases}

We will check that the conditions 5.1-6 of Theorem 5.1 in \cite{Krylov} hold for $n=-1$.
For the reader's convenience, we restate these conditions below using our notations:

\begin{itemize}
\item[5.1] There exist $\Lambda_1, \Lambda_2 >0$ such that
\[  \Lambda_1 |\xi|^2 \geq \sum_{i,j,k=1}^d \delta^{ik} \delta^{jk}\, \xi^i\xi^j \ge \Lambda_2 \, |\xi|^2, \]
for any $t\geq0$, and $\xi \in \mathbb{R}^d$.

\item[5.2] For any $i,j\in \{1,...,d\}$ and $\epsilon >0$, there exists $\delta > 0$, such that
\[|a^{ij}_t(x) - a^{ij}_t(y)| + |\sigma^{i}_t(x) - \sigma^{i}_t(y)| < \epsilon \]
whenever $|x-y| < \delta$, $t\geq0$, where
\begin{align*} a^{ij}_t \doteq \frac{1}{2}\sum_{k=1}^d \Big(\delta^{ik} \delta^{jk} + \sigma^{ik}_t\sigma^{jk}_t\Big) \label{aij}.
\end{align*}

\item[5.3] For any  $i,j \in \{1,...,d\}$ and $t\geq0$, the functions $a^{ij}_t$ and $\sigma^i_t$ are continuously differentiable.

\item[5.4] For any $u \in H^{1}_q$, $f(u)=\nabla \cdot( u K\ast \xi)$ is predictable as a function taking values in $H^{-1}_q$.

\item[5.5] For any $i,j \in \{1,...,d\}$ and $t\geq0$,
\[  \|a^{ij}_t\|_{C^1} +   \|\sigma^{i}_t\|_{\infty} \leq M  . \]

\item[5.6] For any $\epsilon > 0$, there exists $C_{\epsilon} > 0$, such that, for any $t\geq0$ and $u_t, v_t \in H^1_q$, we have

\[\left\|f_t(u) - f_t(v)\right\|_{-1,q} \leq \epsilon \left\|u_t - v_t\right\|_{1,q} + C_{\epsilon}\left\|u_t - v_t\right\|_{-1,q}.\] 
\end{itemize}

The Assumption 5.1 holds, since the Laplacian operator is uniformly elliptic. 
Assumptions 5.2, 5.3 and 5.4 follow, since $a$ and $\sigma$ are independent of space. 

For assumption 5.4, the processes in question   are predictable as composition of predictable functions. 

For the Assumption, 5.6, recall that $\|K\ast f\|_{\infty} \le C \|f \|_{q}$, for $q > d$. So we have for all $u_t, v_t \in {H}^1_q $, $t \geq 0$,
\begin{align*}
\left\|\nabla \cdot\big ( K\ast \xi_t(u_t - v_t) \big)\right\|_{-1,q} &=
\left\|(I-\Delta)^{-\frac{1}{2}} \nabla \cdot\big(K\ast \xi_t(u_t - v_t)\big)\right\|_{q} \\ & \le C\left\|(K\ast \xi_t) (u_t - v_t)\right\|_{q} \\ 
& \le C\left\|\xi_t\right\|_{q}\left\|(u_t - v_t)\right\|_{q}\\ 
& \le C\|\rho_0\|_{q} \left\|(u_t - v_t)\right\|_{0,q}\\ 
& \le C\left\|(u_t - v_t)\right\|_{1,q}^{\frac{1}{2}}\left\|(u_t - v_t)\right\|_{-1,q}^{\frac{1}{2}}\\ 
& \le C\epsilon\left\|(u_t - v_t)\right\|_{1,q} + C \epsilon^{-1}\left\|(u_t - v_t)\right\|_{-1,q},
\end{align*}
by the interpolation inequality and using $ \xi \in \mathbb{B}$. 

Notice that the initial condition $\rho_0 \in H^{1-\frac{2}{q}}_q(\mathbb{T}^d)$ and is deterministic, therefore, we have by Theorem 5.1 in \cite{Krylov}  that the linear SPDE (\ref{lin_SPDE}) admits a unique solution $\rho^{\xi} \in L_{\mathcal{F}^B}^q\left([0,T];H^{1}_q\left(\mathbb{T}^d\right)\right)$. In addition, since Theorem 7.1 (iii) in  holds for $q\geq 2$ (by Theorem 4.2 in \cite{Krylov}) and $\|\rho_t\|_{1}=\|\rho_0\|_{1}=1$, $\mathbb{P}$-a.s., we have  $ \rho^{\xi} \in S_{\mathcal{F}^B}^{q}\left([0, T] ; L^1 \cap L^q\left(\mathbb{T}^d\right)\right)$. Moreover, by the maximum principle (Theorem 5.12 in \cite{Krylov}), $\lambda^{-1} \leq \rho^{\xi} \leq \lambda$, $\mathbb{P}$-a.s..
\smallskip

We now check that $\rho^{\xi}$ is also in $S_{\mathcal{F}^B}^{\infty}\left([0, T] ; L^1 \cap L^q\left(\mathbb{T}^d\right)\right)$, and we drop the superscript $\xi$ to ease the computations.  Applying the It{\^o}'s formula for the $L^q$-norm of a $H^1_q$-valued process in \cite{kry},
\begin{align}
&\| \rho_t \|_{q}^q = \|\rho_0\|_{q}^q \nonumber\\&\quad-   \frac{1}{2}\int_0^t\int_{\mathbb{T}^d}  q (q - 1) | \rho_s
(x) |^{q - 2}  |  \, \nabla \rho_s (x) |^2  \, dx \, ds
\nonumber\\&\quad- \frac{1}{2} \int_0^t\int_{\mathbb{T}^d}  q (q - 1) | \rho_s
(x) |^{q - 2}  | \sigma^{\top}_s \, \nabla \rho_s (x) |^2  \, dx\, ds  \nonumber\\
&\quad - \int_0^t\int_{\mathbb{T}^d}  q | \rho_s (x) |^{q - 2} \rho_s (x)\, \nabla \rho_s(x) \left(K\ast \xi_s \right)(x) \, dx\, ds\nonumber\\
&\quad - \int_0^t\int_{\mathbb{T}^d}  q | \rho_s (x) |^{q - 2} \rho_s (x) \nabla
\rho_s (x) \cdot \sigma_s  \, d  x\, d  B_s\nonumber\\
&\quad +  \frac{1}{2} \int_0^t\int_{\mathbb{T}^d}  q (q - 1) | \rho_s
(x) |^{q - 2}  | \sigma^{\top}_s \, \nabla \rho_s (x) |^2  \, dx\, ds \nonumber\\
&\leq \|\rho_0\|_{q}^q -  \frac{1}{2}\int_0^t \int_{\mathbb{T}^d}  q (q - 1) | \rho_s
(x) |^{q - 2}  |  \, \nabla \rho_s (x) |^2 \, dx \, ds \label{q ito}
\end{align}
where in the last inequality we are using integration by parts and $\nabla\cdot K =0$.  So,
\[ \sup_{t \in [0,T]} \norm{ \rho_t}_{q} \le \norm{ \rho_0 }_{q}, \]
which implies that $\rho^{\xi} \in \mathbb{B}$. 
\\

We now show that the map $\mathcal{T}$ is a contraction.

For any $\bar{\xi}, \xi \in \mathbb{B}$, set $\delta \rho=\rho^{\bar{\xi}}-\rho^{\xi}$ and $\delta \xi=\bar{\xi}-\xi$. As before, we apply Itôs formula for the $L^q$-norm of a $H^1_q$-valued process in \cite{kry}  to $\delta \rho$ and obtain

\begin{align*}
\| \delta \rho_t \|_{q}^q  & = -\frac{1}{2}   \int_0^t\int_{\mathbb{T}^d}  q (q - 1) | \delta \rho_s
(x) |^{q - 2}  |  \, \nabla \delta \rho_s (x) |^2  \, dx\, ds\\
&\quad -\frac{1}{2}\int_0^t  \int_{\mathbb{T}^d}  q (q - 1) | \delta \rho_s
(x) |^{q - 2}  | \sigma^{\tau}_s \, \nabla \delta \rho_s (x) |^2  \, dx\, ds\\
&\quad - \int_0^t\int_{\mathbb{T}^d}  q |  \delta \rho_s (x) |^{q - 2} \delta \rho_s \, \nabla
\cdot\left[ [ \rho^{\bar{\xi}}_s (x) K\ast \bar{\xi}_s(x)] - [ \rho^{\xi}_s (x) K\ast {\xi}_s(x)] \right] \, dx\, ds\\
&\quad - \int_0^t\int_{\mathbb{T}^d}  q | \delta \rho_s (x) |^{q - 2} \delta \rho_s (x) \nabla
\delta \rho_s (x) \cdot \sigma_s\, d  x \, d  B_s \\
&\quad + \frac{1}{2}  \int_0^t\int_{\mathbb{T}^d}  q (q - 1) | \delta \rho_s
(x) |^{q - 2}  | \sigma^{\tau}_s \, \nabla \delta \rho_s (x) |^2  \, dx \, ds
\end{align*}
and then
\begin{align*}
\| \delta \rho_t \|_{q}^q  
&\leq -\frac{1}{2}   \int_0^t \int_{\mathbb{T}^d} q (q - 1) | \delta \rho_s
(x) |^{q - 2}  |  \, \nabla \delta \rho_s (x) |^2\, dx \, ds \\
& - \int_0^t \int_{\mathbb{T}^d} q |  \delta \rho_s (x) |^{q - 2} \delta \rho_s \, \nabla
\cdot\left[ [ \rho^{\bar{\xi}}_s (x) K\ast \bar{\xi}_s(x)] - [ \rho^{\xi}_s (x) K\ast {\xi}_s(x)] \right]\, dx\, ds  \\
&\doteq - I_t - II_t
\end{align*}

\noindent where in the last inequality, we are using  integration by parts and $\nabla\cdot K =0$. 
Now we subtract and add $\rho_s^{\xi}(x)$ to get
\begin{align}
- II_t &= - \int_0^t\int_{\mathbb{T}^d} q |  \delta \rho_s  |^{q - 2} \delta \rho_s \nabla 
( \delta \rho_s)  K\ast \bar{\xi}_s\, dx \, ds \nonumber \\
&\quad - \int_0^t \int_{\mathbb{T}^d}  q |  \delta \rho_s|^{q - 2} \delta \rho_s \nabla \cdot
(\rho^{\xi}_s\,K \ast  \bar{\xi}_s )\, dx \, ds \nonumber \\
&\quad +  \int_0^t \int_{\mathbb{T}^d} q |  \delta \rho_s|^{q - 2} \delta \rho_s \nabla
\cdot ( \rho^{\xi}_s  K\ast \xi_s )  \, dx \, ds\nonumber \\
&\quad \overset{\nabla\cdot K =0}{ =}-  \int_0^t \int_{\mathbb{T}^d}  q |  \delta \rho_s|^{q - 2} \delta \rho_s \nabla \cdot
(\rho^{\xi}_s\,K \ast  \bar{\xi}_s )\, dx \, ds \nonumber \\
&\quad +  \int_0^t \int_{\mathbb{T}^d} q |  \delta \rho_s|^{q - 2} \delta \rho_s \nabla
\cdot ( \rho^{\xi}_s  K\ast \xi_s )  \, dx\, ds. \nonumber 
\end{align}
So integration by parts and Assumption $(\mathbf{A}^K)$,
\begin{align}
|II_t| &\le q(q-1) \int_0^t  \int_{\mathbb{T}^d} |\delta \rho_s  |^{q - 2}  |\nabla \delta \rho_s| \,|\rho_s^\xi|\, \,|K \ast \delta \xi_s|  \, dx \, ds \nonumber \\
&\le q(q-1) \int_0^t  \int_{\mathbb{T}^d} |\delta \rho_s  |^{q - 2}  |\nabla \delta \rho_s| \,|\rho_s^\xi|\, \|\delta \xi_s\|_{q}  \, dx \, ds.
\nonumber
\end{align}
Now by $\epsilon$-Young inequality with conjugate exponents $2$ and $2$, and then with $\frac{q}{q-2}$ and $\frac{q}{2}$, we arrive at,
\begin{align}
II_t&\le \epsilon q(q-1) \int_0^t  \int_{\mathbb{T}^d} |\delta \rho_s  |^{q - 2}  |\nabla \delta \rho_s|^2  \, dx \, ds
\nonumber \\
&+ C_{\epsilon} q(q-1) \int_0^t  \|\delta \rho_s  \|_q^{q}   \, ds
\nonumber \\
&+ C_{\epsilon} q(q-1) \int_0^t    \|\rho_s^\xi\|_q^q\, \|\delta \xi_s\|_{q}^{q}  \, ds
\nonumber\\
&\le C_\epsilon q(q-1)\int_0^t \|\delta \rho_s\|_{q}^q \, ds + \epsilon \, q (q-1) \int_0^t \int_{\mathbb{T}^d} |\delta \rho_s|^{q-2}|\nabla \delta \rho_s|^2 \, dx \, ds \nonumber\\
& \quad + C_\epsilon q(q-1) \,  \, \Big\|\|\rho^{\xi}\|_{T,q}\Big\|_{\infty}^q  \int_0^t   \|\delta \xi_s\|^q_{q} \, ds.
\end{align}

\medskip

Finally, choosing $\epsilon$ small, we have
\begin{align*}
\| \delta \rho_t \|_{q}^q
&\le C \int_0^t \|\delta \rho_s\|_{q}^{q}\, ds + C \, \|\rho_0\|_q^q \int_0^t \|\delta \xi_s\|^q_{q} \, ds. 
\end{align*}

Thus, an application of Gr\"onwall's Lemma yields

\[ \Big\|\|\delta \rho\|_{T,q}\Big\|_{\infty} \le  e^{CT}  C \|\rho_0\|_q T^\frac{1}{q}\Big\|\|\delta \xi\|_{T,q}\Big\|_{\infty}  
\] 
and taking $T$ small, we find that $\mathcal{T}$ is a contraction. 
\end{proof}
\subsection*{Proof of Corollary \ref{coro Teo_Krylov}}

 The proof relies on estimates for the nonlinear term to verify condition 5.6 of Theorem 5.1 in \cite{Krylov},  along with bootstrapping arguments from \cite{Gui}.
   
\begin{proof}[Proof of Corollary \ref{coro Teo_Krylov}]
      
   By invoking the convolution inequality and the Assumption \(\nabla\cdot K = 0\), together with \(\lambda^{-1} \leq \rho \leq \lambda\), $\mathbb{P}$-a.s., we deduce that, for all $u_t,v_t \in H^2_q$, $t \geq 0$,
    \begin{align*}
        \|\nabla \cdot\left(( K\ast \rho_t)(u_t - v_t) \right)\|_{0,q}&\leq \|K\ast \rho_t\|_{\infty}\|\nabla (u_t - v_t)\|_q\\
        &\leq \|K\|_1^q \lambda \|\nabla (u_t - v_t)\|_q\\
        & \le C\left\|(u_t - v_t)\right\|_{2,q}^{\frac{1}{2}}\left\|(u_t - v_t)\right\|_{0,q}^{\frac{1}{2}}\\ 
& \le C\epsilon\left\|(u_t - v_t)\right\|_{2,q} + C \epsilon^{-1}\left\|(u_t - v_t)\right\|_{0,q},
    \end{align*}
    by interpolation inequality.
   So since that $\rho_0 \in L^1 \cap H^2_q(\mathbb{T}^d)$,  we have by Theorem 5.1 in \cite{Krylov} and Theorem \ref{Teo_Krylov},  that  \begin{align} \label{H2 regularity}
        \rho \in L_{\mathcal{F}^B}^q\left([0,T];H^{2}_q\left(\mathbb{T}^d\right)\right) \cap S_{\mathcal{F}^B}^q\left([0,T];H^{1}_q\left(\mathbb{T}^d\right)\right) \cap \mathbb{B}.
    \end{align}
    Now by the same computations like in \cite{Gui}, Lemma 1, for all $u_t,v_t \in H^3_q$
    \begin{align}
        \partial_j\left(K\ast \rho_t \nabla (u_t - v_t) \right) = \sum_{i=1}^d \left(\partial_jK^i\ast \rho_t\right)\partial_i(u_t - v_t) + \sum_{i=1}^d \left(K^i\ast\rho_t\right)\partial_{ji} (u_t - v_t), \label{ lebris a}
    \end{align}
    and, since $K=\nabla\cdot K_0$, by definition of convolution,
\begin{align*}
    \left(\partial_jK^i\ast \rho_t \right)(x)&=  \int_{\mathbb{T}^d} \partial_jK^i(x-y)\rho_t(y) \, dy\\    
    &= \int_{\mathbb{T}^d} \partial_jK^i(y)\rho_t(x-y) \, dy\\
     &= -\int_{\mathbb{T}^d} K^i(y)\partial_j\rho_t(x-y) \, dy\\
     &= \sum_{l=1}^d\int_{\mathbb{T}^d} K_0^{il}(y)\partial_{ji}\rho_t(x-y) \, dy\\
      &= \sum_{l=1}^d\left(K_0^{il}\ast \partial_{ji} \rho_t\right)(x). \\
\end{align*}
It follows that
\begin{align*}
   \sum_{i=1}^d \left(\partial_jK^i\ast \rho_t\right)\partial_i (u_t - v_t) &=  \sum_{i=1}^d \left(\sum_{l=1}^d\left(K_0^{il}\ast \partial_{ji} \rho_t \right)\right)\partial_i (u_t - v_t)\\
   &=\left(K_0\ast\partial_j\nabla \rho_t\right) \nabla (u_t - v_t).
\end{align*}
Analogously,
\begin{align*}
    \sum_{i=1}^d \left(K^i\ast\rho_t\right)\partial_{ji} (u_t - v_t) = \left(K\ast \rho_t\right) \partial_j\nabla (u_t - v_t).
\end{align*}
Thus, by these equalities in $(\ref{ lebris a})$,
\begin{align}
    \|\partial_j\left( \left(K\ast \rho_t \right) \right)\nabla (u_t - v_t)\|_{q} &\leq  \|K_0\|_{\infty}\|\partial_j \nabla \rho_t\|_1 \|\nabla (u_t - v_t)\|_{q}\nonumber\\
    &+  \|K\|_{1}\| \rho_t\|_{\infty} \|\partial_j \nabla (u_t - v_t)\|_{q} \nonumber \\
    &\leq \left[\|K_0\|_{\infty}\left\|\partial_j\nabla \rho_t\right\|_2+\|K\|_{1}\lambda\right] \left\| (u_t - v_t)\right\|_{2,q}.  \label{boot 1}
\end{align}

\vspace{.3cm}
 
\noindent Now we will obtain a uniform estimate for $\left\|\partial_j\nabla \rho_t\right\|_2$. First take $q=2$ in $(\ref{q ito})$ to get
  \begin{align}
&\| \rho_t \|_{2}^2 \leq \|\rho_0\|_{2}^2 -   \int_0^t \int_{\mathbb{T}^d}     \, |\nabla \rho_s (x) |^2 \, dx \, ds \nonumber
\end{align}
and then
 \begin{align}
&\int_0^t \| \nabla \rho_s  \|_2^2 \, ds \leq \|\rho_0\|_{2}^2 - \| \rho_t \|_{2}^2 \leq \lambda^2.   \label{grad uniform estimate}
\end{align}

Given a multi-index \(\alpha_1\), by differentiating equation \((\ref{SPDE_Ito})\) and  applying the It{\^o}'s formula for the $L^2$-norm of a $H^1_2$-valued process in \cite{kry}, we obtain
\begin{align}
&\| \partial^{\alpha_1} \rho_t \|_{2}^2 = \|\partial^{\alpha_1}\rho_0\|_{2}^2 \nonumber\\&\quad-\int_0^t \sum_{\alpha_2}\int_{\mathbb{T}^d} | \  \partial^{\alpha_1,\alpha_2}\rho_s (x) |^2 \, dx \, ds
\nonumber\\&\quad-\int_0^t\int_{\mathbb{T}^d}  |\sigma^{\top}_s \, \nabla (  \partial^{\alpha_1}\rho_s (x) )|^2  \, dx\, ds \nonumber\\
&\quad - 2\int_0^t\int_{\mathbb{T}^d}    \partial^{\alpha_1}\rho_s (x)\,  \partial^{\alpha_1}\left(\nabla\rho_s (x)  \left(K\ast \rho_s(x) \right) \right) \, dx\, ds\nonumber\\
&\quad - 2\int_0^t \int_{\mathbb{T}^d}   \partial^{\alpha_1}\rho_s (x) \nabla
 (\partial^{\alpha_1}\rho_s (x) )\cdot \sigma_s \, d  x\, d  B_s \nonumber\\
&\quad +  \int_0^t \int_{\mathbb{T}^d}  |\sigma^{\top}_s \, \nabla (  \partial^{\alpha_1}\rho_s (x) )|^2  \, dx \, ds\nonumber\\
&\leq \|\partial^{\alpha_1}\rho_0\|_{2}^2 \nonumber\\
&+2 \int_0^t   \int_{\mathbb{T}^d}\partial^{\alpha_1,\alpha_1}\rho_s (x)\,  \left(\nabla\rho_s (x)  \left(K\ast \rho_s(x) \right) \right)\, dx \, ds\nonumber\\
&-  \int_0^t \sum_{\alpha_2}\int_{\mathbb{T}^d} | \  \partial^{\alpha_1,\alpha_2}\rho_s (x) |^2 \, dx \, ds. \label{q 2 ito derivative}
\end{align}
Using Holder's and Young inequalities,
\begin{align*}      \int_{\mathbb{T}^d}\partial^{\alpha_1,\alpha_1}\rho_s (x)\,  \left(\nabla\rho_s (x)  \left(K\ast \rho_s \right) \right)\, dx & \overset{\text{Holder}}{\leq}\|\partial^{\alpha_1,\alpha_1}\rho_s\|_2\|\left(\nabla\rho_s \left(K\ast \rho_s \right) \right)\|_2\\
&\overset{\text{Young}}{\leq}\frac{1}{4} \sum_{\alpha_2}\|\partial^{\alpha_1,\alpha_2}\rho_s\|_2^2\\
&+ \|\left(\nabla\rho_s \left(K\ast \rho_s \right) \right)\|_2^2
\end{align*}
and by convolution inequality,
\begin{align*}
    \|\left(\nabla\rho_s \left(K\ast \rho_s \right) \right)\|_2^2 \leq  \|\nabla\rho_s \|_2^2 \|K\ast \rho_s \|_{\infty}^2 \leq \|\nabla\rho_s \|_2^2 \|K \|_{1}^2 \lambda^2.
\end{align*}
So $(\ref{q 2 ito derivative})$ gives
\begin{align}
\| \partial^{\alpha_1} \rho_t \|_{2}^2  +  \frac{1}{2}\int_0^t \sum_{\alpha_2}\int_{\mathbb{T}^d} | \  \partial^{\alpha_1,\alpha_2}\rho_s (x) |^2 \, dx \, ds
&\leq \|\partial^{\alpha_1}\rho_0\|_{2}^2 \nonumber\\
&+ 2\int_0^t \|\nabla\rho_s \|_2^2 \|K \|_{1}^2 \lambda^2 \, ds \nonumber\\
&\overset{(\ref{grad uniform estimate})}{\leq} \|\partial^{\alpha_1}\rho_0\|_{2}^2 \nonumber\\
&+ 2\lambda^2 \|K \|_{1}^2 \lambda^2. \label{grad 2 uniform estimate}
\end{align}

Now observe that, through similar computations to those that led to (\ref{boot 1}), 
\begin{align}
    \|\partial^{\alpha_1}\left(\nabla\rho_s \left(K\ast \rho_s \right) \right)\|_2^2 &\leq 2 \|K_0\|_{\infty}^2\|\partial^{\alpha_1} \nabla \rho_s\|_1^2 \|\nabla \rho_s\|_2^2+ 2 \|K\|_{1}^2\| \rho_s\|_{\infty}^2 \|\partial^{\alpha_1}\nabla \rho_s\|_2^2. \label{lebris}
\end{align}
Take $(\zeta_{\eta})_{\eta\geq0}$ such that $\|\zeta_{\eta}\|_1=1$, whose compact support are assumed to be strictly
contained within $\left[-\frac{1}{2},\frac{1}{2}\right]^{d}$ and define $f = \nabla \rho(K\ast\rho)$. Note that by Definition \ref{SPDE_Ito}, $\mathbb{P}$-a.s., for all $t \in [0,T]$,
\begin{align}
    \rho^{\eta}_t(x) &= \rho_0^{\eta}(x) - \int_0^t f_s^{\eta}(x) \, ds + \frac{1}{2}\int_0^t D^2 \rho_s^{\eta}(x) (\sigma \sigma^{\top})_s \, ds + \frac{1}{2}\int_0^t \Delta \rho_s^{\eta}(x) \, ds\nonumber\\
    &-\int_0^t \sigma^{\top}_s\nabla \rho^{\eta}_s(x) \, d B_s. \label{mollification}
\end{align}
Given the multi-indices \(\alpha_1\) and \(\alpha_2\), by differentiating equation \((\ref{mollification})\)  and applying Itô's formula, we arrive at
\begin{align}
&\| \partial^{\alpha_1,\alpha_2} \rho_t^{\eta} \|_{2}^2 = \|\partial^{\alpha_1,\alpha_2}\rho_0^{\eta}\|_{2}^2 \nonumber\\&\quad-   \int_0^t \sum_{\alpha_3}\int_{\mathbb{T}^d} | \  \partial^{\alpha_1,\alpha_2,\alpha_3}\rho_s^{\eta} (x) |^2 \, dx \, ds
\nonumber\\&\quad-\int_0^t\int_{\mathbb{T}^d}  |\sigma^{\top}_s \, \nabla (  \partial^{\alpha_1,\alpha_2}\rho_s^{\eta} (x) )|^2  \, dx\, ds\nonumber\\
&\quad - 2\int_0^t\int_{\mathbb{T}^d}    \partial^{\alpha_1,\alpha_2}\rho_s (x)\,  \partial^{\alpha_1,\alpha_2}\left(f_s^{\eta}(x) \right) \, dx\, ds\nonumber\\
&\quad - 2\int_0^t\int_{\mathbb{T}^d}    \partial^{\alpha_1,\alpha_2}\rho_s^{\eta} (x) \nabla
 (\partial^{\alpha_1,\alpha_2}\rho_s^{\eta} (x) ) \sigma_s^{\top}\, d  x \, d  B_s \nonumber\\
&\quad +   \int_0^t\int_{\mathbb{T}^d}  |\sigma^{\top}_s \, \nabla (  \partial^{\alpha_1,\alpha_2}\rho_s^{\eta} (x) )|^2\, dx \, ds,  \nonumber
\end{align}
which implies that
\begin{align}
\| \partial^{\alpha_1,\alpha_2} \rho_t^{\eta} \|_{2}^2 
&\leq \|\partial^{\alpha_1,\alpha_2}\rho_0^{\eta}\|_{2}^2 \nonumber\\
&+2 \int_0^t   \int_{\mathbb{T}^d}\partial^{\alpha_1,\alpha_2,\alpha_2}\rho_s^{\eta} (x)\,  \partial^{\alpha_1}\left(f_s^{\eta}(x) \right)\, dx \, ds\nonumber\\
&-   \int_0^t \sum_{\alpha_3}\int_{\mathbb{T}^d} | \  \partial^{\alpha_1,\alpha_2,\alpha_3}\rho_s^{\eta} (x) |^2 \, dx \, ds \nonumber\\
&\overset{\text{Young}}{\leq} \|\partial^{\alpha_1,\alpha_2}\rho_0^{\eta}\|_{2}^2 \nonumber\\
&+2 \int_0^t   \int_{\mathbb{T}^d}\left|\partial^{\alpha_1}\left(f_s^{\eta}(x) \right)\right|^2\, dx \, ds\nonumber\\
&-   \frac{1}{2}\int_0^t \sum_{\alpha_3}\int_{\mathbb{T}^d} | \  \partial^{\alpha_1,\alpha_2,\alpha_3}\rho_s^{\eta} (x) |^2 \, dx \, ds. \nonumber
\end{align}
It follows that, since $\|\zeta_{\eta}\ast h\|_2 \leq \|h\|_2$, by $(\ref{lebris})$ 
\begin{align*}
\| \partial^{\alpha_1,\alpha_2} \rho_t^{\eta} \|_{2}^2 
&\leq \|\partial^{\alpha_1,\alpha_2}\rho_0\|_{2}^2 \nonumber\\
&+\int_0^t    4\|K_0\|_{\infty}^2\|\partial^{\alpha_1} \nabla \rho_s\|_1^2 \|\nabla \rho_s\|_2^2+  4\|K\|_{1}^2\| \rho_s\|_{\infty}^2 \|\partial^{\alpha_1}\nabla \rho_s\|_2^2 \, ds\nonumber\\
&-   \frac{1}{2}\int_0^t \sum_{\alpha_3}\int_{\mathbb{T}^d} | \  \partial^{\alpha_1,\alpha_2,\alpha_3}\rho_s^{\eta} (x) |^2 \, dx \, ds.\nonumber
\end{align*}
Thus from $(\ref{grad 2 uniform estimate})$, we get 
\begin{align*}
\| \partial^{\alpha_1,\alpha_2} \rho_t^{\eta} \|_{2}^2 + \frac{1}{2}\int_0^t \sum_{\alpha_3}\int_{\mathbb{T}^d} | \  \partial^{\alpha_1,\alpha_2,\alpha_3}\rho_s^{\eta} (x) |^2 \, dx \, ds 
&\leq \|\partial^{\alpha_1,\alpha_2}\rho_0\|_{2}^2 + C_{K,K_0,\Lambda,\rho_0},
\end{align*}
and since that $ \zeta_{\eta}\ast h \to h$ in $L^p$ and $\rho_0 \in L^1 \cap H^2_q(\mathbb{T}^d)$,
\begin{align}
    \| \partial^{\alpha_1,\alpha_2} \rho_t \|_{2}^2 \leq C_{K,K_0,\Lambda,\rho_0}. \label{bound second derivative rho}
\end{align}
   Finally $(\ref{bound second derivative rho})$ and interpolation inequality in 
 (\ref{boot 1}) leads to
   \begin{align}
    \|\partial_j\left( \left(K\ast \rho_t \right) \right)\nabla (u_t - v_t)\|_{q} &\leq \left[\|K_0\|_{\infty}C+\|K\|_{1}\lambda\right] \left\| (u_t - v_t)\right\|_{2,q}   \nonumber\\
      & \le C\left\|(u_t - v_t)\right\|_{3,q}^{\frac{1}{2}}\left\|(u_t - v_t)\right\|_{1,q}^{\frac{1}{2}}\nonumber\\ 
& \le C\epsilon\left\|(u_t - v_t)\right\|_{3,q} + C \epsilon^{-1}\left\|(u_t - v_t)\right\|_{1,q} \nonumber
\end{align}
   and applying   Theorem 5.1 in \cite{Krylov} and Theorem \ref{Teo_Krylov},  we have
    \begin{align} 
         \rho \in L_{\mathcal{F}^B}^{q}\left([0,T];H^{3}_{q}\left(\mathbb{T}^d\right)\right) \cap S_{\mathcal{F}^B}^{q}\left([0,T];H^{2}_{q}\left(\mathbb{T}^d\right)\right) \cap \mathbb{B}. \nonumber
    \end{align}
    
\vspace{.3cm}

\noindent Using the same calculations as above, for all $u_t,v_t \in H^4_q$,
\begin{align}
    \|\partial_{jk}\left( \left(K\ast \rho_t \right) \nabla(u_t- v_t)\right)\|_{q} &\leq  \|K_0\|_{\infty}\|\partial_{jk}\nabla \rho_t\|_2 \|\nabla (u_t - v_t)\|_{q}\nonumber\\
    &+  \|K\|_{1}\| \rho_t\|_{\infty} \|\partial_{jk} \nabla (u_t - v_t)\|_{q} \nonumber \\
     &\leq \left[\|K_0\|_{\infty}C+\|K\|_{1}\lambda\right] \left\| (u_t - v_t)\right\|_{3,q} \nonumber\\
      & \le C\left\|(u_t - v_t)\right\|_{4,q}^{\frac{1}{2}}\left\|(u_t - v_t)\right\|_{2,q}^{\frac{1}{2}}\nonumber\\ 
& \le C\epsilon\left\|(u_t - v_t)\right\|_{4,q} + C \epsilon^{-1}\left\|(u_t - v_t)\right\|_{2,q}, \nonumber
    \end{align}
by interpolation inequality.   Since $\rho_0 \in L^1 \cap H^3_q\left(\mathbb{T}^d\right)$,  we have by Theorem 5.1 in \cite{Krylov} and Theorem \ref{Teo_Krylov}, that  
    \begin{align} \label{H4 regularity}
         \rho \in L_{\mathcal{F}^B}^{q}\left([0,T];H^{4}_{q}\left(\mathbb{T}^d\right)\right) \cap S_{\mathcal{F}^B}^{q}\left([0,T];H^{3}_{q}\left(\mathbb{T}^d\right)\right) \cap \mathbb{B}. \nonumber
    \end{align}
      \end{proof}
  \medskip

\section*{Apendix C : Inequalities} \label{ineq}

We begin by establishing an inequality that relates the relative entropy functional and the Fisher information (see \cite{pedentropy}, Theorem $A.2$).
\begin{lem} \label{L1entropyfisherinformation inequality}
    It holds for $f,g \in \mathcal{P}\left(\mathbb{T}^d\right)$, $f,g > 0$, $g \in (\lambda^{-1},\lambda)$, $\lambda >1$,
    \begin{align*}
     \|f - g\|_1^2\lesssim  \mathcal{H}(f|g) \lesssim \int_{\mathbb{T}^d} f \left|\nabla \ln \left(\frac{f}{g}\right)\right|^2 \,dx = \int_{\mathbb{T}^d} \frac{g^2}{f}\left|\nabla \left(\frac{f}{g}\right)\right|^2 \,dx \doteq \mathcal{I}(f|g).
    \end{align*}
\end{lem}


	The next result is an application of the classical Borel-Cantelli's Lemma (Lemma 2.1 in \cite{Kloeden Neuenkirch}).  
\begin{lem} \label{borel cantelli}
Let $\rho > 0$ and $C(m) \in [0,\infty)$ for $m \geq1$. In addition, let $(Z^N)_{N \in \mathbb{N}}$, be a sequence of random variables such that
\begin{align*}
\left(\mathbb{E}\left|Z^N\right|^m\right)^{\frac{1}{m}} \leq C(m) N^{-\rho},
\end{align*}
for all $m \geq1$ and $N \in \mathbb{N}$. Then for all $\delta >0$, there exists a random variable $A_{\delta}$ such that almost surely
\begin{align*}
\left|Z^N\right| \leq A_{\delta}N^{-\rho + \delta}.
\end{align*}
Moreover, 
\begin{align*}
\mathbb{E}\left|A_{\delta}\right|^m < \infty,
\end{align*}
for all $m \geq1$.
\end{lem}	
	
\section*{Acknowledgements}
C. Olivera  is partially supported by  FAPESP-ANR by the  grant  $2022/03379-0$ ,  by FAPESP by the grant  $2020/04426-6$, and  CNPq by the grant $422145/2023-8$. A. B. de Souza is partially supported by  Coordenação de Aperfeiçoamento de Pessoal de Nível Superior – Brasil (CAPES) – Finance Code $001$.

\section*{Statement}
The authors declare that  have not conflicts of interest.\\
The authors made the same contributions to the manuscript.

\end{document}